\DeclareMathOperator{\cov}{cov}
\DeclareMathOperator{\var}{var}
\DeclareMathOperator{\tr}{tr}
\newcommand{\mat}{\textsc{\small mat}}
\renewcommand{\vec}{\textsc{\small vec}}
\newcommand{\pair}{\textsc{\small pair}}
\newcommand{\otK}{\otimes_{\textsc{k}}}
\newcommand{\cL}{\mathcal{L}}  
\newcommand{\B}{\mathbb{B}} 
\newcommand{\E}{\mathbb{E}} 
\newcommand{\eps}{{\mathcal{E}}}   
\renewcommand{\P}{\mathbb{P}}  
\newcommand{\N}{\mathbb{N}}  
\newcommand{\cN}{\mathcal{N}}  
\newcommand{\R}{\mathbb{R}}  
\renewcommand{\S}{\mathbb{S}}  
\newcommand{\U}{\mathbb{U}} 
\newcommand{\V}{\mathbb{V}}  
\newcommand{\x}{\mathbf{x}} 
\newcommand{\X}{\mathbf{X}}  
\newcommand{\Z}{\mathbf{Z}}
\newcommand{\cT}{\mathcal{T}} 
\newcommand{\sF}{\mathsf{F}} 
\newcommand{\sS}{\mathsf{S}} 
\newcommand{\sT}{\mathsf{T}} 
\newcommand{\sM}{\mathsf{M}} 
\newcommand{\cM}{\mathcal{M}} 
\newcommand{\sV}{\mathsf{V}} 
\newcommand{\<}{\langle}
\renewcommand{\>}{\rangle}
\newcommand{\s}[1]{\textsc{\tiny #1}} 
\newcommand{\ST}{\mathsf{S}}
\newcommand{\I}{\mathbb{I}}
\newcommand{\bs}{\boldsymbol}
\newtheorem{thm}{Theorem}[section]
\newtheorem{lem}[thm]{Lemma}
\newtheorem{prop}[thm]{Proposition}
\newtheorem{cor}[thm]{Corollary}
\theoremstyle{definition}
\newtheorem{defn}[thm]{Definition}
\newtheorem{ex}[thm]{Example}
\newtheorem{assp}{Assumption}
\theoremstyle{remark}
\newtheorem{rem}[thm]{Remark}
\newcommand{\piotr}[1]{{\small{\color{BrickRed} \sf $\clubsuit$ Piotr: #1}}}
\title{Probabilistic PCA on tensors}
\author{Yaoming Zhen\thanks{School of Data Science, The Chinese University of Hong Kong, Shenzhen \texttt{yaomingzhen@cuhk.edu.cn}}
\and
Piotr Zwiernik\thanks{Department of Economics and Business, Universitat Pompeu Fabra, and Barcelona School of Economics \texttt{piotr.zwiernik@upf.edu}}
}
\begin{document}

\maketitle
\begin{abstract}
In probabilistic principal component analysis (PPCA), an observed vector is modeled as a linear transformation of a low-dimensional Gaussian factor plus isotropic noise. We generalize PPCA to tensors by constraining the loading operator to have Tucker structure, yielding a probabilistic multilinear PCA model that enables uncertainty quantification and naturally accommodates multiple, possibly heterogeneous, tensor observations. We develop the associated theory: we establish identifiability of the loadings and noise variance and show that—unlike in matrix PPCA—the maximum likelihood estimator (MLE) exists even from a single tensor sample. We then study two estimators. First, we consider the MLE and propose an expectation–maximization (EM) algorithm to compute it. Second, exploiting that Tucker maps correspond to rank-one elements after a Kronecker lifting, we design a computationally efficient estimator for which we provide provable finite-sample guarantees. Together, these results provide a coherent probabilistic framework and practical algorithms for learning from tensor-valued data.
\end{abstract}

\begin{bibunit}[apalike]

\section{Introduction}

Tensors are ubiquitous in contemporary data science, arising in fields as diverse as computer vision \citep{panagakis2021tensor}, recommender systems \citep{bi2018multilayer, zhang2021dynamic}, neuroscience \citep{zhou2013tensor, zhou2023partially}, genomics \citep{jing2021community}, and chemometrics \citep{cichocki2015tensor}.  By keeping multi‐way structure intact, tensor methods can reveal interactions that are invisible to vector- or matrix-based approaches, yet the resulting high dimensionality demands careful statistical modeling.

\medskip
\noindent\textbf{From PCA to probabilistic tensor models.}
Classical Principal Component Analysis (PCA) \citep{pearson1901liii,hotelling1933analysis,greenacre2022principal} is one of the most widely used techniques for linear dimension reduction. Probabilistic PCA (PPCA; \citealp{tipping1999probabilistic}) recasts PCA as a latent-variable model, enabling likelihood-based inference, uncertainty quantification, principled handling of missing data, and natural extensions to hierarchical Bayesian modeling. PPCA, however, is fundamentally matrix-centric; naively vectorizing a tensor destroys multilinear structure and inflates the number of parameters. Our goal is to endow the widely used Tucker decomposition \citep{tucker1966some} with a fully probabilistic counterpart. 

We introduce \emph{Tensor Probabilistic PCA} (TPCA), a generative latent-variable model that factorizes an order-$r$ tensor $X\in\R^{n_1\times\cdots\times n_r}$ as 
\[
X \;=\; (A_1,\ldots,A_r)\cdot Z \;+\; \eps, 
\quad 
Z_{i_1,\ldots,i_r}\stackrel{\text{i.i.d.}}{\sim} N\bigl(0,1\bigr),
\quad
\eps_{i_1,\ldots,i_r}\stackrel{\text{i.i.d.}}{\sim} N\!\bigl(0,\sigma^{2}\bigr),
\]
where each factor matrix $A_k\in\R^{n_k\times m_k}$ has $m_k\ll n_k$ columns and ``$\cdot$'' denotes the standard Tucker product (formal definition in Section~\ref{sec:tensors}). Setting  $r=1$ reduces TPCA to PPCA.  With $r\ge2$, TPCA becomes a \emph{probabilistic} version of multilinear PCA (MPCA; \citealp{lu2008mpca}) and Tucker-PPCA \citep{chu2009ptucker}, providing an integrated framework for inference, hypothesis testing and model selection. For simplicity of exposition we assume throughout this section that $X$ has mean zero. With repeated samples, a general mean $\mu$ can always be estimated separately and subtracted, while in small-sample regimes TPCA also accommodates structured means of the form $(A_1,\ldots,A_r)\cdot\nu$ (see Section~\ref{sec:tpca}).

\medskip
\noindent\textbf{Related work.}
Deterministic tensor decompositions are well established—CP \citep{carroll1970analysis}, Tucker \citep{tucker1966some}, tensor-train \citep{oseledets2011tensor}, CUR \citep{JMLR:v22:21-0287}, and their orthogonally constrained variants—but lack a likelihood.  
Early probabilistic attempts include Bayesian Tucker and CP models with MCMC or variational Bayes \citep{zhao2015bayesian,lyu2019tensor,mai2022doubly}; structured matrix/tensor normal models \citep{matsuda2022estimation,hoff2023core} share a Gaussian likelihood yet treat Kronecker covariances rather than low-rank factors.  
Our work differs in that (i) the latent ``core'' $Z$ is \emph{isotropic} allowing for sharp identifiability results; (ii) the factor loading matrices are the only free parameters apart from $\sigma^{2}$; and (iii) TPCA nests many popular models, including tensor-normal models, multilinear subspace models, CP factor models and Spiked tensor models under a single probabilistic umbrella.

On the algorithmic side, EM algorithms for Gaussian factor models are classical \citep{rubin1982em}; tensor-specific EM variants appear in \citet{hannani2019EMtensor} and, more recently, in high-dimensional settings \citep{wu2024sharp,chattopadhyay2024blessing}. Spectral and power-iteration approaches have also been explored for tensor PCA \citep{han2022tensor,wang2023exponentiated}, and distributed or federated implementations are emerging \citep{chen2025distributed}. We build on these ideas in two complementary ways: we develop an \emph{exact-likelihood} EM algorithm tailored to Tucker-structured PPCA for computing the MLE, and we design a separate, computationally efficient estimator that exploits the Kronecker structure. The latter comes with finite-sample performance guarantees and markedly lower computational cost, while the EM algorithm targets the MLE and enjoys standard asymptotic properties. Together, these methods bridge likelihood-based modeling and scalable learning for tensor-valued data. 
\medskip

\noindent\textbf{Why TPCA in practice?}
We list four concrete benefits here that TPCA offers.  
First, it handles \emph{repeated observations}: unlike most tensor decompositions that work with a single array, TPCA is defined for $N\!\ge 1$ independent samples, enabling population inference and prediction.  
Second, it copes gracefully with \emph{missing data}: because the model is Gaussian, the EM algorithm replaces every unobserved entry by its exact conditional mean given the data that are recorded, so TPCA can be fit no matter which entries are absent—entire slices, scattered pixels, or anything in between.  
Third, TPCA comes with built-in \emph{uncertainty quantification}: posterior (or observed-Fisher) covariances for the factor loadings measure sampling variability that deterministic methods ignore.  
Finally, the likelihood enables principled \emph{model selection and completion}: marginal likelihoods lead to AIC, BIC, and Bayes factors for choosing multilinear ranks $(m_1,\dots,m_r)$, and the same framework supports low-rank tensor completion \citep{kressner2014low}. Such situations arise naturally in practice: fMRI studies involve repeated scans across many patients with missing or corrupted voxels; recommender systems generate sparse user–item–time tensors; and in chemometrics or genomics, experiments yield multiway measurements whose uncertainty must be quantified.

\medskip
\noindent\textbf{Main contributions.}
Our contributions are threefold. 
\emph{First}, we introduce TPCA --- a fully probabilistic analogue of the Tucker decomposition --- and show that it nests PPCA ($r=1$), multilinear PCA, tensor-normal models, and symmetric variants under mild constraints. 
\emph{Second}, we provide subtle algorithms tailored to the model: (i) an exact-likelihood EM algorithm that computes the MLE and whose E-step exploits Kronecker structure for $O\!\bigl(\sum_k n_k^3\bigr)$ cost per iteration; and (ii) a scalable rank-1, power-iteration–type estimator that leverages the Tucker-to–rank-one correspondence, together with finite-sample performance guarantees and convergence analysis. 
\emph{Third}, we develop a statistical theory for the model: TPCA is identifiable up to the usual orthogonal and scaling indeterminacies, and its maximum-likelihood estimator (MLE) exists even for a single observation ($N=1$). We also derive information-theoretic lower bounds and non-asymptotic error rates for a computationally efficient rank-1 power iteration estimator. Particularly, the rank-1 power iteration estimator would be consistent even after the first iteration when almost arbitrary initialization is given.  

\medskip
\noindent\textbf{Notation.} Throughout we adopt the concise language of multilinear algebra: tensors are elements of a finite Cartesian product of Euclidean spaces; mode-$k$ multiplication and the Tucker product are defined via linear maps, letting us avoid cumbersome index gymnastics. The details are provided in the next section.

\medskip
\noindent The remainder of the paper is organized as follows.  Section~\ref{sec:tensors} fixes tensor notations and recalls basic definitions.  Section~\ref{sec:tpca} introduces TPCA and its basic properties.  Sections~\ref{sec:EM} and~\ref{sec:algorithm} develop the MLE and power-iteration estimators together with some numerical and theoretical results.  Numerical experiments in Section~\ref{sec:experiments} complement the theory, and Section~\ref{sec:conclusion} concludes the paper.

\section{Tensor spaces and notation}\label{sec:tensors}

The proofs of this section are rather straightforward and standard, and they have been moved to Appendix~\ref{app:tensors_proofs} to streamline the discussion. 

\subsection{The inner product space of tensors}

For any $n\in\N=\{1,2,\ldots\}$, let $[n]:=\{1,\ldots,n\}$.  
An $r$-way array of real numbers with dimensions $(n_1,\ldots,n_r)$ is written as $T=(T_{i_1,\ldots,i_r})$ with $i_k\in[n_k]$.  
The set of such arrays is denoted by
\[
  \V:=\R^{n_1\times\cdots\times n_r}.
\]
Rank-one tensors are outer products of the form
$\bs v=v_1\otimes\cdots\otimes v_r$ with $v_k\in\R^{n_k}$, that is
\[
  \bs v_{i_1,\ldots,i_r}= (v_1)_{i_1}\cdots (v_r)_{i_r}.
\]
With $e^{(k)}_j$ the $j$-th canonical vector in $\R^{n_k}$,  
$\bs e^{\,i_1,\ldots,i_r}:=e^{(1)}_{i_1}\otimes\cdots\otimes e^{(r)}_{i_r}$ forms an orthonormal basis under the standard inner product
\begin{equation}\label{eq:inner}
  \langle S,T\rangle
  \;=\;\sum_{i_1=1}^{n_1}\!\cdots\sum_{i_r=1}^{n_r}
    S_{i_1,\ldots,i_r}\,T_{i_1,\ldots,i_r},
\end{equation}
whose induced norm is the Frobenius norm
$\|S\|_{\s{f}}:=\sqrt{\langle S,S\rangle}$.  
Throughout we identify $\R^{n_1\times\cdots\times n_r}$ with the algebraic tensor product
$\R^{n_1}\otimes\cdots\otimes\R^{n_r}$; see Section 3 of \cite{comon2008symmetric} for details.

\subsection{Linear maps between tensor spaces}\label{sec:slm}

Let $\U=\R^{m_1\times\cdots\times m_r}$ and
$\V=\R^{n_1\times\cdots\times n_r}$.  
Write $\mathcal L(\U,\V)$ for the space of linear maps $\U\to\V$.  
Exactly as in the matrix case ($r=1$), $\mathcal L(\U,\V)$ is in bijection with the order-$2r$ tensor space
$\R^{n_1\times\cdots\times n_r\times m_1\times\cdots\times m_r}$:
for $\sF\in\mathcal L(\U,\V)$ we set
\[
  \sF_{i_1,\ldots,i_r,j_1,\ldots,j_r}
  :=\bigl(\sF(\bs e^{\,j_1,\ldots,j_r})\bigr)_{i_1,\ldots,i_r}.
\]
Therefore, specifying $\sF$ on the basis tensors $\bs e^{\,j_1,\ldots,j_r}$ determines it uniquely.

\begin{rem}
Throughout, capital letters $S,T$ denote \emph{tensors};
bold letters $\bs v$ denote rank-one tensors;
sans-serif $\sF,\sM,\Sigma$ denote tensors in \emph{operator spaces} like $\cL(\U,\V)$. 
\end{rem}

For linear maps
$\mathsf G:\U\to\R^{k_1\times\cdots\times k_r}$
and
$\sF:\R^{k_1\times\cdots\times k_r}\to\V$
their composition $\sF\mathsf G\in\mathcal L(\U,\V)$ satisfies
\[
  (\sF\mathsf G)_{i_1,\ldots,i_r,j_1,\ldots,j_r}
  =\sum_{\ell_1=1}^{k_1}\!\cdots\sum_{\ell_r=1}^{k_r}
      \sF_{i_1,\ldots,i_r,\ell_1,\ldots,\ell_r}\,
      \mathsf G_{\ell_1,\ldots,\ell_r,j_1,\ldots,j_r}.
\]

\begin{defn}[Tucker maps]
Given matrices $A_k\in\R^{n_k\times m_k}$ ($k=1,\dots,r$), let
$\mathbf A=(A_1,\dots,A_r)$.  
The \emph{Tucker map}
$\sF_{\mathbf A}\in\mathcal L(\U,\V)$ is defined as a map $T\mapsto \bs A\cdot T$, where
\[
  \bigl(\mathbf A\cdot T\bigr)_{i_1,\ldots,i_r}
  \;:=\;\sum_{j_1=1}^{m_1}\!\cdots\sum_{j_r=1}^{m_r}
        (A_1)_{i_1j_1}\cdots(A_r)_{i_rj_r}\,T_{j_1,\ldots,j_r}.
\]
\end{defn}

\begin{rem}
Many authors write the same operation as
$T\times_1A_1\times_2\cdots\times_rA_r$;  we keep ``$\mathbf A\cdot T$'' to emphasise linear-algebraic structure.
\end{rem}

Tucker maps respect rank-one structure:
\[
  (A_1,\dots,A_r)\cdot
    (v_1\otimes\cdots\otimes v_r)
  =(A_1v_1)\otimes\cdots\otimes(A_rv_r).
\]
Their parameter dimension is
$\sum_{k=1}^r m_k n_k\ll\prod_{k=1}^r m_k n_k
      =\dim\mathcal L(\U,\V)$, 
which makes them statistically attractive (see Proposition~\ref{prop:spmap}).

\subsection{Two spectral norms}

For $\sF\in\mathcal L(\U,\V)$ set its operator norm as 
\begin{equation}\label{eq:spnorm0}
      \|\sF\|
  :=\max_{\|S\|_{\s{f}}\le1}\|\sF(S)\|_{\s{f}}
  =\max_{\|S\|_{\s{f}},\,\|T\|_{\s{f}}\le1}
     \langle\sF(S),T\rangle.
\end{equation}
However, a weaker norm is often useful.  
With
\begin{equation}\label{eq:Bv}
      \B_{\s{u}}:=\{\bs u=u_1\otimes\cdots\otimes u_r:
                  \|u_k\|\le1\},
  \qquad
  \B_{\s{v}}:=\{\bs v=v_1\otimes\cdots\otimes v_r:
                  \|v_k\|\le1\},
\end{equation}
define
\begin{equation}\label{eq:spnorm}
  \|\sF\|_{\mathrm{ml}}
  \;:=\;\max_{\bs u\in\B_{\s{u}}}
        \max_{\bs v\in\B_{\s{v}}}
        \langle\sF(\bs u),\bs v\rangle.
\end{equation}
Clearly $\|\sF\|_{\mathrm{ml}}\le\|\sF\|$, and the gap can be large.  
For example, take $\U=\V=\R^{n_1\times\cdots\times n_1}$ and
\[
  \sF(T)=\langle T,\I\rangle\,J,
\]
where $\I$ is the identity tensor ($\I_{i\cdots i}=1$ for $i\in [n_1]$ and zeros otherwise) and $J$ is the all-ones tensor.
Then
\[
  \|\sF\|_{\mathrm{ml}}=n_1^{r/2},
  \qquad
  \|\sF\|=n_1^{(r+1)/2},
\]
so the ratio $\|\sF\|/\|\sF\|_{\mathrm{ml}}=\sqrt{n_1}$ diverges with mode size $n_1$. The following lemma shows that no such gap exists for Tucker maps.

\begin{lem}\label{lem:normsequal}
    For  Tucker maps $\|\sF_{\bs A}\|_{\rm ml}= \|\sF_{\bs A}\|=\prod_{k=1}^r\|A_k\|$.
\end{lem}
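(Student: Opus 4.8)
The plan is to squeeze the three quantities between each other, i.e.\ to prove the chain
\[
\prod_{k=1}^r\|A_k\|\;\le\;\|\sF_{\bs A}\|_{\mathrm{ml}}\;\le\;\|\sF_{\bs A}\|\;\le\;\prod_{k=1}^r\|A_k\|,
\]
which forces all three to be equal. The middle inequality is free: if $\bs u=u_1\otimes\cdots\otimes u_r$ with $\|u_k\|\le1$, then $\|\bs u\|_{\s f}=\prod_k\|u_k\|\le1$, so $\B_{\s u}$ and $\B_{\s v}$ are contained in the respective Frobenius unit balls, and the maximum defining $\|\sF_{\bs A}\|_{\mathrm{ml}}$ in \eqref{eq:spnorm} is taken over a subset of the one defining $\|\sF_{\bs A}\|$ in \eqref{eq:spnorm0}.

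For the upper bound $\|\sF_{\bs A}\|\le\prod_k\|A_k\|$ I would factor the Tucker map into its single-mode multiplications $T\mapsto T\times_k A_k$, which commute and whose composition is $T\mapsto\bs A\cdot T$. For a fixed mode $k$, passing to the mode-$k$ unfolding turns this into ordinary left multiplication, $(T\times_k A_k)_{(k)}=A_k\,T_{(k)}$, and since unfolding is a Frobenius isometry,
\[
\|T\times_k A_k\|_{\s f}=\|A_k\,T_{(k)}\|_{\s f}\le\|A_k\|\,\|T_{(k)}\|_{\s f}=\|A_k\|\,\|T\|_{\s f}.
\]
Composing over $k=1,\dots,r$ gives $\|\bs A\cdot T\|_{\s f}\le\bigl(\prod_k\|A_k\|\bigr)\|T\|_{\s f}$, hence the claimed operator-norm bound. (Equivalently, under vectorization $\sF_{\bs A}$ is the Kronecker product $A_1\otimes\cdots\otimes A_r$, whose largest singular value is the product of the largest singular values of the factors.)

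For the lower bound $\|\sF_{\bs A}\|_{\mathrm{ml}}\ge\prod_k\|A_k\|$, choose for each $k$ unit-norm leading right/left singular vectors $u_k\in\R^{m_k}$, $v_k\in\R^{n_k}$ of $A_k$, so that $A_ku_k=\|A_k\|\,v_k$ and therefore $\langle A_ku_k,v_k\rangle=\|A_k\|$. Set $\bs u=u_1\otimes\cdots\otimes u_r\in\B_{\s u}$ and $\bs v=v_1\otimes\cdots\otimes v_r\in\B_{\s v}$. Since Tucker maps send rank-one tensors to rank-one tensors, $\sF_{\bs A}(\bs u)=(A_1u_1)\otimes\cdots\otimes(A_ru_r)$, and since the inner product of rank-one tensors factorizes over the modes,
\[
\langle\sF_{\bs A}(\bs u),\bs v\rangle=\prod_{k=1}^r\langle A_ku_k,v_k\rangle=\prod_{k=1}^r\|A_k\|.
\]
Hence $\|\sF_{\bs A}\|_{\mathrm{ml}}\ge\prod_k\|A_k\|$, and together with the two preceding displays this closes the chain of inequalities and proves the lemma.

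None of the steps is deep; the only point requiring a little care is the single-mode operator-norm estimate — i.e., correctly identifying $(T\times_k A_k)_{(k)}=A_kT_{(k)}$ and invoking the isometry of unfolding — but this is exactly the routine tensor bookkeeping already relegated to Appendix~\ref{app:tensors_proofs}, so I would simply cite the unfolding identity there rather than re-derive it.
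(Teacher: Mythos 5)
Your proof is correct, and it reaches the same two essential facts as the paper's argument (multiplicativity of the spectral norm across modes, and attainment at the leading singular vectors), but it packages them differently. The paper computes both norms exactly and separately: it invokes Lemma~\ref{lem:tuckermat} to write $\mat(\sF_{\bs A})=A_r\otK\cdots\otK A_1$ and uses $\|B\otK C\|=\|B\|\,\|C\|$ to get $\|\sF_{\bs A}\|=\prod_k\|A_k\|$, and then shows $\langle\sF_{\bs A}(\bs u),\bs v\rangle\le\prod_k\|A_k\|$ with equality at singular vectors to get $\|\sF_{\bs A}\|_{\mathrm{ml}}=\prod_k\|A_k\|$. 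You instead close a one-directional chain $\prod_k\|A_k\|\le\|\sF_{\bs A}\|_{\mathrm{ml}}\le\|\sF_{\bs A}\|\le\prod_k\|A_k\|$, obtaining the upper bound on $\|\sF_{\bs A}\|$ by composing single-mode multiplications and the unfolding identity $(T\times_k A_k)_{(k)}=A_kT_{(k)}$ rather than by flattening to a Kronecker product. The sandwich buys you a small economy (you never need the direct upper bound on the multilinear norm, and you avoid citing the Kronecker flattening lemma), at the cost of introducing the mode-$k$ unfolding identity, which the paper's notation does not otherwise set up in this section; either route is fully rigorous.
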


\subsection{Computations with tensors}\label{sec:tensorcomp}

While the language of abstract linear algebra is convenient for most of the theoretical derivations throughout, in the actual implementations we  work with tensor reshaping of various kinds. In this subsection, we present several reshaping operators and their properties. 
\medskip

\noindent\textbf{Operator} $\vec$\textbf{.} Denote $n=n_1\cdots n_r$. Let $\vec:\V\to \R^{n}$ be a mapping that flattens tensor $T$ to a vector $\vec(T)$ with entries one below another in an arbitrary but fixed order. Particularly, we fix this order by defining the ranking function that takes an $r$-tuple in $[n_1]\times \cdots\times [n_r]$ and outputs a number in $[n]$:
\begin{equation}\label{eq:ranking}
\overline{i_1,\ldots,i_r}\;:=\;1 + \sum_{k=1}^r (i_k - 1) \prod_{\ell=1}^{k-1} n_\ell,\quad\mbox{for any }  i_1\in  [n_1],\ldots, i_r \in  [n_r], \text{ and } r \in \N.
\end{equation}
 This simply gives the colexicographic ordering (see e.g. Section~1.4 in \cite{liu2022tensor}) of the $r$-tuples, which is consistent with how it done in Matlab, R or Python. For example, the induced total ordering on $[2]\times [2]\times [2]$ is $$
(1,1,1)\;\succ\;(2,1,1)\;\succ\;(1,2,1)\;\succ\;(2,2,1)\;\succ\;(1,1,2)\;\succ\;(2,1,2)\;\succ\;(1,2,2)\;\succ\;(2,2,2).
$$
With this ranking function, we have
$\vec(T)_{\overline{i_1,\ldots,i_r}}\;=\; T_{i_1,\ldots, i_r}$ and $\<S,T\>=\vec(S)^\top \vec(T)$.    
\medskip

\noindent\textbf{Operators }$\mat$ \textbf{and }$\pair$\textbf{.}
The $\sF$ be the $2r$-tensor representing a mapping $\sF\in \mathcal L(\U,\V)$. This tensor can be flattened in various useful ways. We consider two. First, we denote by $\mat(\sF)$ the underlying matrix with $n=n_1\cdots n_r$ rows and $m=m_1\cdots m_r$ columns. Formally, $\mat(\sF)\in \R^{n\times m}$ is defined via
\begin{equation}\label{eq:matF}
\mat(\sF)_{\overline{i_1,\ldots,i_r}, \overline{j_1,\ldots, j_r} } \;=\; \sF_{i_1,\ldots,i_r,j_1,\ldots,j_r}. 
\end{equation}

The following lemma is well-known and easily checked. 

\begin{lem}\label{lem:matop}Suppose $\sF\in \mathcal L(\U,\V)$, $\mathsf{G} \in \mathcal{L}(\mathbb{W}, \U)$ for any tensor space $\mathbb{W}$, and $S\in \U$,  it then holds that:
\begin{enumerate}
    \item [(i)] $\vec(\sF S)\;=\;\mat(\sF)\vec(S)$,
    \item [(ii)] $\vec(\mat(\sF))=\vec(\sF)$,
    \item [(iii)] $\mat(\sF\mathsf{G})=\mat(\sF)\mat(\mathsf{G})$.
\end{enumerate}
Moreover, (i) and (iii) together imply that $\mat(\sF^{-1})=\mat(\sF)^{-1}$ if $\sF$ is invertible with inverse $\sF^{-1}$. 
\end{lem}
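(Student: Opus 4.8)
The plan is to reduce everything to bookkeeping with the ranking function \eqref{eq:ranking}, which I will use repeatedly as a bijection $[n_1]\times\cdots\times[n_r]\to[n]$ (colexicographic order), and likewise on the $m$-index set and on the intermediate $k$-index set appearing in a composition. The one structural fact I would isolate first is an \emph{associativity} property of this ranking: writing $\overline{\,\cdot\,}$ for the two-argument ranking with dimensions $(n,m)$ on the right-hand side,
\[
  \overline{i_1,\ldots,i_r,j_1,\ldots,j_r}
  \;=\;\overline{\;\overline{i_1,\ldots,i_r}\,,\;\overline{j_1,\ldots,j_r}\;}.
\]
This drops straight out of \eqref{eq:ranking} by splitting the defining sum into its first $r$ terms and its last $r$ terms and using $\prod_{\ell=1}^{r}n_\ell=n$; it says that flattening a product index set in two stages (the $n$-block and the $m$-block separately, then the resulting pair) agrees with flattening it in one stage. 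Everything else is then a matter of unwinding definitions.

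For (i): expanding the action of an operator on a tensor gives $(\sF S)_{i_1,\ldots,i_r}=\sum_{j_1,\ldots,j_r}\sF_{i_1,\ldots,i_r,j_1,\ldots,j_r}S_{j_1,\ldots,j_r}$; passing to $\vec$ via the ranking and re-indexing the inner sum over the bijective image $\overline{j_1,\ldots,j_r}\in[m]$ turns the right-hand side into $\sum_{\bar j}\mat(\sF)_{\bar i,\bar j}\vec(S)_{\bar j}$ by \eqref{eq:matF}, i.e.\ the $\bar i$-th entry of $\mat(\sF)\vec(S)$. For (iii): the composition formula from Section~\ref{sec:slm}, $(\sF\mathsf{G})_{i_1,\ldots,i_r,j_1,\ldots,j_r}=\sum_{\ell_1,\ldots,\ell_r}\sF_{i_1,\ldots,i_r,\ell_1,\ldots,\ell_r}\,\mathsf{G}_{\ell_1,\ldots,\ell_r,j_1,\ldots,j_r}$, becomes, after applying $\mat$ and re-indexing the sum over $\bar\ell\in[k]$, exactly the $(\bar i,\bar j)$ entry of the matrix product $\mat(\sF)\mat(\mathsf{G})$.

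For (ii): here the associativity property does the work. The matrix $\mat(\sF)\in\R^{n\times m}$ is itself an order-$2$ tensor, so $\vec(\mat(\sF))$ indexes its entries by the two-argument ranking on $[n]\times[m]$, while $\vec(\sF)$ indexes the entries of the $2r$-tensor $\sF$ by the $2r$-argument ranking on $[n_1]\times\cdots\times[m_r]$. Using \eqref{eq:matF} we have $\mat(\sF)_{\bar i,\bar j}=\sF_{i_1,\ldots,i_r,j_1,\ldots,j_r}$, and the associativity identity says precisely that the position $\overline{\bar i,\bar j}$ assigned to this value in $\vec(\mat(\sF))$ equals the position $\overline{i_1,\ldots,i_r,j_1,\ldots,j_r}$ assigned to it in $\vec(\sF)$; hence the two vectors coincide entrywise. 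For the invertibility addendum, note $\mat(\mathrm{id}_\V)=I_n$ directly from \eqref{eq:matF}, since under the operator–tensor identification $(\mathrm{id}_\V)_{i_1,\ldots,i_r,j_1,\ldots,j_r}=\prod_k\delta_{i_kj_k}$; if $\sF$ is invertible then $\U$ and $\V$ have the same dimension, so by (iii) $\mat(\sF)\mat(\sF^{-1})=\mat(\sF\sF^{-1})=I_n=\mat(\sF^{-1}\sF)=\mat(\sF^{-1})\mat(\sF)$, whence $\mat(\sF^{-1})=\mat(\sF)^{-1}$.

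The only place any care is needed is the associativity identity underpinning (ii); items (i), (iii), and the corollary are immediate once it, together with bijectivity of the ranking, is in hand. Even there the ``obstacle'' is purely notational — confirming that the colexicographic order is compatible with grouping consecutive blocks of indices — and there is no genuine difficulty.
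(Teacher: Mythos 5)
Your proof is correct. Note that the paper offers no proof of this lemma at all --- it is stated as ``well-known and easily checked'' and does not appear in Appendix~\ref{app:tensors_proofs} --- so there is nothing to compare against; your write-up simply supplies the omitted verification. The one substantive ingredient you isolate, the associativity of the ranking function $\overline{i_1,\ldots,i_r,j_1,\ldots,j_r}=\overline{\,\overline{i_1,\ldots,i_r},\overline{j_1,\ldots,j_r}\,}$, is exactly the identity the paper itself invokes (in the two-block form $\overline{\overline{i_1,\ldots,i_{s-1}},i_s}=\overline{i_1,\ldots,i_s}$) in its proof of Lemma~\ref{lem:tuckermat}, and your one-line verification of it from \eqref{eq:ranking} is sound, as are the index-bookkeeping reductions for (i), (iii), and the invertibility addendum via $\mat(\mathrm{id})=I_n$.
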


 For any linear map $\sF:\U\to \V$, we define its Frobenius norms as  $\|\sF\|_{\s{f}}:=\|\mat(\sF)\|_{\s{f}}$. Note also that the spectral norm defined in \eqref{eq:spnorm0} satisfies $\|\sF\|=\|\mat(\sF)\|$.

The second useful flattening of $\sF\in \mathcal L(\U,\V)$ pairs the corresponding modes in $\U$ and $\V$. Formally, $\pair(\sF)\in \R^{n_1m_1\times\cdots\times n_rm_r}$ is defined as
$$
\pair(\sF)_{\overline{i_1,j_1},\ldots, \overline{i_r,j_r}}\;=\;\sF_{i_1,\ldots,i_r,j_1,\ldots,j_r}.
$$
\medskip

\noindent\textbf{The special property of Tucker maps.} As we will see in the following proposition, the $\pair(\cdot)$ operator gives a powerful characterization of Tucker maps. 
\begin{prop}\label{prop:spmap}
    A linear mapping $\sF\in \cL(\U,\V)$ is a Tucker map if and only if $\pair(\sF)$ is a rank one tensor in the tensor space $\R^{m_1n_1\times \cdots\times m_rn_r}$.
\end{prop}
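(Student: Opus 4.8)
The plan is to unwind both sides into coordinates and observe that being a \emph{rank-one tensor} means, entrywise, being a product of one factor per mode, which is exactly the shape of the defining formula of a Tucker map once each loading matrix $A_k$ is identified with a vector in $\R^{m_kn_k}$ via $\vec$.

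First I would record the $2r$-tensor representation of a Tucker map. Applying the rank-one preservation identity $(A_1,\dots,A_r)\cdot(v_1\otimes\cdots\otimes v_r)=(A_1v_1)\otimes\cdots\otimes(A_rv_r)$ to the basis tensor $\bs e^{\,j_1,\dots,j_r}=e^{(1)}_{j_1}\otimes\cdots\otimes e^{(r)}_{j_r}$ and reading off the $(i_1,\dots,i_r)$ entry (noting that $A_ke^{(k)}_{j_k}$ is the $j_k$-th column of $A_k$), one gets
\[
(\sF_{\bs A})_{i_1,\dots,i_r,j_1,\dots,j_r}=(A_1)_{i_1j_1}\cdots(A_r)_{i_rj_r}.
\]
For the forward direction, set $a_k:=\vec(A_k)\in\R^{m_kn_k}$, so that $(a_k)_{\overline{i_k,j_k}}=(A_k)_{i_kj_k}$. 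By the definition of $\pair$, the displayed identity becomes $\pair(\sF_{\bs A})_{\overline{i_1,j_1},\dots,\overline{i_r,j_r}}=\prod_{k=1}^r(a_k)_{\overline{i_k,j_k}}$, i.e. $\pair(\sF_{\bs A})=a_1\otimes\cdots\otimes a_r$, which is rank one.

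For the converse, suppose $\pair(\sF)=a_1\otimes\cdots\otimes a_r$ with $a_k\in\R^{m_kn_k}$. Reshape each $a_k$ back into a matrix $A_k\in\R^{n_k\times m_k}$ by $(A_k)_{i_kj_k}:=(a_k)_{\overline{i_k,j_k}}$ (i.e. $A_k=\vec^{-1}(a_k)$) and put $\bs A=(A_1,\dots,A_r)$. Then for every index tuple
\[
\sF_{i_1,\dots,i_r,j_1,\dots,j_r}=\pair(\sF)_{\overline{i_1,j_1},\dots,\overline{i_r,j_r}}=\prod_{k=1}^r(a_k)_{\overline{i_k,j_k}}=\prod_{k=1}^r(A_k)_{i_kj_k}=(\sF_{\bs A})_{i_1,\dots,i_r,j_1,\dots,j_r},
\]
so $\sF=\sF_{\bs A}$ is a Tucker map. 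The only point requiring care is the bookkeeping: checking that the mode-pairing in the definition of $\pair$ and the colexicographic ranking $\overline{\,\cdot\,}$ are applied consistently on both sides, so that the outer-product structure of $\pair(\sF)$ corresponds exactly to the separable product $\prod_k(A_k)_{i_kj_k}$. There is no analytic content beyond this.
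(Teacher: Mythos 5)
Your proof is correct and follows essentially the same route as the paper's: both directions reduce to the coordinate identity $(\sF_{\bs A})_{i_1,\dots,i_r,j_1,\dots,j_r}=(A_1)_{i_1j_1}\cdots(A_r)_{i_rj_r}$, from which $\pair(\sF_{\bs A})=\vec(A_1)\otimes\cdots\otimes\vec(A_r)$ and the converse follow by reading the indices in the other direction. Your write-up is if anything slightly more explicit about the converse (naming $A_k=\vec^{-1}(a_k)$) than the paper's "the left implication is similar."
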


In the terminology of tensor CP decomposition \citep{kolda2009tensor}, we say that $\pair(\sF)$ has CP rank $1$. Besides, the flattening $\mat(\sF)$ also has a useful representation for Tucker maps in terms of tensor Kronecker product defined as follows.
\begin{defn}\label{def:Kron}
    The Kronecker product between  tensors $A \in \V$ and $B \in \U$ is denoted as $A \otK B \in \mathbb{R}^{n_1m_1 \times \cdots \times n_r m_r}$, with entry 
\begin{align*}
[A \otK B]_{\overline{j_1i_1}, \ldots,  \overline{j_ri_r} } \;=\; A_{i_1, \ldots, i_r} B_{j_1, \ldots, j_r}. 
\end{align*}
In particular, when $r=2$ we obtain the familiar matrix Kronecker product. 
\end{defn}

\begin{lem}\label{lem:tuckermat}
    If $\sF_{\bs A}$ is a Tucker map with $\bs A=(A_1,\ldots,A_r)$ then 
    $$
\mat(\sF_{\bs A})\;=\;A_r\otK\cdots \otK A_2\otK A_1.
$$
\end{lem}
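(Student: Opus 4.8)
The plan is to verify the matrix identity entrywise. Both $\mat(\sF_{\bs A})$ and $A_r\otK\cdots\otK A_1$ are elements of $\R^{n\times m}$ with $n=n_1\cdots n_r$ and $m=m_1\cdots m_r$, so I fix an arbitrary row index $\overline{i_1,\ldots,i_r}$ and column index $\overline{j_1,\ldots,j_r}$ and compute the corresponding entry of each side.

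\emph{Left-hand side.} By \eqref{eq:matF}, $\mat(\sF_{\bs A})_{\overline{i_1,\ldots,i_r},\,\overline{j_1,\ldots,j_r}}=(\sF_{\bs A})_{i_1,\ldots,i_r,j_1,\ldots,j_r}$, which by the convention identifying a linear map with its $2r$-tensor equals $\bigl(\sF_{\bs A}(\bs e^{\,j_1,\ldots,j_r})\bigr)_{i_1,\ldots,i_r}$. Since $\bs e^{\,j_1,\ldots,j_r}=e^{(1)}_{j_1}\otimes\cdots\otimes e^{(r)}_{j_r}$ is rank one and Tucker maps preserve rank-one structure, $\sF_{\bs A}(\bs e^{\,j_1,\ldots,j_r})=(A_1e^{(1)}_{j_1})\otimes\cdots\otimes(A_re^{(r)}_{j_r})$, whose $(i_1,\ldots,i_r)$ coordinate is $\prod_{k=1}^r(A_k)_{i_kj_k}$.

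\emph{Right-hand side.} Since $\otK$ on matrices is the ordinary (associative) matrix Kronecker product, I argue by induction on $r$ that $[A_r\otK\cdots\otK A_1]_{\overline{i_1,\ldots,i_r},\,\overline{j_1,\ldots,j_r}}=\prod_{k=1}^r(A_k)_{i_kj_k}$ as well; the case $r=1$ is immediate. For the inductive step the key point is that the ranking \eqref{eq:ranking} nests: directly from its definition, $\overline{i_1,\ldots,i_r}=\overline{i_1,\ldots,i_{r-1}}+(i_r-1)\,n_1\cdots n_{r-1}$, which is precisely $\overline{\,\overline{i_1,\ldots,i_{r-1}},\,i_r\,}$ evaluated with outer dimensions $(n_1\cdots n_{r-1},\,n_r)$, and likewise for the $j$'s. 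Writing $A_r\otK\cdots\otK A_1=A_r\otK(A_{r-1}\otK\cdots\otK A_1)$ and applying Definition~\ref{def:Kron} to this outermost product, the entry factors as $(A_r)_{i_rj_r}\cdot[A_{r-1}\otK\cdots\otK A_1]_{\overline{i_1,\ldots,i_{r-1}},\,\overline{j_1,\ldots,j_{r-1}}}$, and the inductive hypothesis finishes the step. Comparing the two computations proves the lemma.

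I expect the only delicate point to be this index bookkeeping: one must check the nesting identity for the colexicographic convention fixed in \eqref{eq:ranking}, and note that it is exactly what forces the reversed factor order $A_r\otK\cdots\otK A_1$ (rather than $A_1\otK\cdots\otK A_r$) in the statement. Everything else is a mechanical unwinding of definitions. A slicker but essentially equivalent route is to work at the operator level: by Lemma~\ref{lem:matop}(i) it suffices to check $\vec(\bs A\cdot S)=(A_r\otK\cdots\otK A_1)\vec(S)$, and by linearity only for rank-one $S=u_1\otimes\cdots\otimes u_r$, where it reduces to the mixed-product property of the Kronecker product together with $\vec(v_1\otimes\cdots\otimes v_r)=v_r\otimes\cdots\otimes v_1$.
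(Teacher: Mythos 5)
Your proof is correct and follows essentially the same route as the paper's: both sides are compared entrywise, the left-hand side is reduced to $\prod_k (A_k)_{i_k j_k}$ via the rank-one preservation of Tucker maps (which is exactly how the paper's proof of Proposition~\ref{prop:spmap} computes it), and the right-hand side is handled by induction on $r$ using the nesting identity $\overline{\overline{i_1,\ldots,i_{r-1}},i_r}=\overline{i_1,\ldots,i_r}$ for the colexicographic ranking. Your explicit verification of that nesting identity, and your closing remark on the vectorization route, are welcome additions but do not change the argument.
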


\subsection{Self-adjoint mappings}
\label{sec:self-adjoint}

For $\sF\in\mathcal L(\U,\V)$ the adjoint  
$\sF^*\in\mathcal L(\V,\U)$ satisfies
$\langle\sF S,T\rangle=\langle S,\sF^*T\rangle$.
Componentwisely,
$
  \sF^*_{i_1,\ldots,i_r,j_1,\ldots,j_r}
  =\sF_{j_1,\ldots,j_r,i_1,\ldots,i_r}.
$
Denote $\mathbf A^\top=(A_1^\top,\ldots,A_r^\top)$. We then have $\sF^*_{\mathbf A}(T)=\mathbf A^\top\!\cdot T$. A map $\sM:\V\to\V$ is \emph{self-adjoint} when
$\langle\sM T_1,T_2\rangle=\langle T_1,\sM T_2\rangle$.
The identity $\mathbb I_\V$, and compositions such as $\sF\sF^*$, are fundamental examples.

For any linear map $\sM:\V\to \V$ we define its trace and determinant as the trace/determinant of the associated matrix flattening $\tr(\sM)=\tr(\mat(\sM))$, $\det(\sM)=\det(\mat(\sM))$. In particular, using Lemma~\ref{lem:matop}, we easily confirm that $\tr(\sF\sF^*)=\tr(\sF^*\sF)$. Besides, we have the following result.  

\begin{lem}\label{lem:sqSigma}
Suppose that $\sM$ is self-adjoint mapping. Then $\mat(\sM)$ is a symmetric matrix.  Moreover, if $T$ is an eigenvector of $\sM$ with eigenvalue $\lambda$, then $\vec(T)$ is an eigenvector of $\mat(\sM)$ with the same eigenvalue $\lambda$.  
\end{lem}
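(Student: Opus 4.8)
The plan is to unwind both claims directly from the definitions of $\mat(\cdot)$ and $\vec(\cdot)$ together with the componentwise description of the adjoint recalled just above, invoking Lemma~\ref{lem:matop}(i) for the eigenvector part.

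For the first claim, I would start from the hypothesis $\sM=\sM^*$, which by the componentwise formula for the adjoint reads $\sM_{i_1,\ldots,i_r,j_1,\ldots,j_r}=\sM_{j_1,\ldots,j_r,i_1,\ldots,i_r}$ for all index tuples. Since $\sM$ maps $\V$ to $\V$, the matrix $\mat(\sM)$ is square of size $n\times n$ with $n=n_1\cdots n_r$, and its rows and its columns are indexed by the \emph{same} ranking function $\overline{i_1,\ldots,i_r}$. Substituting the defining relation $\mat(\sM)_{\overline{i_1,\ldots,i_r},\,\overline{j_1,\ldots,j_r}}=\sM_{i_1,\ldots,i_r,j_1,\ldots,j_r}$ into the displayed symmetry of the entries of $\sM$ immediately yields $\mat(\sM)_{\overline{i_1,\ldots,i_r},\,\overline{j_1,\ldots,j_r}}=\mat(\sM)_{\overline{j_1,\ldots,j_r},\,\overline{i_1,\ldots,i_r}}$, i.e. $\mat(\sM)^\top=\mat(\sM)$.

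For the second claim, suppose $\sM T=\lambda T$ with $T\neq 0$. Applying $\vec$ to both sides and using its linearity gives $\vec(\sM T)=\lambda\,\vec(T)$, while Lemma~\ref{lem:matop}(i) rewrites the left-hand side as $\mat(\sM)\vec(T)$. Hence $\mat(\sM)\vec(T)=\lambda\,\vec(T)$, and since $\vec$ is a linear bijection, $T\neq 0$ forces $\vec(T)\neq 0$, so $\vec(T)$ is indeed a (nonzero) eigenvector of $\mat(\sM)$ with eigenvalue $\lambda$.

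There is no genuine obstacle here; the only point requiring a moment's care is that the domain and codomain of $\sM$ coincide, so that the row- and column-indexing of $\mat(\sM)$ use one and the same colexicographic ranking — otherwise the very statement ``$\mat(\sM)$ is symmetric'' would not typecheck. Everything else is bookkeeping with the definitions fixed earlier in this section.
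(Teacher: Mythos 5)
Your proof is correct and follows essentially the same route as the paper: the paper verifies symmetry by evaluating $\<\bs e^{i_1,\ldots,i_r},\sM \bs e^{j_1,\ldots,j_r}\>$ and swapping via self-adjointness, which is just the inner-product form of the componentwise identity you use, and the eigenvector claim is handled identically via Lemma~\ref{lem:matop}(i). Your added remarks that the row and column indices share the same ranking function and that $\vec(T)\neq 0$ are harmless extra care.
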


\subsection{Gaussian tensors}\label{sec:gaussian}

 We say that a random tensor $X \in \V$ folows a normal distribution with mean $\mu \in \V$ and covariance {operator} $\Sigma$, a self-adjoint linear map on $\V$, if, for every $T \in \V$, the real-valued random variable $\langle X, T \rangle$ follows a normal distribution with mean $\langle \mu, T \rangle$ and variance $\langle \Sigma T, T \rangle$. We write $X\sim\mathcal N_\V(\mu,\Sigma)$. This definition aligns with the standard definition of a Gaussian distribution on an arbitrary inner product space; see \cite{eaton1983multivariate}.


\begin{rem}
 Note that $\Sigma$ formally represents a self-adjoint mapping on $\V$ and not a matrix. Vectorizing $X$ translates everything to a more familiar format; see Proposition~\ref{prop:allold} below.
   \end{rem}

When $\Sigma$ is the identity mapping, denoted by $\I_{\s{v}}$, and $\mu = 0$, we say that $X$ follows the standard Gaussian distribution on $\V$. In this case, for any $T \in \V$, we have $\langle X, T \rangle \sim N(0, \|T\|_{\s{f}}^2)$. The following result is standard and is not unique to tensor spaces.
\begin{lem}\label{lem:linearGauss}
Consider a linear mapping $\sF: \U \to \V$. If $Z$ has a Gaussian distribution on $\U$ with mean $\mu \in \U$ and covariance $\Sigma$, then $\sF Z$ has a Gaussian distribution on $\V$ with mean $\sF\mu$ and covariance $\sF \Sigma \sF^*$.
\end{lem}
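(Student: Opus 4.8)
The plan is to reduce the statement to the familiar finite-dimensional fact about Gaussian vectors by applying the vectorization isomorphism $\vec$. The key observation is that $\vec:\V\to\R^n$ is a linear isometry: by construction $\langle S,T\rangle=\vec(S)^\top\vec(T)$, so it preserves inner products, and in particular the pairing $\langle X,T\rangle$ that appears in the definition of a Gaussian tensor equals $\vec(X)^\top\vec(T)$. Hence, by the very definition in Section~\ref{sec:gaussian}, $X\sim\cN_\V(\mu,\Sigma)$ holds if and only if the random vector $\vec(X)\in\R^n$ is Gaussian in the usual sense with mean $\vec(\mu)$ and covariance matrix $\mat(\Sigma)$ — indeed $\langle X,T\rangle$ has variance $\langle\Sigma T,T\rangle=\vec(T)^\top\mat(\Sigma)\vec(T)$ by Lemma~\ref{lem:matop}(i), which is exactly the statement that $\vec(X)$ has covariance $\mat(\Sigma)$.

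Next I would translate the linear map $\sF$ and the target quantities through $\vec$. By Lemma~\ref{lem:matop}(i), $\vec(\sF Z)=\mat(\sF)\vec(Z)$, so $\sF Z$ corresponds under $\vec$ to the linear image of the Gaussian vector $\vec(Z)$ by the matrix $\mat(\sF)$. The classical fact for Gaussian random vectors then gives that $\mat(\sF)\vec(Z)$ is Gaussian with mean $\mat(\sF)\vec(\mu)=\vec(\sF\mu)$ and covariance matrix $\mat(\sF)\,\mat(\Sigma)\,\mat(\sF)^\top$. It remains to identify this matrix as $\mat(\sF\Sigma\sF^*)$. Using Lemma~\ref{lem:matop}(iii) twice together with the componentwise description of the adjoint, $\mat(\sF^*)=\mat(\sF)^\top$, so $\mat(\sF\Sigma\sF^*)=\mat(\sF)\mat(\Sigma)\mat(\sF)^\top$, as needed. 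Converting back via the isometry $\vec^{-1}$, we conclude $\sF Z\sim\cN_\V(\sF\mu,\sF\Sigma\sF^*)$; one should also note in passing that $\sF\Sigma\sF^*$ is self-adjoint, which is immediate from $(\sF\Sigma\sF^*)^*=\sF\Sigma^*\sF^*=\sF\Sigma\sF^*$.

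Alternatively — and perhaps more cleanly, avoiding any mention of $\vec$ — one can argue directly from the definitions. For arbitrary $T\in\V$ we have $\langle\sF Z,T\rangle=\langle Z,\sF^*T\rangle$ by definition of the adjoint; since $Z$ is Gaussian on $\U$, the right-hand side is a univariate normal with mean $\langle\mu,\sF^*T\rangle=\langle\sF\mu,T\rangle$ and variance $\langle\Sigma\sF^*T,\sF^*T\rangle=\langle\sF\Sigma\sF^*T,T\rangle$. As $T$ was arbitrary, this is exactly the definition of $\sF Z\sim\cN_\V(\sF\mu,\sF\Sigma\sF^*)$. I would likely present this second route as the main proof since it is a two-line computation and keeps everything intrinsic.

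There is no real obstacle here; the result is "standard and not unique to tensor spaces," as the excerpt itself notes. The only point requiring a little care is making sure every object is interpreted correctly as a map rather than a matrix — in particular that $\sF^*$ really is the adjoint in the inner-product sense and that $\sF\Sigma\sF^*$ is again self-adjoint so that the notation $\cN_\V(\cdot,\cdot)$ is legitimate — but both are one-line checks. If one prefers the vectorization route, the only thing to be careful about is invoking Lemma~\ref{lem:matop} in the right places to match $\mat(\sF^*)$ with $\mat(\sF)^\top$.
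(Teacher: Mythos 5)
Your proposal is correct, and the direct argument you designate as your main proof — computing $\langle\sF Z,T\rangle=\langle Z,\sF^*T\rangle$ and reading off its mean $\langle\sF\mu,T\rangle$ and variance $\langle\sF\Sigma\sF^*T,T\rangle$ from the definition of a Gaussian on an inner product space — is exactly the paper's proof. The vectorization route you sketch as an alternative is also valid but unnecessary here, as the paper reserves that translation for Proposition~\ref{prop:allold}.
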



The following result allows us to rewrite everything in the language of the canonical multivariate Gaussian distribution.

\begin{prop}\label{prop:allold}
    If $X\in \V$ follows Gaussian distribution with mean $\mu\in \V$ and covariance $\Sigma$, then $\vec(X)$ follows Gaussian distribution with mean $\vec(\mu)\in \R^{n}$ and covariance matrix $\mat(\Sigma)$.
 \end{prop}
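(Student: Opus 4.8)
\textbf{Proof plan for Proposition~\ref{prop:allold}.}

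The plan is to verify the claim directly from the definition of a Gaussian tensor given in Section~\ref{sec:gaussian}, using the fact established earlier that $\vec$ is an isometry: $\langle S,T\rangle = \vec(S)^\top\vec(T)$ for all $S,T\in\V$. Recall that $X\sim\cN_\V(\mu,\Sigma)$ means precisely that for every $T\in\V$ the scalar $\langle X,T\rangle$ is univariate normal with mean $\langle\mu,T\rangle$ and variance $\langle\Sigma T,T\rangle$. On the other side, a random vector $Y\in\R^n$ is Gaussian with mean $m$ and covariance $M$ if and only if for every $t\in\R^n$ the scalar $t^\top Y$ is univariate normal with mean $t^\top m$ and variance $t^\top M t$. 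So the whole proof is a matter of translating one characterization into the other through $\vec$.

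First I would fix an arbitrary $t\in\R^n$ and set $T:=\vec^{-1}(t)\in\V$, which is well-defined since $\vec$ is a linear bijection. Then $t^\top\vec(X)=\vec(T)^\top\vec(X)=\langle T,X\rangle=\langle X,T\rangle$, which by hypothesis is univariate normal; since $t$ was arbitrary and $\vec^{-1}$ is onto, every linear functional of $\vec(X)$ is univariate normal, so $\vec(X)$ is a Gaussian vector in $\R^n$. Next I would compute its mean and covariance by matching the two parametrizations: $\E[t^\top\vec(X)] = \langle\mu,T\rangle = \vec(\mu)^\top\vec(T) = t^\top\vec(\mu)$, so the mean of $\vec(X)$ is $\vec(\mu)$; and $\var(t^\top\vec(X)) = \langle\Sigma T,T\rangle = \vec(\Sigma T)^\top\vec(T)$. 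Here I invoke Lemma~\ref{lem:matop}(i), which gives $\vec(\Sigma T)=\mat(\Sigma)\vec(T)=\mat(\Sigma)t$, so $\var(t^\top\vec(X)) = t^\top\mat(\Sigma)\,t$. Since this holds for all $t$, the covariance matrix of $\vec(X)$ is $\mat(\Sigma)$, as claimed. (One may also note $\mat(\Sigma)$ is symmetric by Lemma~\ref{lem:sqSigma}, consistent with it being a covariance matrix.)

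There is no real obstacle here; the only thing to be slightly careful about is the logical direction of the argument — one must check that \emph{all} linear functionals of $\vec(X)$ are normal before concluding $\vec(X)$ is jointly Gaussian, rather than merely reading off marginals, and this is exactly what the surjectivity of $\vec^{-1}$ delivers. The remaining steps are routine substitutions using the isometry property of $\vec$ and the composition identity of Lemma~\ref{lem:matop}(i).
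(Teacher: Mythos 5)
Your proof is correct and follows essentially the same route as the paper's: both arguments use the isometry $\langle S,T\rangle=\vec(S)^\top\vec(T)$ together with Lemma~\ref{lem:matop}(i) to translate the defining property of a Gaussian tensor into the statement that every linear functional $t^\top\vec(X)$ is univariate normal with mean $t^\top\vec(\mu)$ and variance $t^\top\mat(\Sigma)\,t$. Your added emphasis on the surjectivity of $\vec^{-1}$ is a fair point of care but does not change the substance of the argument.
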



\section{Tensor probabilistic PCA}\label{sec:tpca}

\subsection{Model setup}

A probabilistic principal component analysis (PPCA) model, as introduced independently in  \cite{tipping1999probabilistic} and Section~7.3 of \cite{anderson1956statistical}, expresses a random vector $X \in \mathbb{R}^n$ as the sum of three components: (i) a deterministic vector $\mu \in \mathbb{R}^n$, (ii) a linear image $AZ$ of a standard normal vector $Z \in \mathbb{R}^m$, where $A \in \mathbb{R}^{n \times m}$ with $m \le  n$, and (iii) an isotropic noise term $\varepsilon \sim N(\mathbf{0}, \sigma^2 I_n)$. Geometrically, the samples of $X$ are concentrated around the $m$-dimensional affine subspace $\mu + \mathrm{Im}(A)$.

We extend this construction to the tensor case, where $X \in \V = \mathbb{R}^{n_1 \times \cdots \times n_r}$ and $Z \in \U = \mathbb{R}^{m_1 \times \cdots \times m_r}$ with $m_k \le  n_k$, for $k = 1, \dots, r$. A direct naive extension is to consider $X\in \V$ that admit stochastic representation 
\begin{equation}\label{eq:naive}
    X\;=\;\mu+\sF(Z)+\eps, \quad \mbox{where } \sF\in \cL(\U,\V).
\end{equation}
This corresponds to the standard PPCA after vectorizing both $X$ and $Z$. The dimension of this model is prohibitively large and also it does not exploit the tensor structure. In this paper, we restrict the class of linear transformations from $Z \in \U$ to $\V$ to be Tucker maps as introduced in Section~\ref{sec:slm}. 

\begin{defn}
\label{def: PPCA_tensor}
The random tensor $X \in \V$ follows a probabilistic tensor PCA model with parameters $\mu \in \V$, $A_1 \in \mathbb{R}^{n_1 \times m_1}$, \dots, $A_r \in \mathbb{R}^{n_r \times m_r}$, and $\sigma \geq 0$ if 
\begin{equation}\label{eq:XPPCA}
X = \mu + \mathbf{A} \cdot  Z + \eps,   
\end{equation}
where $\mu=\bs A\cdot  \nu$ for some $\nu\in \U$, $Z$ has a standard normal distribution in $\U$, and $\eps$ has a Gaussian distribution in $\V$ with mean zero and covariance $\sigma^2 \I_{\s{v}}$, and $\mathbf{A} = (A_1, \ldots, A_r)$. We also assume that $\eps$ and $Z$ are independent.
\end{defn}
\begin{rem}\label{rem:normalization}
Because the core $Z$ is isotropic, we may, without loss of generality, absorb arbitrary orthogonal rotations of its modes into the factor matrices. We hence fix the right singular factors in the SVD of each $A_k$ to be the identity; this removes the mode-wise orthogonal indeterminacy. More details will be provided in Section~\ref{sec:identifiability}.   
\end{rem}

Geometrically, the situation mirrors the general PPCA model. We model $X$ as approximately lying in a lower-dimensional affine subspace of $\V$. Given repeated observations (or a single observation), the goal is to recover $\mu$ and the underlying linear subspace. However, unlike the general PPCA model, we do not consider all subspaces of a given dimension, but only those that arise as the image of the linear mapping $\sF_{\mathbf{A}}$ for some $\mathbf{A} =(A_1,\ldots,A_r)$. The family of such linear maps has dimension $\sum_{k} m_k n_k$, as opposed to $\prod_{k} m_k n_k$, which is the dimension of all linear maps between $\U$ and $\V$. As we will see, constraining the linear transformations to Tucker maps results in stronger identifiability results than in the trivial PPCA model (\ref{eq:naive}). In particular, the family of all linear subspaces of this form is a proper subvariety in the ambient space, which we call the Tucker variety; see Section~\ref{sec:tuckerdim}.

\subsection{Special cases and related models}

The TPCA family subsumes, refines, or parallels many models that have
appeared under different names. We highlight five connections.

\paragraph{Matrix/tensor–normal distributions.}
If we set  $m_k=n_k$ for all $k$, drop the noise
($\sigma=0$), and take $\nu=\mathbf 0$,  
then $X$ in \eqref{eq:XPPCA} becomes
$\mathcal N_\V(0,\Sigma)$ with
\(
  \Sigma=\sF_{\bs A}\sF_{\bs A}^{*}
        =\Sigma_1\otimes\cdots\otimes\Sigma_r,
  \;
  \Sigma_k:=A_kA_k^{\top}.
\)
This is exactly the \emph{tensor–normal} (or matrix–normal when $r=2$)
distribution studied in
\citep{dawid1981tensornormal,lyu2019tensor,hoff2023core, ren2024transfer}.
TPCA therefore extends those models by (i) allowing
$m_k<n_k$ (dimension reduction) and (ii) re-introducing isotropic
noise~$\sigma^2\I_\V$.

\paragraph{Deterministic Tucker–rank models.}
If we set the latent core to be zero,  
the TPCA family splits into two familiar special cases:
\begin{itemize}
  \item[\textit{(i)}]  \emph{Low-Tucker-rank factor model}  
        \citep{chen2024semi}:  
        \(X=\bs A\cdot \nu+\varepsilon\) with isotropic noise
        \(\varepsilon\sim N(0,\sigma^{2}\mathbb I_{\s{v}})\).
  \item[\textit{(ii)}] \emph{Multilinear PCA (MPCA)}  
        \citep{lu2008mpca}:  
        \(X=\bs A\cdot \nu\) \emph{without} an error term
        (\(\sigma=0\) after centring).
\end{itemize}
Thus MPCA is simply the noise-free limit of the low-Tucker-rank factor
model, and TPCA generalises both by re-introducing a \emph{random}
Gaussian core \(Z\) and/or allowing measurement noise
\(\varepsilon\). 

\paragraph{CP-rank factor models.}
Fix a common latent rank $m_0$ so that $m_1=\cdots=m_r=m_0$ and set
$\nu=0$.  
Instead of a \emph{full} Gaussian core $Z\in\R^{m_0\times\cdots\times m_0}$, suppose that $Z$ is diagonal almost surely with independent standard normal diagonal entries $z_1,\ldots,z_{m_0}$.
Choose factor matrices 
$A_k\in \R^{n_k\times m_0}$ with column vectors $A_{k,1},\ldots,A_{k,m_0}\in \R^{n_k}$. We assume $\|A_{k,j}\|=\omega_j^{1/r}$ for all $k=1,\ldots,r$ and $j=1,\ldots,m_0$ allowing us to rewrite $A_{k,j}=\omega_j^{1/r}\tilde A_{k,j}$, where $\tilde A_{k,j}$ has unit norm. With this degenerate core, the TPCA representation \eqref{eq:XPPCA}
reduces to
\[
   X
   \;=\;\bs A\cdot Z+\varepsilon\;=\;
   \sum_{j=1}^{m_0}\omega_j z_j\;
         \tilde A_{1,\,j}\otimes \tilde A_{2,\,j}\otimes\cdots\otimes \tilde A_{r,\,j}
        +\varepsilon,
\]
where the positive weights \(\omega_j\) absorb the usual CP scaling
indeterminacy.
This leads to the CP-rank factor (or multilinear PCA) model studied
in \citet{han2022tensor,ouyang2023multiway}.
If we regard the latent scores $z_j$'s as fixed unknown constants rather
than Gaussian, we obtain the dynamic-factor specification of
\citet{ghysels2024tensor}.  Thus CP-rank models can be viewed as TPCA
with an \emph{extremely sparse} (diagonal) core: all stochastic
variation is concentrated in $m_0$ rank-one directions, whereas TPCA
allows a full Gaussian core and hence richer mode-to-mode interactions.

\paragraph{Bayesian Tucker and CP literature—relation to TPCA.}
A number of papers develop \emph{Bayesian} Tucker or CP decompositions by putting hierarchical priors on the factor matrices (and sometimes on the core or the noise variance); see, e.g., \citet{zhao2015bayesian,kolda2019bayesiantucker, mai2022doubly}.  These works are not equivalent to TPCA.  First, they usually treat the core as deterministic—or endow it with a sparsity-inducing prior—whereas TPCA fixes the core distribution to be i.i.d.\ Gaussian.  Second, their observation models often omit a measurement-noise term or adopt a different error structure, while TPCA explicitly includes isotropic Gaussian noise, \(\varepsilon\sim N(0,\sigma^{2}\mathbb I_\V)\).  In spite of these differences, one can combine our likelihood  with any of the priors proposed in that Bayesian literature; TPCA then acts as a likelihood ``back-end’’ to which those prior layers may be attached, rather than reproducing the same model.

\paragraph{Spiked tensor models.}
For cubic tensors ($n_k=n_0$, $m_k=1$) and $r\ge3$, set
$A_k=\lambda^{1/r}u$ with a unit vector $u\in\R^{n_0}$ and
$\nu=0$.  Then
\(
  X = \lambda\,u^{\otimes r} + \varepsilon,
\)
the symmetric \emph{spiked tensor} model of
\cite{montanari2014statistical,arous2019landscape,wu2024sharp}.
Thus spiked tensors are the rank-one, symmetric boundary point of
TPCA.

\medskip
These links make it easy to import existing estimation or inference
techniques by specialising the TPCA likelihood.

\subsection{Mean and covariance in the probabilistic tensor PCA model}\label{sec:covariance}

Under the proposed probabilistic tensor PCA model $X=\mu+\sF_{\bs A}Z+\eps$, for any $T \in \V$, we have
$$
\<X,T\>\;=\;\<\mu,T\>+\<Z,\sF_{\bs A}^* T\>+\<\eps,T\>,
$$
which is Gaussian as it is a sum of Gaussian variables. We have the following result.
\begin{lem}\label{lem:Sigma}
If $X\in \V$ satisfies \eqref{eq:XPPCA}, then $X$ is Gaussian with mean $\mu$ and covariance
\begin{equation}\label{eq:Sigma}
\Sigma\;=\;\sigma^2 \I_{\s{v}} + \sF_{\bs A}\sF_{\bs A}^*. 
\end{equation}
\end{lem}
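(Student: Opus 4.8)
The plan is to compute the mean and covariance operator of $X$ directly from the stochastic representation \eqref{eq:XPPCA}, using the characterizing property of Gaussian tensors: a random tensor $Y\in\V$ is $\mathcal N_\V(m,C)$ iff $\langle Y,T\rangle\sim N(\langle m,T\rangle,\langle CT,T\rangle)$ for every $T\in\V$. Since we already observed in the paragraph preceding the lemma that $\langle X,T\rangle$ is a sum of independent Gaussians, $X$ is Gaussian; it remains to identify $m$ and $C$.

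\medskip
\noindent First I would compute the mean. By linearity of expectation and independence, $\E\langle X,T\rangle = \langle\mu,T\rangle + \langle\E Z,\sF_{\bs A}^*T\rangle + \langle\E\eps,T\rangle = \langle\mu,T\rangle$ since $\E Z=0$ and $\E\eps=0$. Hence the mean tensor is $\mu$. Next I would compute the variance of $\langle X,T\rangle$. Writing $\langle X,T\rangle - \langle\mu,T\rangle = \langle Z,\sF_{\bs A}^*T\rangle + \langle\eps,T\rangle$ and using independence of $Z$ and $\eps$, the variance splits as $\var(\langle Z,\sF_{\bs A}^*T\rangle) + \var(\langle\eps,T\rangle)$. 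Since $Z$ is standard Gaussian on $\U$, Lemma~\ref{lem:linearGauss} (or directly the definition) gives $\var(\langle Z,\sF_{\bs A}^*T\rangle) = \|\sF_{\bs A}^*T\|_{\s f}^2 = \langle\sF_{\bs A}^*T,\sF_{\bs A}^*T\rangle = \langle\sF_{\bs A}\sF_{\bs A}^*T,T\rangle$ by the defining property of the adjoint. Since $\eps\sim\mathcal N_\V(0,\sigma^2\I_{\s v})$, we get $\var(\langle\eps,T\rangle) = \langle\sigma^2\I_{\s v}T,T\rangle = \sigma^2\|T\|_{\s f}^2$. Adding, $\var(\langle X,T\rangle) = \langle(\sigma^2\I_{\s v} + \sF_{\bs A}\sF_{\bs A}^*)T,T\rangle$ for all $T$, which by the definition of the covariance operator identifies $\Sigma = \sigma^2\I_{\s v} + \sF_{\bs A}\sF_{\bs A}^*$. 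Finally I would note that $\Sigma$ is genuinely self-adjoint: $\I_{\s v}$ is self-adjoint, and $\sF_{\bs A}\sF_{\bs A}^*$ is self-adjoint since $(\sF_{\bs A}\sF_{\bs A}^*)^* = \sF_{\bs A}^{**}\sF_{\bs A}^* = \sF_{\bs A}\sF_{\bs A}^*$, so the claimed $\Sigma$ is a legitimate covariance operator.

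\medskip
\noindent Alternatively, and perhaps more cleanly, I could package this by viewing $X = \sF_{\bs A}Z + \eps' $ shifted by $\mu$ where $(Z,\eps)$ is jointly Gaussian on $\U\times\V$ with a block-diagonal covariance (because $Z\perp\eps$), and apply Lemma~\ref{lem:linearGauss} to the linear map $(Z,\eps)\mapsto\mu+\sF_{\bs A}Z+\eps$; the covariance of the image is then read off as $\sF_{\bs A}\I_{\s u}\sF_{\bs A}^* + \I_{\s v}\cdot\sigma^2\I_{\s v} = \sF_{\bs A}\sF_{\bs A}^* + \sigma^2\I_{\s v}$. Either route is essentially a one-line computation.

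\medskip
\noindent I do not anticipate a real obstacle here: the statement is a direct consequence of the definition of Gaussian tensors together with bilinearity of the inner product and the adjoint identity $\langle\sF_{\bs A}^*T,\sF_{\bs A}^*T\rangle = \langle\sF_{\bs A}\sF_{\bs A}^*T,T\rangle$. The only point requiring a modicum of care is making sure one uses the abstract (operator) definition of covariance consistently rather than slipping into matrix notation; everything then follows by checking the defining identity for every $T\in\V$.
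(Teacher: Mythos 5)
Your proof is correct and follows essentially the same route as the paper's: compute $\E\langle X,T\rangle$ and $\var(\langle X,T\rangle)$ for arbitrary $T\in\V$ using independence of $Z$ and $\eps$ and the adjoint identity $\langle\sF_{\bs A}^*T,\sF_{\bs A}^*T\rangle=\langle\sF_{\bs A}\sF_{\bs A}^*T,T\rangle$. The extra remarks on self-adjointness and the alternative packaging via Lemma~\ref{lem:linearGauss} are fine but not needed.
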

\begin{proof}
    Since
$\E\< X,T\>=\< \mu,T\>$, the mean of $X$ is $\mu$. Similarly, 
\begin{eqnarray*}
\var(\< X,T\>)&=&\var(\< Z,\sF_{\bs A}^*T\>)+\var(\< \eps,T\>)\\
&=&\<\sF_{\bs A}^* T,\sF_{\bs A}^* T\>+\sigma^2 \<T,T\>\\
&=& \<\sigma^2 T+\sF_{\bs A}\sF_{\bs A}^* T,T\>, 
\end{eqnarray*}
where the second equality follows from the assumption that both $Z$ and $\eps$ are normal variables in their corresponding subspaces with means zero and covariance operators $\mathbb{I}_{\s{u}}$ and $\sigma^2\I_{\s{v}}$ respectively. By definition, the covariance of $X$ is then given by \eqref{eq:Sigma}. 
\end{proof}


\subsection{Spectrum of the covariance}\label{sec:spectrum}

In this subsection, we study the spectrum of the covariance operation $\Sigma$ defined in (\ref{eq:Sigma}). Let $\sigma_1^{(k)}\geq \cdots\geq \sigma_{m_k}^{(k)}$ be the singular values of $A_k$. The eigenvalues of $B_k: =A_kA_k^\top\in \R^{n_k\times n_k}$ are thus 
\begin{align*}
\lambda_1^{(k)}=(\sigma_1^{(k)})^2\;\geq\; \cdots\;\geq\; \lambda_{m_k}^{(k)}=(\sigma_{m_k}^{(k)})^2\;\geq\; \lambda_{m_k+1}^{(k)}\;=\;\cdots\;=\;\lambda_{n_k}^{(k)}\;=\;0.
\end{align*}

Recall that $\sF_{\bs A}:\U\to \V$ is given by $\sF_{\bs A}(S)=\mathbf A\cdot  S$ and $\sF_{\bs A}^*:\V\to \U$ is given by $\sF_{\bs A}^*(T)=\mathbf A^\top \cdot  T$. In consequence, $\sF_{\bs A} \sF_{\bs A}^*:\V\to \V$ is given by $\sF_{\bs A} \sF_{\bs A}^* T= \mathbf{B}\cdot  T$, where
$$
\mathbf B = (B_1, \ldots,B_r) \;=\;(A_1A_1^\top,\ldots, A_r A_r^\top).
$$

\begin{prop}\label{prop:spectrum}
  Denote by $\mathbf u_1^{(k)},\ldots,\mathbf u_{n_k}^{(k)}$ the eigenvectors of $B_k$ corresponding to eigenvalues $\lambda_1^{(k)},\ldots,\lambda_{n_k}^{(k)}$, that is, $B_k \mathbf u_j^{(k)}=\lambda_j^{(k)}\mathbf u_j^{(k)}$. Then, for any $1\leq i_1\leq n_1$,\ldots, $1\leq i_r\leq n_r$ the rank one tensor $\mathbf u_{i_1}^{(1)}\otimes \cdots \otimes \mathbf u_{i_r}^{(r)}$ satisfies
$$
\sF_{\bs A} \sF_{\bs A}^*(\mathbf u_{i_1}^{(1)}\otimes \cdots \otimes \mathbf u_{i_r}^{(r)})\;=\;\lambda_{i_1}^{(1)}\cdots \lambda_{i_r}^{(r)} \mathbf u_{i_1}^{(1)}\otimes \cdots \otimes \mathbf u_{i_r}^{(r)}.
$$
Consequently, the  tensors $\mathbf u_{i_1}^{(1)}\otimes \cdots \otimes \mathbf u_{i_r}^{(r)}$ form the set of (unit) eigenvectors of $\sF_{\bs A} \sF_{\bs A}^*$.  
\end{prop}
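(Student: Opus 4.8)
The plan is to reduce everything to the rank-one compatibility of Tucker maps together with the already-established identity $\sF_{\bs A}\sF_{\bs A}^* = \sF_{\bs B}$ with $\bs B=(B_1,\dots,B_r)$ and $B_k=A_kA_k^\top$. First I would simply apply $\sF_{\bs A}\sF_{\bs A}^*$ to the candidate eigentensor $\mathbf u_{i_1}^{(1)}\otimes\cdots\otimes\mathbf u_{i_r}^{(r)}$ and invoke the identity
\[
(A_1,\dots,A_r)\cdot(v_1\otimes\cdots\otimes v_r)=(A_1v_1)\otimes\cdots\otimes(A_rv_r)
\]
(stated in Section~\ref{sec:slm}) with $A_k$ replaced by $B_k$. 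This turns $\sF_{\bs A}\sF_{\bs A}^*(\mathbf u_{i_1}^{(1)}\otimes\cdots\otimes\mathbf u_{i_r}^{(r)})$ into $(B_1\mathbf u_{i_1}^{(1)})\otimes\cdots\otimes(B_r\mathbf u_{i_r}^{(r)})$, and then the eigenrelation $B_k\mathbf u_{i_k}^{(k)}=\lambda_{i_k}^{(k)}\mathbf u_{i_k}^{(k)}$ pulls each scalar $\lambda_{i_k}^{(k)}$ out of the $k$-th factor by multilinearity of the outer product, giving the claimed eigenvalue $\prod_{k=1}^r\lambda_{i_k}^{(k)}$.

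For the ``consequently'' part, I would argue completeness. Since each $B_k$ is symmetric, $\{\mathbf u_1^{(k)},\dots,\mathbf u_{n_k}^{(k)}\}$ is an orthonormal basis of $\R^{n_k}$; a standard fact (following from $\langle v_1\otimes\cdots\otimes v_r,\,w_1\otimes\cdots\otimes w_r\rangle=\prod_k\langle v_k,w_k\rangle$, itself immediate from \eqref{eq:inner}) is that the $\prod_k n_k = n$ rank-one tensors $\mathbf u_{i_1}^{(1)}\otimes\cdots\otimes\mathbf u_{i_r}^{(r)}$ are orthonormal, hence linearly independent, hence a basis of $\V$. As they are all eigenvectors of the self-adjoint map $\sF_{\bs A}\sF_{\bs A}^*$ and there are $\dim\V$ of them, they exhaust the spectrum; the unit-norm claim is the same tensor-product norm identity. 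One should note explicitly that the eigenvalues $\prod_k\lambda_{i_k}^{(k)}$ need not be distinct (in particular the value $0$ typically has high multiplicity once some $i_k>m_k$), but this does not affect the conclusion, since producing an orthonormal eigenbasis is all that is required.

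I do not anticipate a genuine obstacle here — the proof is essentially a one-line computation plus a bookkeeping remark. The only point that deserves a little care is making the completeness argument precise: spelling out why the tensor products of the mode-wise orthonormal bases form an orthonormal basis of $\V$ (via the inner product \eqref{eq:inner}) and why counting dimensions then suffices, rather than leaving it as ``clearly''. If one prefers an even more mechanical route, an alternative is to vectorize: by Proposition~\ref{prop:allold} and Lemma~\ref{lem:tuckermat}, $\mat(\sF_{\bs A}\sF_{\bs A}^*)=B_r\otK\cdots\otK B_1$, whose eigenpairs are the Kronecker products of the eigenpairs of the $B_k$ by the classical Kronecker spectral theorem, and then Lemma~\ref{lem:sqSigma} transports this back to $\V$; but the direct multilinear computation above is cleaner and I would present that.

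\end{bibunit}
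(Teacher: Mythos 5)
Your proposal is correct and follows essentially the same route as the paper: the same one-line computation $\sF_{\bs A}\sF_{\bs A}^*(\mathbf u_{i_1}^{(1)}\otimes\cdots\otimes\mathbf u_{i_r}^{(r)})=\bs B\cdot(\mathbf u_{i_1}^{(1)}\otimes\cdots\otimes\mathbf u_{i_r}^{(r)})=(B_1\mathbf u_{i_1}^{(1)})\otimes\cdots\otimes(B_r\mathbf u_{i_r}^{(r)})$, followed by orthogonality of the rank-one tensors via the factorization of the inner product. Your completeness step merely makes the paper's orthogonality argument slightly more explicit by adding the dimension count, which is a fine (and arguably cleaner) way to finish.
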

This result is well known (see, e.g., Exercise~4.128 in \cite{hackbusch2012tensor}). We provide proof for completeness.  
\begin{proof} For any $i_1, \ldots , i_r$, we have
    \begin{eqnarray*}
\sF_{\bs A}\sF^*_{\bs A}(\mathbf u_{i_1}^{(1)}\otimes \cdots \otimes \mathbf u_{i_r}^{(r)})&=& \bs B\cdot \mathbf u_{i_1}^{(1)}\otimes \cdots \otimes \mathbf u_{i_r}^{(r)}\\
&=&  (B_1\mathbf u_{i_{1}}^{(1)})\otimes\cdots\otimes (B_r\mathbf u_{i_{r}}^{(r)})\\
&=& \lambda_{i_1}^{(1)}\cdots \lambda_{i_r}^{(r)} \mathbf u_{i_1}^{(1)}\otimes \cdots \otimes \mathbf u_{i_r}^{(r)}.
    \end{eqnarray*}
    To show that these vectors form the set of eigenvectors, we need to show that they are mutually orthogonal. This follows from the fact that 
    $$
    \<\mathbf u_{i_1}^{(1)}\otimes \cdots \otimes \mathbf u_{i_r}^{(r)},\mathbf u_{j_1}^{(1)}\otimes \cdots \otimes \mathbf u_{j_r}^{(r)}\>\;=\; \<\mathbf u_{i_1}^{(1)},\mathbf u_{j_1}^{(1)}\>\cdots \<\mathbf u_{i_r}^{(r)},\mathbf u_{j_r}^{(r)}\>,
    $$
    which is zero unless $i_1=j_1$,\ldots, $i_r=j_r$.
\end{proof}

\begin{cor}\label{cor:specSig}With notations as above, the covariance $\Sigma$ in \eqref{eq:Sigma} has eigenvectors $\mathbf u_{i_1}^{(1)}\otimes \cdots \otimes \mathbf u_{i_r}^{(r)}$,  for $1\leq i_1\leq n_1$, \ldots, $ 1\leq i_r\leq n_r$ and the corresponding eigenvalues are
$$
\sigma^2+\lambda_{i_1}^{(1)}\cdots\lambda_{i_r}^{(r)}.
$$
Among all $n=n_1\cdots n_r$ eigenvalues, there will be $m=m_1\cdots m_r$ greater than or equal to $\sigma^2$ and the remaining ones are equal to $\sigma^2$.
\end{cor}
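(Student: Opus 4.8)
The plan is to read the corollary straight off Proposition~\ref{prop:spectrum}, using only that the identity map $\I_{\s{v}}$ contributes trivially to the spectrum and that the rank-one tensors in that proposition already form a basis. First I would fix a tuple $(i_1,\ldots,i_r)$ with $1\le i_k\le n_k$ and set $\bs u:=\mathbf u_{i_1}^{(1)}\otimes\cdots\otimes\mathbf u_{i_r}^{(r)}$. Since $\Sigma=\sigma^2\I_{\s{v}}+\sF_{\bs A}\sF_{\bs A}^{*}$, we have $\I_{\s{v}}\bs u=\bs u$ and, by Proposition~\ref{prop:spectrum}, $\sF_{\bs A}\sF_{\bs A}^{*}\bs u=(\lambda_{i_1}^{(1)}\cdots\lambda_{i_r}^{(r)})\,\bs u$, so by linearity $\Sigma\bs u=(\sigma^2+\lambda_{i_1}^{(1)}\cdots\lambda_{i_r}^{(r)})\,\bs u$. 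Hence each $\bs u$ is an eigenvector of $\Sigma$ with the claimed eigenvalue.

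Next I would argue that these exhaust the spectrum. Proposition~\ref{prop:spectrum} already establishes that the $n_1\cdots n_r$ tensors $\mathbf u_{i_1}^{(1)}\otimes\cdots\otimes\mathbf u_{i_r}^{(r)}$ are mutually orthogonal unit tensors, hence an orthonormal basis of $\V$, which has dimension $n=n_1\cdots n_r$. A self-adjoint operator diagonalized by a full orthonormal basis has precisely these eigenpairs counted with multiplicity, so there are no further eigenvalues.

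Finally, for the counting statement I would recall from the description of the spectrum of $B_k=A_kA_k^{\top}$ that $\lambda_j^{(k)}\ge 0$ for all $j$ and $\lambda_j^{(k)}=0$ whenever $j>m_k$. Therefore the product $\lambda_{i_1}^{(1)}\cdots\lambda_{i_r}^{(r)}$ vanishes as soon as some $i_k>m_k$, and the corresponding eigenvalue of $\Sigma$ equals $\sigma^2$ exactly. There are exactly $m_1\cdots m_r=m$ tuples with $i_k\le m_k$ for every $k$; for those tuples $\lambda_{i_1}^{(1)}\cdots\lambda_{i_r}^{(r)}\ge 0$, so the eigenvalue is $\ge\sigma^2$, while the remaining $n-m$ tuples give eigenvalue exactly $\sigma^2$. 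I do not expect a real obstacle here; the only subtlety worth a sentence is that if some $A_k$ is rank-deficient a few of the ``first $m$'' products may also be zero, so the eigenvalue can equal $\sigma^2$ there too — which is exactly why the statement reads ``$\ge\sigma^2$'' rather than ``$>\sigma^2$'', and the argument above is consistent with this.
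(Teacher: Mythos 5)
Your proof is correct and matches the paper's (implicit) argument: the corollary is stated as an immediate consequence of Proposition~\ref{prop:spectrum} together with the decomposition $\Sigma=\sigma^2\I_{\s{v}}+\sF_{\bs A}\sF_{\bs A}^{*}$, which is exactly what you spell out. Your remark about rank-deficient $A_k$ explaining the ``$\ge\sigma^2$'' phrasing is a correct and worthwhile observation.
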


\subsection{Model identifiability}\label{sec:identifiability}

In this subsection, we study the identifiability of the parameter tuple $(A_1,\ldots,A_r,\nu,\sigma^2)$. Since the distribution of $X$ in the proposed probabilistic tensor PCA model is uniquely given by the mean $\mu=\bs A\cdot  \nu$ and the covariance $\Sigma$ in \eqref{eq:Sigma}, it is enough to study conditions under which the parameters can be uniquely recovered from these two first population moments.

Since $\mu=\bs A\cdot  \nu$ is the mean of $X$, $\bs A\cdot  \nu$ is identifiable while  $\nu$ is not identifiable without further restrictions. This is because $Z$ has the same distribution as $\bs O \cdot Z$ for any collection of orthogonal matrices $\bs{O}=(O_1,\ldots,O_r)$. In consequence, the parameters
$A_1,\ldots,A_r,\nu,\sigma$ and $A_1O_1^\top,\ldots,A_rO_r^\top, \bs{O}\cdot  \nu,\sigma$ define exactly the same distribution in the probabilistic tensor PCA model. We thus make the following assumption; cf. Remark~\ref{rem:normalization}.
\begin{assp}
\label{assp:colorthogonal} 
The right singular matrix in the SVD decomposition of each $A_k$ is the identity matrix. 
\end{assp}
\noindent Note that Assumption~\ref{assp:colorthogonal} does not assume $A_k$'s to have full column ranks. Under Assumption~\ref{assp:colorthogonal}, the spectral decomposition $B_k=U_k \Lambda_k U_k^\top$ identifies $A_k$ via $A_k=U_k \Lambda_k^{1/2}$. 

To identify $\bs B=(B_1,\ldots,B_r)$ and $\sigma$ from the covariance matrix $\Sigma$ in \eqref{eq:Sigma}, note first that the $r$-tuple $\mathbf B=(B_1,\ldots,B_r)$ defines the same linear transformation as the $r$-tuple $(\alpha_1B_1,\ldots,\alpha_r B_r)$ whenever $\alpha_1\cdots \alpha_r=1$. Thus we can only hope to identify $B_i$'s up to some fixed scaling. It is then a natural question whether the parameters $\mathbf B=(B_1,\ldots,B_r)$ in the tensor PPCA model are identifiable {up to scaling}. We fix the scales of $B_i=A_iA_i^\top$'s as follows. 
\begin{assp}
\label{assp:scale}
Assume that 
$\|B_1\|_{\s{f}}=\ldots=\|B_r\|_{\s{f}}$.
\end{assp}
Consider the special case when each $B_k=c_kI_{n_k}$ is a scalar matrix, which implies $c_k\ge 0$ and $m_k = n_k$, for $k \in [r]$. In particular, we need $c_k=\tfrac{c}{\sqrt{n_k}}$ for some $c>0$ to accommodate for Assumption~\ref{assp:scale}. Then, by \eqref{eq:Sigma}, the covariance operator satisfies $\Sigma=(\sigma^2+c_1\cdots c_r)\I_{\s{v}}$. In this case the parameters are not identifiable as there are many pairs $(\sigma,c)$ that give the same value of $\sigma^2+\tfrac{c^r}{\sqrt{n_1\cdots n_r}}$. We thus add another minor condition.
\begin{assp}
\label{assp:nonscalar} 
Either $n_k>m_k$ for at least one $k$ or at least two of the $B_k$'s are not scalar matrices.
\end{assp}

The following theorem shows that the above assumptions are sufficient to ensure the identifiability of the proposed probabilistic tensor PCA model. 

\begin{thm}\label{th:ident}Consider the probabilistic tensor PCA model with $r\geq 2$, $n_k\geq m_k$ for $k \in [r]$ and with parameters satisfying Assumptions~\ref{assp:colorthogonal}, \ref{assp:scale}, and \ref{assp:nonscalar}. Then the parameters $\bs A$ and $\sigma^2$ are identifiable. If, in addition, all $A_k$'s have full column rank then $\nu$ is also identifiable.
\end{thm}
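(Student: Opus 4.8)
The plan is to work entirely with the first two population moments. By Lemma~\ref{lem:Sigma} the law of $X$ is $\cN_\V(\mu,\Sigma)$ with $\mu=\sF_{\bs A}(\nu)$ and $\Sigma=\sigma^2\I_{\s{v}}+\sF_{\bs A}\sF_{\bs A}^*$, and (Section~\ref{sec:spectrum}) $\sF_{\bs A}\sF_{\bs A}^*=\sF_{\bs B}$ is the Tucker map attached to $\bs B=(B_1,\dots,B_r)$, $B_k=A_kA_k^\top$. It therefore suffices to show that $(\mu,\Sigma)$ pins down, in order, $\sigma^2$; then $\bs B$, up to the scaling fixed by Assumption~\ref{assp:scale} (whence $A_k$ through the SVD normalization of Assumption~\ref{assp:colorthogonal}); and finally $\nu$ when the $A_k$ have full column rank. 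I would set this up by taking a second parameter tuple --- also satisfying Assumptions~\ref{assp:colorthogonal}--\ref{assp:nonscalar}, with the same sizes $n_k$ and ranks $m_k$ --- that produces the same $\Sigma$, with quantities $\sigma'^2$ and $\bs B'$, and derive $\sigma'^2=\sigma^2$ and then $\bs B'=\bs B$.

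The key device for $\sigma^2$ is the $\pair$ lifting. Applying the linear operator $\pair(\cdot)$ and using Proposition~\ref{prop:spmap} (Tucker maps lift to rank-one tensors) together with $\pair(\I_{\s{v}})=\vec(I_{n_1})\otimes\cdots\otimes\vec(I_{n_r})$, one obtains
\[
\pair(\Sigma)\;=\;\sigma^2\,\vec(I_{n_1})\otimes\cdots\otimes\vec(I_{n_r})\;+\;\vec(B_1)\otimes\cdots\otimes\vec(B_r),
\]
so $\pair(\Sigma)$ has CP-rank at most two; the same expression holds with $(\sigma'^2,\bs B')$. Writing $c:=\sigma'^2-\sigma^2$ (WLOG $c\ge0$) and subtracting gives
\[
\vec(B_1)\otimes\cdots\otimes\vec(B_r)\;=\;c\,\vec(I_{n_1})\otimes\cdots\otimes\vec(I_{n_r})\;+\;\vec(B_1')\otimes\cdots\otimes\vec(B_r').
\]
If $n_k>m_k$ for some $k$, then Corollary~\ref{cor:specSig} shows $\lambda_{\min}(\Sigma)=\sigma^2$ and, for the same reason, $\lambda_{\min}(\Sigma)=\sigma'^2$, so $c=0$. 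Otherwise $n_k=m_k$ for every $k$, so Assumption~\ref{assp:nonscalar} forces at least two of the $B_k$, and at least two of the $B_k'$, to be non-scalar. I claim $c=0$ here too, arguing by contradiction. If $c>0$: should some $B_k=0$, the left-hand side vanishes, forcing $\sF_{\bs B'}=-c\,\I_{\s{v}}$, impossible since $\sF_{\bs B'}=\sF_{\bs A'}\sF_{\bs A'}^*$ is positive semidefinite; should some $B_k'=0$, the last term vanishes, forcing $\sF_{\bs B}=c\,\I_{\s{v}}$, i.e.\ $\bs B\cdot T=cT$ for all $T$, which makes every $B_k$ a positive scalar matrix, against Assumption~\ref{assp:nonscalar}. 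Hence all $B_k,B_k'$ are nonzero, and the right-hand side is a sum of two nonzero rank-one tensors equal to the rank-one tensor on the left; by the elementary rigidity fact --- a sum of two nonzero rank-one tensors of order $\ge2$ is rank one only if the factors are parallel in all but at most one mode --- we get $\vec(B_k')\parallel\vec(I_{n_k})$, i.e.\ $B_k'$ scalar, for all but at most one $k$, contradicting Assumption~\ref{assp:nonscalar}. Thus $\sigma^2=\sigma'^2$.

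With $\sigma^2$ fixed, $\sF_{\bs B}=\Sigma-\sigma^2\I_{\s{v}}$ is known, hence so is the rank-one tensor $\pair(\sF_{\bs B})=\vec(B_1)\otimes\cdots\otimes\vec(B_r)$. A nonzero rank-one tensor determines its mode vectors up to scalars with unit product, so $B_k'=\alpha_kB_k$ with $\prod_k\alpha_k=1$; positive semidefiniteness gives $\alpha_k>0$, and then Assumption~\ref{assp:scale} ($\|B_k\|_{\s{f}}$ independent of $k$) forces all $\alpha_k$ equal, hence all equal to $1$, so $\bs B'=\bs B$. (If some $B_k=0$ then $\sF_{\bs A}=0$, the model is pure isotropic noise, and I would dispose of that degenerate case separately.) Under Assumption~\ref{assp:colorthogonal}, the spectral decomposition $B_k=U_k\Lambda_kU_k^\top$ then recovers $A_k=U_k\Lambda_k^{1/2}$. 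Finally, if every $A_k$ has full column rank, then $\mat(\sF_{\bs A})=A_r\otK\cdots\otK A_1$ (Lemma~\ref{lem:tuckermat}) has full column rank, so $\sF_{\bs A}$ is injective and $\nu$ is the unique preimage of $\mu$ under $\sF_{\bs A}$, e.g.\ $\nu=(A_1^+,\dots,A_r^+)\cdot\mu$ with $A_k^+=(A_k^\top A_k)^{-1}A_k^\top$.

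I expect the only real obstacle to be the identification of $\sigma^2$ when $n_k=m_k$ for all $k$, where $\sigma^2$ is no longer the smallest eigenvalue of $\Sigma$. The argument there hinges on the $\pair$-lifting (Proposition~\ref{prop:spmap}), on the identity map $\I_{\s{v}}$ also lifting to a rank-one tensor, and on the rigidity of rank-one tensors under addition --- and this is precisely where Assumption~\ref{assp:nonscalar} is consumed. The remaining ingredients (uniqueness of a rank-one decomposition up to scaling, the SVD normalization, inversion for $\nu$) are routine.
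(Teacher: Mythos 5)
Your proof is correct, but it takes a genuinely different route from the paper's. The paper argues spectrally: it shows that the eigenvectors of $\Sigma$ coincide with those of $\sF_{\bs B}$, which by Proposition~\ref{prop:spectrum} are Segre products $\mathbf u_{i_1}^{(1)}\otimes\cdots\otimes\mathbf u_{i_r}^{(r)}$; injectivity of the Segre map (up to sign) recovers the eigenvectors of each $B_k$, and the remaining work is a hands-on analysis of the multiplicative eigenvalue system $\sigma^2+\lambda_{i_1}^{(1)}\cdots\lambda_{i_r}^{(r)}={\sigma'}^2+{\lambda'_{i_1}}^{(1)}\cdots{\lambda'_{i_r}}^{(r)}$, split into the two branches of Assumption~\ref{assp:nonscalar} and driven by eigenvalue differences. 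You instead pass through the $\pair$ lifting (essentially Proposition~\ref{prop: Sigma}, which the paper only introduces later for the power-iteration algorithm), write $\pair(\Sigma)$ as a CP-rank-$\le 2$ tensor, and kill the offset $c=\sigma'^2-\sigma^2$ in the hard case $n_k=m_k$ by the rigidity of rank-one tensors under addition of a second nonzero rank-one term — a single clean structural fact replacing the paper's eigenvalue-difference bookkeeping. The trade-off: your argument is shorter and more conceptual in the hard case and exposes the link between identifiability and the rank-2 structure exploited algorithmically in Section~\ref{sec:algorithm}, whereas the paper's spectral route is self-contained within Section~\ref{sec:tpca} and yields the explicit eigenstructure as a by-product; both consume Assumptions~\ref{assp:colorthogonal}--\ref{assp:nonscalar} at the same junctures. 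The only loose end you flag — some $B_k=0$ — closes immediately: Assumption~\ref{assp:scale} forces all $B_k=0$ simultaneously, which is excluded by Assumption~\ref{assp:nonscalar} when $n_k=m_k$ for all $k$ and is harmless otherwise since $\sigma^2$ is then read off the minimal eigenvalue.
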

\begin{proof}
Suppose that we have a covariance operator in \eqref{eq:Sigma} that can be equivalently given by two sets of parameters
    $$
    \sigma^2\I_{\s{v}}+\sF_{\mathbf B}\;=\;{\sigma'}^2\I_{\s{v}}+\sF_{\mathbf B'}.
    $$
    Since the eigenvectors of $ \sigma^2\I_{\s{v}}+\sF_{\mathbf B}$ are the same as those of $\sF_{\mathbf B}$, it follows that $\sF_{\mathbf B}$ and $\sF_{\mathbf B'}$ have the same eigenvectors. By Proposition~\ref{prop:spectrum}, these eigenvectors are tensor products of eigenvectors of $B_1$,\ldots,$B_r$. Denote $S^{n_k-1}=\{v\in \R^{n_k}: \|v\|=1\}$ as the unit sphere in $\R^{n_k}$ and consider the mapping
\begin{equation}\label{eq:segre}
    S^{n_1-1}\times \cdots\times S^{n_r-1}\to \R^{n_1\times \cdots \times n_r}\qquad (v^{(1)},\ldots, v^{(r)})\;\mapsto \;v^{(1)}\otimes\cdots\otimes v^{(r)}.    
\end{equation}
Given any point $\mathbf  v=v^{(1)}\otimes\cdots\otimes v^{(r)}$ in the image, we can uniquely identify  each component $v^{(i)}$ up to scaling. Indeed, fix $i_1,\ldots,i_r$ such that $\mathbf v_{i_1,\ldots, i_r}\neq 0$ (there will be always at least one such coordinate). Computing 
$$
\frac{\mathbf v_{j i_2 \cdots i_r}}{\mathbf v_{i_1 i_2 \cdots i_r}}\;=\;\frac{v^{(1)}_{j}v^{(2)}_{i_2}\cdots v^{(r)}_{i_r}}{v^{(1)}_{i_1}v^{(2)}_{i_2}\cdots v^{(r)}_{i_r}}\;=\;\frac{v^{(1)}_{j}}{v^{(1)}_{i_1}},\qquad\mbox{for }j\in [n_1]\setminus \{i_1\},
$$
recovers $v^{(1)}$ up to scaling and the same argument applies to all other components. Note also that since $v^{(k)}\in S^{n_k-1}$, recovering up to scale means recovering up to sign, which implies that the eigenvectors of $B_k$ are equal to the eigenvectors of $B_k'$ for each $k\in [r]$. 

Let $\lambda_{1}^{(k)},\ldots,\lambda_{n_k}^{(k)}$ be the eigenvalues of $B_k$ and ${\lambda_{1}'}^{(k)},\ldots,{\lambda'_{n_k}}^{\!\!\!\!(k)}$ be the eigenvalues of $B_k'$ for $k=1,\ldots,r$. By Corollary~\ref{cor:specSig}, we get 
\begin{equation}\label{eq:eigvalsi}
    \sigma^2+\lambda_{i_1}^{(1)}\cdots \lambda^{(r)}_{i_r}\;=\;{\sigma'}^2+{\lambda_{i_1}'}^{(1)}\cdots {\lambda'_{i_r}}^{\!\!\!(r) } \qquad\mbox{for all }\; i_1 \in  [n_1],\ldots, i_r\in [n_r].
\end{equation}
By Assumption~\ref{assp:nonscalar} we either have $n_k>m_k$ for some $k$ or at least two $B_k$'s are not scalar matrices. Suppose first that $n_{k_0}>m_{k_0}$ for a $k_0 \in [r]$. Then $B_{k_0}$ has some eigenvalues equal to zero and we conclude from \eqref{eq:eigvalsi} that $\sigma=\sigma'$. Arguing as in the first part of the proof, we conclude that we can identify the vector of eigenvalues of $B_k$ up to scaling, that is, $B_k'=c_k B_k$ for all $k \in [r]$. By Assumption~\ref{assp:scale} and the positive semi-definiteness of the $B_k$'s, there exists $c\geq 0$ such that $c_k=c$ for all $k\in [r]$. In this case \eqref{eq:eigvalsi} gives
\begin{equation}\label{eq:auxident}
    \lambda_{i_1}^{(1)}\cdots \lambda^{(r)}_{i_r}(1-c^r)\;=\;0 \qquad\mbox{for all }\;\; i_1\in [n_1],\ldots,  i_r\in [n_r].    
\end{equation}
If $c=1$, then \eqref{eq:auxident} holds and $B_k=B_k'$ for all $k \in [r]$. If $c\neq 1$ then \eqref{eq:auxident} holds only if some $B_k$ is zero. Indeed, because each index tuple can be chosen with non-zero $\lambda_{i_k}^{(k)}$ unless $B_k=0$, the only possibility is that at least one $B_k$ is the zero matrix, contradicting Assumption~\ref{assp:scale}. We conclude that  $B_k=B_k'$ for all $k\in [r]$.

Suppose now that $n_k=m_k$ for $k\in [r]$ but two of the matrices $B_i$, say $B_1$ and $B_2$, are not scalar matrices. From \eqref{eq:eigvalsi}, it follows that for any $i_1,j_1,i_2,\ldots,i_r$,
$$
(\lambda_{i_1}^{(1)}-\lambda_{j_1}^{(1)})\lambda_{i_2}^{(2)}\cdots \lambda^{(r)}_{i_r}\;=\;({\lambda_{i_1}'}^{(1)}-{\lambda_{j_1}'}^{(1)}){\lambda_{i_2}'}^{(2)}\cdots {\lambda_{i_r}'}^{(r)}.
$$
Moreover, as $B_1$ is not a scalar matrix and no $B_k$ is a zero matrix (by Assumption~\ref{assp:scale}), there exist $i_1, j_1$ such that $\lambda^{(1)}_{i_1}> \lambda^{(1)}_{j_1}$ and $i_2,\ldots,i_r$ such that $\lambda_{i_2}^{(2)}\cdots \lambda^{(r)}_{i_r}\neq 0$. From this it follows that for every $l_2=1,\ldots,n_2$, 
$$
\frac{\lambda^{(2)}_{l_2}}{\lambda^{(2)}_{i_2}}\;=\;\frac{{\lambda_{l_2}'}^{(2)}}{{\lambda_{i_2}'}^{(2)}}
$$
showing that $B_2'=c_2 B_2$ for some $c_2>0$. Similarly, $B_k'=c_k B_k$ for all $k=3,\ldots,r$. By Assumption~\ref{assp:scale}, we also conclude $c_2=\ldots=c_r=:c$. Since $B_2$ is also non-scalar, we can use a similar argument to conclude $B_1'=cB_1$. In this case \eqref{eq:eigvalsi} gives
$$
    \lambda_{i_1}^{(1)}\cdots \lambda^{(r)}_{i_r}(1-c^r)\;=\;{\sigma'}^2-\sigma^2 \qquad\mbox{for all }\;\; i_1\in [n_1],\ldots,  i_r\in [n_r].
$$
As not all eigenvalues of $B_1$ are the same, this is only possible if $c=1$ and $\sigma^2={\sigma'}^2$.

Finally, recall that $\bs A\cdot  \nu$ is always identifiable. Under Assumption~\ref{assp:colorthogonal}, identifying all $B_i$'s uniquely allows us to identify all $A_i$'s uniquely. If all $A_i$'s have full rank, the kernel of $\sF_{\bs A}$ is trivial and so $\nu$ is identifiable too. 
\end{proof}

\subsection{Tucker variety and model dimension}\label{sec:tuckerdim}

For a tensor $T\in \V$ denote by ${\rm rank}(T)=(d_1,\ldots,d_r)$ as its multilinear rank, also known as Tucker rank, where $d_k$ is the rank of the $n_k \times \prod_{l \ne k} n_l$ matrix whose columns are all the $k$-th mode fibers of $T$. The Tucker manifold $\cT(d_1,\ldots,d_r)\subseteq \V$ is the set of all tensors $T\in \V$ of the fixed multilinear rank. For the proof of the next result see Section~2 in \cite{kressner2014low}.
\begin{lem}\label{lem:kressner}
    The Tucker manifold $\cT(d_1,\ldots,d_r)$ is a smooth manifold and its dimension is $\sum_k (d_k n_k-d_k^2)+\prod_{k} d_k$. If $T\in \cT(d_1,\ldots,d_r)$,  then $T$ possesses the so-called Tucker decomposition $T=\bm{U}\cdot  S$ for some $S\in \R^{d_1\times \cdots \times d_r}$ and $\bs U=(U_1,\ldots,U_r)$ with $U_k\in \R^{n_k \times d_k}$ satisfying $U_k^\top U_k = I_{d_k}$, for $k \in [r]$.
\end{lem}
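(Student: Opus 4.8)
The plan is to prove the two assertions in order: first the existence of the Tucker decomposition, and then—exploiting the parametrization it supplies—the smooth-manifold structure together with the dimension count. For the decomposition, let $T\in\cT(d_1,\ldots,d_r)$ and let $T_{(k)}\in\R^{n_k\times\prod_{\ell\neq k}n_\ell}$ denote its mode-$k$ unfolding, so $\mathrm{rank}(T_{(k)})=d_k$ by definition of the multilinear rank. Pick $U_k\in\R^{n_k\times d_k}$ with $U_k^\top U_k=I_{d_k}$ whose columns span $\mathrm{col}(T_{(k)})$ (for instance the $d_k$ leading left singular vectors of $T_{(k)}$), and set $S:=\bs U^\top\cdot T\in\R^{d_1\times\cdots\times d_r}$. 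Since $U_kU_k^\top$ is the orthogonal projector onto $\mathrm{col}(T_{(k)})$ it fixes every mode-$k$ fibre of $T$; iterating over $k=1,\ldots,r$ and composing Tucker maps yields $\bs U\cdot S=(U_1U_1^\top,\ldots,U_rU_r^\top)\cdot T=T$, and inspecting the unfoldings of $S$ gives $\mathrm{rank}(S_{(k)})=d_k$, i.e.\ $S$ has full multilinear rank. This proves the decomposition claim and simultaneously exhibits $\cT(d_1,\ldots,d_r)$ as the image of a parametrization.

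For the manifold structure, write $\mathrm{St}(n,d):=\{U\in\R^{n\times d}:U^\top U=I_d\}$ for the Stiefel manifold and let $\mathcal P\subset\R^{d_1\times\cdots\times d_r}$ be the open (and, by the previous step, nonempty) set of cores of full multilinear rank $(d_1,\ldots,d_r)$. Consider the smooth map
\[
  \phi:\ \textstyle\prod_{k=1}^r\mathrm{St}(n_k,d_k)\times\mathcal P\ \longrightarrow\ \V,\qquad \phi(\bs U,S)=\bs U\cdot S .
\]
Using that each $U_k$ has full column rank (together with the mode-$k$ unfolding formula) one checks $\mathrm{im}(\phi)=\cT(d_1,\ldots,d_r)$. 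The compact Lie group $G:=O(d_1)\times\cdots\times O(d_r)$ acts on the domain by $\bs O\cdot(\bs U,S)=(U_1O_1,\ldots,U_rO_r,\ \bs O^\top\cdot S)$, and $\phi$ is $G$-invariant; conversely, if $\bs U\cdot S=\bs U'\cdot S'$ with both arguments admissible, then matching mode-$k$ column spaces forces $U_k'=U_kO_k$ for a unique $O_k\in O(d_k)$, hence $S'=\bs O^\top\cdot S$, so the fibres of $\phi$ are exactly the $G$-orbits. The action is free—right multiplication on a Stiefel manifold is free—and proper, since $G$ is compact.

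By the quotient manifold theorem, $\mathcal M:=\bigl(\prod_k\mathrm{St}(n_k,d_k)\times\mathcal P\bigr)/G$ is a smooth manifold, $\phi$ descends to a smooth injection $\bar\phi:\mathcal M\to\V$, and a computation of $d\phi$ shows its kernel at every point is exactly the tangent space to the $G$-orbit, so $d\phi$ has constant rank and $\bar\phi$ is an immersion. Since $\cT(d_1,\ldots,d_r)=\bigcap_k\{T:\mathrm{rank}(T_{(k)})=d_k\}$ is locally closed in $\V$ (each factor is a determinantal variety with its lower-rank locus removed), the injective immersion $\bar\phi$ is a homeomorphism onto its image, hence an embedding, and $\cT(d_1,\ldots,d_r)$ is a smooth embedded submanifold of $\V$ diffeomorphic to $\mathcal M$. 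Finally, with $\dim\mathrm{St}(n_k,d_k)=n_kd_k-\tfrac12 d_k(d_k+1)$, $\dim\mathcal P=\prod_k d_k$ and $\dim G=\sum_k\tfrac12 d_k(d_k-1)$, freeness of the action gives
\[
  \dim\cT(d_1,\ldots,d_r)=\sum_{k=1}^r\Bigl(n_kd_k-\tfrac{d_k(d_k+1)}{2}-\tfrac{d_k(d_k-1)}{2}\Bigr)+\prod_{k=1}^r d_k=\sum_{k=1}^r\bigl(d_kn_k-d_k^2\bigr)+\prod_{k=1}^r d_k .
\]

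The decomposition step and the dimension arithmetic are routine; the real obstacle is the middle of the last step—upgrading the abstract quotient $\mathcal M$ to an \emph{embedded} submanifold of $\V$. This requires the constant-rank computation for $d\phi$ at an arbitrary point (showing $\ker d\phi$ is no larger than the orbit direction) together with the topological check that $\cT(d_1,\ldots,d_r)$ carries the subspace topology matching the quotient. A self-contained alternative that avoids the quotient machinery is to build explicit charts: near $T_0=\bs U_0\cdot S_0$ fix orthonormal bases $U_{0,k}^{\perp}\in\R^{n_k\times(n_k-d_k)}$ of the orthogonal complements of $\mathrm{col}(U_{0,k})$ and parametrize by $(W_1,\ldots,W_r,C)\mapsto(U_{0,1}+U_{0,1}^{\perp}W_1,\ldots,U_{0,r}+U_{0,r}^{\perp}W_r)\cdot C$ with $W_k\in\R^{(n_k-d_k)\times d_k}$ and $C$ near $S_0$; the restriction of this parametrization to the gauge slice is a local diffeomorphism onto a neighbourhood of $T_0$ in $\cT(d_1,\ldots,d_r)$, yielding both the chart and the count $\sum_k(n_k-d_k)d_k+\prod_k d_k$. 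Verifying that this slice parametrization is injective near $T_0$, has the right image, and is an immersion—the substantive content—is a routine but somewhat lengthy differential computation.
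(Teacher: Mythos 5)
The paper does not prove this lemma at all: it defers entirely to Section~2 of \cite{kressner2014low} (which in turn rests on the Koch--Lubich / Uschmajew--Vandereycken treatment of the Tucker manifold). You have therefore supplied an argument where the paper supplies only a citation, and the argument you give is essentially the standard one from that literature: existence of the decomposition via the HOSVD-type projector identity $(U_1U_1^\top,\ldots,U_rU_r^\top)\cdot T=T$, and the manifold structure via the parametrization $\phi(\bs U,S)=\bs U\cdot S$ on $\prod_k\mathrm{St}(n_k,d_k)\times\mathcal P$ modulo the free, proper action of $O(d_1)\times\cdots\times O(d_r)$, with the dimension count $\sum_k\bigl(n_kd_k-\tfrac{d_k(d_k+1)}{2}-\tfrac{d_k(d_k-1)}{2}\bigr)+\prod_kd_k=\sum_k(d_kn_k-d_k^2)+\prod_kd_k$. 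The decomposition step, the fibre identification ($U_k'=U_kO_k$ forced by matching mode-$k$ column spaces, then $S'=\bs O^\top\cdot S$ by injectivity of $\sF_{\bs U}$), and the arithmetic are all correct.

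One justification is stated too quickly: local closedness of $\cT(d_1,\ldots,d_r)$ in $\V$ does not by itself imply that the injective immersion $\bar\phi$ is a homeomorphism onto its image; an injective immersion can fail to be an embedding even when its image is locally closed. You are right to single this out as the substantive obstacle, and the remedy you propose — the explicit slice parametrization $(W_1,\ldots,W_r,C)\mapsto(U_{0,1}+U_{0,1}^{\perp}W_1,\ldots)\cdot C$ near a base point, verified to be a local diffeomorphism onto a neighbourhood in $\cT$ — is the correct and standard way to close it (it produces the charts directly and makes the quotient argument unnecessary). As written, the proof is a correct and honest sketch with the genuinely technical verifications (constant rank of $d\phi$, or equivalently injectivity and surjectivity of the slice chart near $T_0$) acknowledged rather than carried out; since the paper offers no proof of its own, there is nothing in the paper for these details to be checked against, and the reader who wants them must still consult the cited references.
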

Here we also consider the Tucker variety $\overline{\cT}(d_1,\ldots,d_r)$, defined as
$$
\overline{\cT}(d_1,\ldots,d_r)\;:=\;\bigcup_{d_1'\leq d_1,\ldots, d_r'\leq d_r} {\cT}(d_1',\ldots,d_r'),
$$
which is obtained by taking the closure  of ${\cT}(d_1,\ldots,d_r)$. The Tucker variety is known in algebraic geometry as the subspace variety; see Section 3.4.1 in \cite{landsberg2011tensors}. This results in a non-smooth algebraic variety with defining equations given by setting all $(d_k+1)\times (d_k+1)$ minors of the $k$-th mode matricization of the underlying tensor to be $0$ for every $k$ such that $d_k<n_k$. The dimension of the Tucker variety is equal to the dimension of $\cT(d_1,\ldots,d_r)$. 

Lemma~\ref{lem:kressner} suggests a link between Tucker manifolds and our construction. The exact relation is more subtle.
\begin{prop}\label{prop:tuckervar}
    Consider the set 
    $\mathcal{M}(m_1,\ldots,m_r)\;=\;\{T\in \V:\;T=\bs A\cdot  S, \;S\in \U\}$.
    Then 
    $$
\mathcal{M}(m_1,\ldots,m_r)\;=\;\overline{\cT}(d_1,\ldots,d_r),
    $$
    where $d_k=\min\{m_k,\prod_{l \neq k}m_l\}$. In particular,
    $$
    \dim(\mathcal{M}(m_1,\ldots,m_r))\;=\;\sum_k (d_k n_k-d_k^2)+\prod_{k}d_k.
    $$
\end{prop}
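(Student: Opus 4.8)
The plan is to prove the two asserted sets are equal by double inclusion, and then read off the dimension formula from Lemma~\ref{lem:kressner}. The key structural fact is that every tensor of the form $\bs A\cdot S$ with $A_k\in\R^{n_k\times m_k}$, $S\in\U$, has multilinear rank $(d_1,\dots,d_r)$ with $d_k\le\min\{m_k,\prod_{\ell\ne k}m_\ell\}$, and conversely every such rank is attained. So I would first recall why $d_k\le\min\{m_k,\prod_{\ell\ne k}m_\ell\}$ holds for \emph{any} tensor, not just those in our set: the $k$-th mode matricization of $T\in\R^{n_1\times\cdots\times n_r}$ is an $n_k\times\prod_{\ell\ne k}n_\ell$ matrix, so its rank is trivially at most $\prod_{\ell\ne k}n_\ell$ — but more to the point, once we write $T=\bs A\cdot S$, the $k$-th matricization factors through $A_k$ (it equals $A_k$ times the $k$-th matricization of $\bs A_{-k}\cdot S$, where $\bs A_{-k}$ denotes the Tucker map acting on all modes except $k$), which bounds the rank by $m_k$; and it also factors through the $k$-th matricization of $S$, which has at most $\min\{m_k,\prod_{\ell\ne k}m_\ell\}$ rank. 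Hence $\mathcal M(m_1,\dots,m_r)\subseteq\overline{\cT}(d_1,\dots,d_r)$ with $d_k=\min\{m_k,\prod_{\ell\ne k}m_\ell\}$.

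For the reverse inclusion, I would take $T\in\overline{\cT}(d_1,\dots,d_r)$, so ${\rm rank}(T)=(d_1',\dots,d_r')$ with $d_k'\le d_k$ for all $k$. By Lemma~\ref{lem:kressner}, $T=\bs U\cdot S'$ for $U_k\in\R^{n_k\times d_k'}$ with orthonormal columns and $S'\in\R^{d_1'\times\cdots\times d_r'}$. Since $d_k'\le d_k\le m_k$, I can pad: set $A_k=[\,U_k\mid 0\,]\in\R^{n_k\times m_k}$ and let $S\in\U=\R^{m_1\times\cdots\times m_r}$ be $S'$ placed in the leading $d_1'\times\cdots\times d_r'$ block with zeros elsewhere. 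Then $\bs A\cdot S=\bs U\cdot S'=T$ because the zero columns of each $A_k$ only interact with the zero entries of $S$. This shows $\overline{\cT}(d_1,\dots,d_r)\subseteq\mathcal M(m_1,\dots,m_r)$, completing the set equality. The dimension formula is then immediate: the dimension of the Tucker variety equals that of the Tucker manifold $\cT(d_1,\dots,d_r)$, which by Lemma~\ref{lem:kressner} is $\sum_k(d_kn_k-d_k^2)+\prod_kd_k$.

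**The main obstacle** I anticipate is the ``conversely, every such rank is attained'' direction being subtle at the \emph{manifold} level rather than the variety level — i.e., one must be careful that $\mathcal M(m_1,\dots,m_r)$ really is the \emph{closure} $\overline{\cT}$ and not just $\cT(d_1,\dots,d_r)$. This is where it matters that we allow $S$ (equivalently the $A_k$) to degenerate: by choosing $A_k$ rank-deficient we reach all lower multilinear ranks, which is exactly what makes the image the subspace variety. A secondary delicate point is verifying the formula $d_k=\min\{m_k,\prod_{\ell\ne k}m_\ell\}$ and not simply $d_k=m_k$: the binding constraint $\prod_{\ell\ne k}m_\ell$ arises because the $k$-th matricization of the core $S\in\U$ cannot have rank exceeding its number of columns $\prod_{\ell\ne k}m_\ell$, and this is genuinely attained (e.g. take $S$ with a generic such matricization). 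Both of these require only elementary linear algebra on matricizations, so I would present them compactly, leaning on the matricization-composition identities implicit in Lemma~\ref{lem:matop} and the Tucker-product structure; no deep input beyond Lemma~\ref{lem:kressner} is needed.
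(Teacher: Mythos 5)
Your proposal is correct and follows essentially the same route as the paper: bound the mode-$k$ matricization rank via the factorization through $A_k$ and the core to get $\mathcal M\subseteq\overline{\cT}$, then pad $U_k$ and embed $S'$ with zeros for the reverse inclusion, and read off the dimension from Lemma~\ref{lem:kressner}. The only cosmetic difference is that you pad $A_k$ with zero columns where the paper appends an orthonormal complement; both work equally well.
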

\begin{proof}
    If $T\in \mathcal{M}(m_1,\ldots,m_r)$ then its $1$-st mode matricization $T_{(1)}$ satisfies 
    $$
    T_{(1)}\;=\;A_1 S_{(1)} (A_r\otK \cdots\otK A_2)^\top
    $$
    and it has rank at most $\min\{m_1,\prod_{j\neq 1} m_j\}=d_1$. Considering other modes in the same way allows us to conclude that $\mathcal{M}(m_1,\ldots,m_r)\subseteq \overline{\cT}(d_1,\ldots,d_r)$. 
    
    To prove the opposite inclusion, suppose $T\in \overline{\cT}(d_1,\ldots,d_r)$. Then $T\in {\cT}(d_1',\ldots,d_r')$ for some $d_1'\leq d_1$, \ldots, $d_r'\leq d_r$. By Lemma~\ref{lem:kressner}, $T=\bs U'\cdot  S'$ where $S'\in \R^{d_1'\times\cdots \times d_r'}$ and $U_k'\in \R^{n_k\times d_k'}$ is a column orthogonal matrix, for $k \in [r]$. Since $d_k'\leq d_k \le  m_k$ we can complete each $U_k'$ to $\tilde{U}_k\in \R^{n_k\times m_k}$ (append an orthonormal complement) and embed $S'$ to $S\in \U$ by adding zero entries so that $T=\bs A\cdot  S$ showing that $T\in \mathcal{M}(m_1,\ldots,m_r)$.
\end{proof}

The following example will reappear later.
\begin{ex}\label{ex:simple}
    Consider order three tensors with $(n_1,n_2,n_3)=(6,3,3)$ and $(m_1,m_2,m_3)=(5,2,2)$. Then $(d_1,d_2,d_3)=(4,2,2)$, and thus $\dim(\mathcal{M}(5,2,2))=\dim(\cT(4, 2, 2))=28$ with codimension $54-28=26$.
\end{ex}

Note that, by construction, the tensors in the probabilistic Tensor PCA model, concentrate around the corresponding Tucker variety whose dimension is provided in Proposition~\ref{prop:tuckervar}. We next contrast this dimension formula with the dimension of the tensor PCA model. 
\begin{prop}
    The dimension of the probabilistic tensor PCA model in (\ref{eq:XPPCA}) is 
    $$
    2+\sum_{k=1}^r \left(n_k m_k-\binom{m_k}{2}-1\right)\qquad\mbox{or }\qquad 2+\sum_{k=1}^r \left(n_k m_k-\binom{m_k}{2}-1\right)+\prod_{k=1}^r m_k
    $$
    depending on whether the mean tensor $\nu$ is assumed to be zero or not. 
\end{prop}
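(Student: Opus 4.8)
The plan is to count dimensions parameter-by-parameter, after first fixing the parametrization via the normalization assumptions, and then subtracting off the continuous indeterminacies that the identifiability analysis already identified. First I would recall that the model is completely specified by the distribution of $X$, which by Lemma~\ref{lem:Sigma} is Gaussian with mean $\mu=\bs A\cdot\nu$ and covariance $\Sigma=\sigma^2\I_{\s v}+\sF_{\bs A}\sF_{\bs A}^*$. So the model dimension equals the dimension of the image of the parameter map $(A_1,\dots,A_r,\nu,\sigma^2)\mapsto(\mu,\Sigma)$, equivalently the dimension of the ambient parameter space minus the dimension of the generic fiber (the group orbit of parameters inducing the same distribution).

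Next I would count the raw parameters. The noise variance $\sigma^2$ contributes $1$; since $\Sigma$ also records the overall scale there is no loss there, but the global scaling indeterminacy among the $B_k=A_kA_k^\top$ (the $r$-tuple $(B_1,\dots,B_r)$ and $(\alpha_1B_1,\dots,\alpha_rB_r)$ agree when $\prod\alpha_k=1$) removes $r-1$ dimensions, which is why each mode effectively contributes one fewer. Concretely, working under Assumption~\ref{assp:colorthogonal} we may take $A_k=U_k\Lambda_k^{1/2}$ with $U_k\in\R^{n_k\times m_k}$ column-orthonormal; the Stiefel manifold of such $U_k$ has dimension $n_km_k-\binom{m_k+1}{2}=n_km_k-\binom{m_k}{2}-m_k$, and the eigenvalue part $\Lambda_k$ (equivalently the $m_k$ singular values squared) contributes $m_k$, for a total of $n_km_k-\binom{m_k}{2}$ per mode before removing scaling. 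Subtracting the single global scaling constraint (Assumption~\ref{assp:scale} pins it down) gives $\sum_k\bigl(n_km_k-\binom{m_k}{2}\bigr)-(r-1)=\sum_k\bigl(n_km_k-\binom{m_k}{2}-1\bigr)+1$. Adding the $\sigma^2$ coordinate yields $2+\sum_k\bigl(n_km_k-\binom{m_k}{2}-1\bigr)$, matching the $\nu=0$ case. When $\nu$ is a free parameter, under the full-column-rank assumption it is identifiable by Theorem~\ref{th:ident} and ranges over $\U$, contributing the extra $\prod_k m_k$.

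I would make this rigorous by invoking Theorem~\ref{th:ident}: the identifiability result says the generic fiber of the parameter map (after imposing Assumptions~\ref{assp:colorthogonal}--\ref{assp:nonscalar}) is a single point, so the image dimension equals the dimension of the \emph{normalized} parameter space, which is exactly the count above. Alternatively, one can argue directly: the constraint-free parameter space $\prod_k\R^{n_k\times m_k}\times\U\times\R_{\ge0}$ has dimension $\sum_k n_km_k+\prod_k m_k+1$, and the group acting with trivial stabilizer generically is $\bigl(\prod_k O(m_k)\bigr)\times\R^{r-1}$ (mode-wise orthogonal rotations of the core, dimension $\sum_k\binom{m_k}{2}$, plus the $(r-1)$-dimensional scaling torus on the $B_k$), but this double-counts because the scaling also moves $\nu$; one must check the net quotient dimension is $\sum_k\binom{m_k}{2}+(r-1)$. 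Subtracting gives the stated formula.

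The main obstacle is the bookkeeping around the scaling indeterminacy and its interaction with the $\nu$ parameter: one has to verify that the torus action $(B_k,\nu)\mapsto(\alpha_kB_k,\cdot)$ acts freely enough that it removes exactly $r-1$ dimensions and not more, that it is not already absorbed into the Stiefel/eigenvalue parametrization, and that combining it with the orthogonal action still yields a group acting with finite stabilizer generically — which is precisely what the $n_k=m_k$ non-scalar case in the proof of Theorem~\ref{th:ident} is designed to rule out as a degenerate locus. Once the generic-fiber-is-finite statement is in hand from Theorem~\ref{th:ident}, the dimension count itself is routine.
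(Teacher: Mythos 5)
Your proposal is correct and follows essentially the same route as the paper: invoke Theorem~\ref{th:ident} to conclude that the parameter map is one-to-one on the normalized parameter set, count $\sum_k\bigl(n_km_k-\binom{m_k}{2}\bigr)$ for the matrices under Assumption~\ref{assp:colorthogonal} (your Stiefel-plus-eigenvalues computation just makes explicit what the paper asserts), subtract $r-1$ for the scale constraints, add $1$ for $\sigma^2$, and add $\prod_k m_k$ when $\nu$ is free. The concern you raise about the scaling action interacting with $\nu$ is resolved by the structure of the identifiability argument itself — $\bs A$ and $\sigma^2$ are recovered from $\Sigma$ alone, and $\nu$ then contributes independent coordinates — so no further bookkeeping is needed.
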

\begin{proof} Suppose first that $\nu=0$. Consider the mapping from the parameter space to the space of the underlying distributions. This mapping is not one to one. The proof of Theorem~\ref{th:ident} implies that this mapping becomes one to one if we restrict this mapping to the set of $(\bs A,\nu,\sigma^2)$ satisfying Assumptions~\ref{assp:colorthogonal}, \ref{assp:scale}, and \ref{assp:nonscalar}. The set of matrices satisfying Assumption~\ref{assp:colorthogonal} has dimension $\sum_{k=1}^r(m_kn_k-\binom{m_k}{2})$. Fixing the scale in Assumption~\ref{assp:scale} further reduces this dimension by $r-1$ as we employ $r-1$ constraints $\|B_k\|_{\s{f}}=\|B_1\|_{\s{f}}$ for $k=2,\ldots,r$. Adding the additional dimension coming from the choice of $\sigma^2$ gives the first dimension formula. If $\nu$ is not fixed, an additional
$\prod_{k=1}^{r} m_k$ coordinates are free,
yielding the second dimension formula.
\end{proof}

For fair comparison we consider the deterministic
low-Tucker-rank factor model $X=\boldsymbol A\cdot\nu+\varepsilon$
\citep{chen2024semi}.  Assume $m_k\le\prod_{l\ne k}m_l$, so
the multilinear rank $(m_1,\dots,m_r)$ would be possible.  If the columns of each
$A_k$ are orthonormal, its parameter space has dimension
\[
  1+\sum_{k=1}^{r}\left(n_k m_k-\binom{m_k+1}{2}\right)
    +\prod_{k=1}^{r} m_k .
\]
Because TPCA augments this model with a
random Gaussian core $\bs A\!\cdot\!Z$, the version with a free mean
$\nu$ contains
$\sum_{k=1}^{r}m_k+1-r$
more parameters.
When $\nu$ is fixed to zero, TPCA instead has
$\prod_{k}m_k-\sum_{k}m_k+r-1$
fewer parameters than the deterministic factor model; the gap
can grow exponentially in the tensor order~$r$.

\section{Maximum likelihood estimator}\label{sec:EM}

Given that $X\in \V$ is Gaussian with mean $\mu$ and covariance $\Sigma$ satisfying \eqref{eq:Sigma}, it is natural to estimate the parameters $\nu,\sigma^2$, $B_1,\ldots,B_r$ in the tensor PPCA model by maximizing the log-likelihood of the data $X^{(1)},\ldots,X^{(N)}$, where $N$ is the sample size. Let $\sS_N=\tfrac1N\sum_{i=1}^N (X^{(i)} - \mu)\otimes (X^{(i)}-\mu)\in \cL(\V,\V)$. Then the Gaussian log-likelihood (up to some additive constants) is simply
\begin{equation}\label{eq:llike}
    \tfrac{N}{2}\log\det \Sigma^{-1}-\tfrac{N}{2}\tr(\sS_N \Sigma^{-1}),
\end{equation}
where we also recall from Section~\ref{sec:self-adjoint} that the trace and determinant on $\mathcal L(\V, \V)$ is defined through the mapping $\mat$.

\subsection{EM algorithm for tensor PPCA}

In this section we discuss the general EM algorithm for the tensor PPCA. Large parts of this follow the standard results; e.g. Section~12.2.2 in \cite{bishop2006pattern}. We use the fact that the joint distribution on $(X,Z)$ can be easily specified by first specifying the distribution of $Z$ and then the conditional distribution of $X$ given $Z$. The standard expression for the conditional mean of $Z$ given $X$ becomes
$$
\E(Z|X)\;=\;\Sigma_{\s{ZX}} \Sigma_{\s{XX}}^{-1}(X-\mu),
$$ 
where $\Sigma_{\s{XX}}$ is just $\Sigma$ given in \eqref{eq:Sigma} and $\mu=\bs A\cdot  \nu$. Writing $\Sigma_{\s{ZX}} \Sigma_{\s{XX}}^{-1}$ we mean a composition of two linear maps: a map $\Sigma_{\s{XX}}^{-1}\in \mathcal L(\V,\V)$, which is the inverse of $\Sigma_{\s{XX}}$ and the map $\Sigma_{\s{ZX}}\in \mathcal L(\V,\U)$. Moreover, for any $S\in \U$ and $T\in \V$
$$
\<\Sigma_{\s{ZX}}T,S\>\;=\;\cov(\<X,T\>,\<Z,S\>)\;=\;\cov(\<Z,\sF_{\bs A}^* T\>,\<Z,S\>)\;=\;\<\sF_{\bs A}^*  T,S\>,
$$
where we used the fact that $\Sigma_{\s{ZZ}}=\I_{\s{u}}$. In other words $\Sigma_{\s{ZX}}=F^*_{\mathbf A}$. We also note that the inverse of $\Sigma_{\s{XX}}=\sF_{\bs A}\sF^*_{\mathbf A} + \sigma^2 \I_{\s{v}}$ can be efficiently computed using the Woodbury identity. We have
\begin{equation}\label{eq:SXXinv}
\Sigma_{\s{XX}}^{-1}\;=\;(\sF_{\bs A}\sF^*_{\mathbf A} + \sigma^2 \I_{\s{v}})^{-1}\;=\;\tfrac{1}{\sigma^2}\left[\I_{\s{v}}-\sF_{\bs A} (\sF_{\bs A}^*\sF_{\bs A}+\sigma^2 \I_{\s{u}})^{-1}\sF_{\bs A}^*\right],    
\end{equation}
and the advantage of the expression on the right is that inverting the transformation 
\begin{equation}\label{eq:M_definition}
\sM\;:=\;\sF_{\bs A}^*\sF_{\bs A}+\sigma^2\I_{\s{u}}    
\end{equation}
can be done more efficiently than inverting the map $\sF_{\bs A}\sF_{\bs A}^* + \sigma^2 \I_{\s{v}}$ when $\prod_{k} m_k$ is much smaller than $\prod_{k} n_k$. 

With these reformulations, we can write
\begin{equation}\label{eq:EZX}
    \E(Z|X)\;=\;\sM^{-1} \sF_{\bs A}^*(X-\mu).
\end{equation}
Moreover, 
\begin{eqnarray*}
\var(Z|X)&=& \Sigma_{\s{ZZ}}-\Sigma_{\s{ZX}}\Sigma_{\s{XX}}^{-1}\Sigma_{\s{XZ}}\\
&=&\I_{\s{u}}-\sF^*_{\bs A}\left[\frac{1}{\sigma^2}(\I_{\s{v}}-\sF_{\bs A}\sM^{-1}\sF^*_{\bs A})\right]\sF_{\bs A}\\
&=&\I_{\s{u}}-\frac{1}{\sigma^2}\sF^*_{\bs A}\sF_{\bs A}+\frac{1}{\sigma^2}\sF^*_{\bs A}\sF_{\bs A}\sM^{-1}\sF^*_{\bs A}\sF_{\bs A}\\
&=&\I_{\s{u}}-\frac{1}{\sigma^2}\sM\sM^{-1}\sF^*_{\bs A}\sF_{\bs A}+\frac{1}{\sigma^2}\sF^*_{\bs A}\sF_{\bs A}\sM^{-1}\sF^*_{\bs A}\sF_{\bs A}\\
&=& \I_{\s{u}}-\sM^{-1}\sF^*_{\bs A}\sF_{\bs A}\;=\;\sigma^2 \sM^{-1}.
\end{eqnarray*}

The EM-algorithm starts by fixing some starting values for $\sigma^2_0, A_{0,1},\ldots,A_{0,r},\nu_0$ with $\mu_0=\bs A_0\cdot  \nu_0$. Then we proceed iteratively.

First, in the E-step, we compute $\sM_0$ using \eqref{eq:M_definition}, the conditional means from \eqref{eq:EZX}
\begin{equation}\label{eq:E1}
\bs E_i\;:=\;\E_{\theta_0}(Z^{(i)}|X^{(i)})\;=\;\sM^{-1}_0 \sF_{\mathbf A_0}^*(X^{(i)}-\mu_0)    \;\in \;\U,
\end{equation}
and the conditional covariance from the formula for $\var(Z|X)$ above
\begin{equation}\label{eq:E2}
\sV\;:=\;\sigma^2_0 \sM_0^{-1}\;\in\;\mathcal L(\U,\U).
\end{equation}
\begin{rem}\label{rem:impl-reg}
    Note that $\bs E_i$ can be equivalently characterized as the optimal $U$ in
    $$
    \|X^{(i)}-\mu_0-\sF_{\bs A_0}U\|^2_{\s{f}}+\sigma^2_0 \|U\|^2_{\s{f}},
    $$
    which corresponds to a ridge regression problem linking our problem to regularized Tucker decomposition.
\end{rem}
Second, in the M-step, we update $\mu$ and $\bs A$ by minimizing 
\begin{equation}\label{eq:M}
    \tfrac1N\sum_{i=1}^N\|X^{(i)}-\mu-\sF_{\bs A} \bs E_i\|^2_{\s{f}}+\tr(\sF_{\bs A}\sV \sF_{\bs A}^*).
\end{equation}
 Note that (\ref{eq:M}) is a block multi-convex function. Precisely, as a function of a single $A_k$, this is a strictly convex function. It admits a closed form optimizer as for the generalized ridge regression. Once $\bs A$ is updated, we update $\nu$ with the formula
\begin{equation}\label{eq:nu}
\nu\;=\;(\sF^*_{\bs A} \sF_{\bs A})^{-1}\sF^*_{\bs A}\left(\frac1N \sum_{i=1}^N (X^{(i)}-\sF_{\bs A}\bs E_i)\right).    
\end{equation}

Finally, $\sigma^2$ is updated via
\begin{equation}\label{eq:M2}
\sigma^2\;=\;\frac{1}{nN} \sum_{i=1}^N\|X^{(i)}-\mu-\sF_{\bs A} \bs E_i\|_{\s{f}}^2+\frac1n \tr(\sF_{\bs A}\sV \sF^*_{\bs A}),
\end{equation}
where $n=n_1\cdots n_r$ is the dimension of $\V$.

\begin{thm}
    The proposed procedure defined by equations \eqref{eq:E1}--\eqref{eq:M2} performs the EM algorithm for the proposed probabilistic tensor PCA.
\end{thm}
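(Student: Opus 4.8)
The plan is to verify the two properties that characterize an EM iteration for a latent‑variable model: (i) the E‑step assembles exactly the conditional expectations needed to form the expected complete‑data log‑likelihood $Q(\theta\mid\theta_0)$, and (ii) the M‑step maximizes $Q(\cdot\mid\theta_0)$. I would start from the complete‑data description behind Definition~\ref{def: PPCA_tensor}: $Z\sim\cN_\U(0,\I_{\s{u}})$, and conditionally on $Z$, $X\sim\cN_\V(\bs A\cdot\nu+\sF_{\bs A}Z,\ \sigma^2\I_{\s{v}})$ by \eqref{eq:XPPCA}. Since $\log p_\theta(X,Z)=\log p(Z)+\log p_\theta(X\mid Z)$ and $\log p(Z)$ is free of $\theta=(\bs A,\nu,\sigma^2)$, up to a $\theta$‑free constant $\log p_\theta(X^{(i)},Z^{(i)})=-\tfrac n2\log\sigma^2-\tfrac1{2\sigma^2}\|X^{(i)}-\bs A\cdot\nu-\sF_{\bs A}Z^{(i)}\|^2_{\s{f}}$, with $n=n_1\cdots n_r$.

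For the E‑step I would note that, under $\theta_0$, the pair $(X^{(i)},Z^{(i)})$ is jointly Gaussian, so $Z^{(i)}\mid X^{(i)}$ is Gaussian; its mean and covariance are precisely $\bs E_i$ of \eqref{eq:E1} and $\sV$ of \eqref{eq:E2} — this is exactly the computation (via the Woodbury identity \eqref{eq:SXXinv}) already carried out in the paragraphs preceding the algorithm, and $\sV$ is common to all $i$ and so is formed once. The only moments of $Z^{(i)}$ entering $\log p_\theta(X^{(i)},Z^{(i)})$ are the first moment $\bs E_i$ and the second moment $\E_{\theta_0}[\,Z^{(i)}\otimes Z^{(i)}\mid X^{(i)}\,]=\sV+\bs E_i\otimes\bs E_i$, so the E‑step outputs everything $Q$ needs.

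Next I would evaluate $Q(\theta\mid\theta_0)=\sum_{i=1}^{N}\E_{\theta_0}[\log p_\theta(X^{(i)},Z^{(i)})\mid X^{(i)}]$ explicitly. Expanding the Frobenius norm and taking conditional expectations term by term, the cross term contributes $-2\langle X^{(i)}-\bs A\cdot\nu,\ \sF_{\bs A}\bs E_i\rangle$, and for the quadratic‑in‑$Z$ term I would pass to the vectorization and use $\mat(\sF_{\bs A}^*\sF_{\bs A})=\mat(\sF_{\bs A})^\top\mat(\sF_{\bs A})$ together with cyclicity of the trace (Lemma~\ref{lem:matop}) and self‑adjointness of $\sV$ to obtain
\[
\E_{\theta_0}\!\bigl[\,\|\sF_{\bs A}Z^{(i)}\|^2_{\s{f}}\mid X^{(i)}\,\bigr]
=\tr\!\bigl(\sF_{\bs A}^*\sF_{\bs A}(\sV+\bs E_i\otimes\bs E_i)\bigr)
=\tr(\sF_{\bs A}\sV\sF_{\bs A}^{*})+\|\sF_{\bs A}\bs E_i\|^2_{\s{f}}.
\]
Collecting the three pieces gives $\E_{\theta_0}[\,\|X^{(i)}-\bs A\cdot\nu-\sF_{\bs A}Z^{(i)}\|^2_{\s{f}}\mid X^{(i)}\,]=\|X^{(i)}-\bs A\cdot\nu-\sF_{\bs A}\bs E_i\|^2_{\s{f}}+\tr(\sF_{\bs A}\sV\sF_{\bs A}^{*})$, hence, writing $\mu=\bs A\cdot\nu$,
\[
Q(\theta\mid\theta_0)=\mathrm{const}-\tfrac{nN}{2}\log\sigma^2-\tfrac{N}{2\sigma^2}\Bigl[\tfrac1N\sum_{i=1}^{N}\|X^{(i)}-\mu-\sF_{\bs A}\bs E_i\|^2_{\s{f}}+\tr(\sF_{\bs A}\sV\sF_{\bs A}^{*})\Bigr],
\]
so that the bracketed expression is exactly the M‑step objective \eqref{eq:M}.

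Finally, for the M‑step: for every fixed $\sigma^2>0$ the coefficient $-N/(2\sigma^2)$ is negative, so maximizing $Q(\cdot\mid\theta_0)$ over $(\bs A,\nu)$ is equivalent to minimizing \eqref{eq:M}, a problem not involving $\sigma^2$; the block multi‑convex structure of \eqref{eq:M} (strict convexity and a closed‑form generalized‑ridge solution in each $A_k$, and strict convexity in $\nu$) means the updates described, iterated over the $A_k$‑blocks, realize this minimization, with \eqref{eq:nu} the exact minimizer in $\nu$ for fixed $\bs A$ — obtained by orthogonally projecting the unconstrained mean optimizer $\tfrac1N\sum_i(X^{(i)}-\sF_{\bs A}\bs E_i)$ onto $\mathrm{Im}(\sF_{\bs A})$ via $(\sF_{\bs A}^*\sF_{\bs A})^{-1}\sF_{\bs A}^{*}$. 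Substituting the minimizing $(\bs A,\nu)$ back, $Q$ reduces to $\mathrm{const}-\tfrac{nN}{2}\log\sigma^2-\tfrac{N\widehat M}{2\sigma^2}$ with $\widehat M$ the minimal value of \eqref{eq:M}; setting $\partial Q/\partial\sigma^2=0$ gives $\sigma^2=\widehat M/n$, which is precisely \eqref{eq:M2}. Combining (i) and (ii) shows that \eqref{eq:E1}–\eqref{eq:M2} is the EM algorithm for \eqref{eq:XPPCA}. The only step beyond bookkeeping is the explicit evaluation of $Q$ — in particular reducing $\E_{\theta_0}[\|\sF_{\bs A}Z^{(i)}\|^2_{\s{f}}\mid X^{(i)}]$ through the second conditional moment $\sV+\bs E_i\otimes\bs E_i$ and the operator trace identities — together with the mild point that, because the mean is constrained to $\mathrm{Im}(\sF_{\bs A})$, the $\nu$‑update must be recognized as an orthogonal projection rather than an unconstrained least‑squares solution.
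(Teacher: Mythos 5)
Your proposal is correct and follows essentially the same route as the paper's proof: factor the complete-data likelihood through $p(Z)\,p_\theta(X\mid Z)$, identify $\bs E_i$ and $\sV$ as the conditional mean and covariance, reduce the conditional expectation of the quadratic terms via $\E_{\theta_0}[\|\sF_{\bs A}Z\|^2_{\s{f}}\mid X]=\tr(\sF_{\bs A}\sV\sF_{\bs A}^*)+\|\sF_{\bs A}\bs E_i\|^2_{\s{f}}$, and recognize \eqref{eq:M} and \eqref{eq:M2} as the resulting M-step. The only (harmless) difference is that you discard the $\theta$-free term $\E_{\theta_0}[\|Z\|^2_{\s{f}}\mid X]$ up front and spell out the $\nu$-projection and the $\sigma^2$ profiling a bit more explicitly than the paper does.
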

\begin{proof}The calculations are similar to classical derivations; see Section~12.2.2 in \cite{bishop2006pattern}.
    The joint distribution of $X,Z$ is conveniently expressed in terms of the marginal distribution of $Z$ and the conditional distribution of $X$ given $Z$: $Z\sim N(0,\I_{\s{u}})$ and $X|Z\sim N\left(\sF_{\bs A} (\nu+Z),\sigma^2 \I_{\s{v}}\right)$. Both distributions have isotropic covariances, which gives the joint log-likelihood of the form
$$
\log p(X,Z)\;=\;{\rm const}-\tfrac{1}{2}\|Z\|_{\s{f}}^2-\tfrac{n}{2}\log \sigma^2-\tfrac{1}{2\sigma^2}\|X-\mu-\sF_{\bs A} (Z)\|_{\s{f}}^2,
$$
where the term ${\rm const}$ is actually $-\frac{(n+m)}{2}\log (2\pi)$ with $m = m_1 \ldots m_r$ and $n = n_1 \ldots n_r$, and it does not depend on the data or parameters $\theta=(\sigma^2,\nu,\bs A)$. The joint log-likelihood is then given as $\ell(\theta)=\sum_{i=1}^N\log p(X^{(i)},Z^{(i)})$. The conditional expectation of the log-likelihood, given the observed data $\mathcal D=\{X^{(1)},\ldots,X^{(N)}\}$ {and a fixed parameter vector $\theta_0=(\sigma_0^2,\nu_0,\bs A_0)$} satisfies
\begin{eqnarray*}
\E_{\theta_0}(\ell|\mathcal D)&=&{\rm const}-\tfrac12\sum_{i=1}^N\E_{\theta_0}\left(\|Z^{(i)}\|_{\s{f}}^2|X^{(i)}\right)-\tfrac{nN}{2}\log \sigma^2-\tfrac{1}{2\sigma^2}\sum_{i=1}^N\|X^{(i)}-\mu\|_{\s{f}}^2\\
&+&\tfrac{1}{\sigma^2}\sum_{i=1}^N \left\<X^{(i)}-\mu,\sF_{\bs A} \E_{\theta_0}(Z^{(i)}|X^{(i)})\right\> - \tfrac{1}{2\sigma^2}\sum_{i=1}^N\E_{\theta_0} \left(\|\sF_{\bs A} Z^{(i)}\|_{\s{f}}^2|X^{(i)}\right).  
\end{eqnarray*}
 Using basic algebra, we get
\begin{eqnarray*}
\E_{\theta_0}\left(\|Z\|_{\s{f}}^2|X\right)&=& \E_{\theta_0}\left(\|Z-\E_{\theta_0}(Z|X)\|_{\s{f}}^2|X\right)+\left\|\E_{\theta_0}(Z|X)\right\|_{\s{f}}^2\\
&=& \tr\left(\var_{\theta_0}(Z|X)\right)+\left\|\E_{\theta_0}(Z|X)\right\|_{\s{f}}^2.
\end{eqnarray*}
Moreover, it holds that
\begin{eqnarray*}
\E_{\theta_0}\left(\|\sF_{\bs A} Z\|_{\s{f}}^2|X\right)&=&\E_{\theta_0}\left(\<\sF_{\bs A} Z,\sF_{\bs A} Z\>|X\right)
\\&=&\tr\left(\sF_{\bs A} \var_{\theta_0}(Z|X)\sF_{\bs A}^*\right)+\left\|\sF_{\bs A} \E_{\theta_0}(Z|X)\right\|_{\s{f}}^2.
\end{eqnarray*}
This allows to rewrite the expected full log-likelihood as
\begin{eqnarray*}
\E_{\theta_0}(\ell|\mathcal D)&=&{\rm const}-\tfrac12\sum_{i=1}^N\|\bs E_i\|_{\s{f}}^2-\tfrac{nN}{2}\log \sigma^2-\tfrac{1}{2\sigma^2}\sum_{i=1}^N\|X^{(i)}-\mu-\sF_{\bs A} \bs E_i\|_{\s{f}}^2\\
&-&\tfrac{N}{2}\tr(\sV)-\tfrac{N}{2\sigma^2}\tr(\sF_{\bs A}\sV \sF_{\bs A}^*). 
\end{eqnarray*}
Optimizing this with respect to $\bs A$ is equivalent to minimizing \eqref{eq:M}. Optimizing with respect to $\sigma^2$ leads to the formula in \eqref{eq:M2}.
\end{proof}
We provide some compotational details for implementing the EM algorithm in Appendix~\ref{sec:comp_details}.

\subsection{Existence of the MLE}

We start by noting that in the standard probabilistic PCA (PPCA) applied to tensor data (after vectorization), the sample size needed for the existence of the MLE may be large.
\begin{prop}
    The MLE for standard PPCA model in \eqref{eq:naive} exists (with probability one) if and only if the sample size $N$ satisfies $N>m_1\cdots m_r$.
\end{prop}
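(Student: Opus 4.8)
The plan is to pass to vectorized coordinates and reduce to the classical PPCA picture. By Proposition~\ref{prop:allold} together with Lemma~\ref{lem:matop}, setting $W:=\mat(\sF)\in\R^{n\times m}$ (with $n=n_1\cdots n_r$, $m=m_1\cdots m_r$) and $\Sigma=\sigma^2\I_{\s{v}}+\sF\sF^*$, the model~\eqref{eq:naive} becomes the vector PPCA model on $\R^{n}$ with covariance matrix $\mat(\Sigma)=\sigma^2 I_n+WW^\top$, and the log-likelihood~\eqref{eq:llike} equals $\tfrac N2\log\det(\sigma^2 I_n+WW^\top)^{-1}-\tfrac N2\tr\!\big(S_N(\sigma^2 I_n+WW^\top)^{-1}\big)$, where $S_N:=\mat(\sS_N)$ is the (centered) sample second-moment matrix; write its eigenvalues as $\ell_1\ge\cdots\ge\ell_n\ge 0$. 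As $\sF$ ranges over $\cL(\U,\V)$, the matrix $W$ ranges over all of $\R^{n\times m}$, so the parameter set is the open set $(0,\infty)\times\R^{n\times m}$. I would first record that $\mathrm{rank}(S_N)=\min(N,n)$ with probability one: the $X^{(i)}-\mu$ are i.i.d.\ draws from an absolutely continuous law, hence a.s.\ linearly independent (the failure set is a finite union of proper affine subspaces, Lebesgue-null), and $\mathrm{rank}\big(\sum_i v_iv_i^\top\big)=\dim\mathrm{span}\{v_i\}$. Assuming $n>m$ (some $n_k>m_k$; otherwise the model is a saturated Gaussian and there is nothing to prove), it then suffices to show that the supremum of the above function is attained if and only if $\mathrm{rank}(S_N)\ge m+1$, equivalently $N\ge m+1$.

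For \emph{necessity} I would argue by contraposition, using only that $\mathrm{rank}(S_N)\le N\le m$ when $N\le m$ (no probability needed here). Choose $W_0\in\R^{n\times m}$ with $W_0W_0^\top=S_N$ (take scaled eigenvectors of $S_N$ and pad with zero columns; possible since $\mathrm{rank}(S_N)\le m$) and follow the path $\Sigma_t=tI_n+W_0W_0^\top$ as $t\downarrow 0$. Then $-\log\det\Sigma_t=-(n-\mathrm{rank}(S_N))\log t-\sum_{j:\ell_j>0}\log(t+\ell_j)\to+\infty$ at rate $\log(1/t)$, while $\tr(S_N\Sigma_t^{-1})=\sum_{j:\ell_j>0}\ell_j/(t+\ell_j)\to\mathrm{rank}(S_N)<\infty$ remains bounded, so the log-likelihood diverges to $+\infty$ and no maximizer exists.

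For \emph{sufficiency} assume $N>m$, so $\mathrm{rank}(S_N)\ge m+1$ a.s. The plan is to show the objective is continuous and tends to $-\infty$ as $(\sigma^2,W)$ leaves every compact subset of $(0,\infty)\times\R^{n\times m}$, which forces a maximizer at an interior point. Two of the three escape regimes are routine: $\sigma^2\to\infty$, where $-\log\det\Sigma\le -n\log\sigma^2\to-\infty$ and $-\tr(S_N\Sigma^{-1})\le 0$; and $\|W\|\to\infty$, where the largest eigenvalues of $WW^\top$ force $-\log\det\Sigma\to-\infty$ while $-\tr(S_N\Sigma^{-1})\le 0$. The crucial regime is $\sigma^2\to 0$: since $\mathrm{rank}(WW^\top)\le m$, $\ker(WW^\top)$ has dimension $\ge n-m$, whereas $\mathrm{rank}(S_N)\ge m+1$ forces $\dim\ker(S_N)\le n-m-1$, so $\ker(WW^\top)\not\subseteq\ker(S_N)$; hence the compression of $S_N$ to $\ker(WW^\top)$ is nonzero and $\tr(S_N\Sigma^{-1})\ge c/\sigma^2$ for some $c>0$, which dominates $-\log\det\Sigma\le-(n-m)\log\sigma^2$, so the log-likelihood $\to-\infty$. (Alternatively, one may quote the Tipping--Bishop closed form: profiling out $W$ for fixed $\sigma^2$ places $WW^\top$ on the top-$m$ eigenspace of $S_N$, and the profiled objective in $\sigma^2$ is maximized at $\hat\sigma^2=\tfrac1{n-m}\sum_{j=m+1}^{n}\ell_j$, which is $>0$ exactly when $\ell_{m+1}>0$, i.e.\ $\mathrm{rank}(S_N)\ge m+1$; the attained value is the finite global maximum.)

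The main obstacle is the $\sigma^2\to 0$ case of sufficiency: this is precisely where the ``$\det\Sigma\to 0$ faster than $\tr(S_N\Sigma^{-1})\to\infty$'' pathology could in principle occur, and where the rank hypothesis is doing all of the work. The delicate technical point is making the constant $c$ in the bound uniform over the relevant range of $W$ rather than holding only for one fixed $W$; I would handle this by first restricting attention to a superlevel set $\{\ell\ge\ell(W_*,\sigma_*^2)\}$ for an arbitrary interior reference point and invoking compactness there, or by reducing to the one-dimensional profiled problem via the Tipping--Bishop step, which sidesteps the uniformity issue entirely. The almost-sure rank statement for $S_N$ is a standard ``zero set of a nontrivial polynomial has measure zero'' argument and I would treat it as routine.
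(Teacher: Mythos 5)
Your argument reaches the right conclusion by a genuinely different route from the paper: the paper's proof is a one-liner that vectorizes and quotes the closed-form PPCA solution of \cite{tipping1999probabilistic} (the MLE of $\sigma^2$ is the average of the $n-m$ smallest eigenvalues of the sample covariance, which is generically positive iff the stated sample-size condition holds), whereas you re-derive existence and nonexistence directly from the boundary behaviour of the likelihood. Your necessity construction (set $WW^\top=S_N$, let $\sigma^2\downarrow 0$, and watch the likelihood diverge to $+\infty$) is sound, and the direct coercivity argument for sufficiency is a nice self-contained alternative that does not depend on knowing the closed form. Two points. First, the uniformity worry you flag in the $\sigma^2\to 0$ regime has a cleaner resolution than either workaround you propose: letting $P$ denote the orthogonal projection onto $\ker(WW^\top)$, which has rank at least $n-m$, the Ky Fan minimum principle gives $\tr(PS_NP)\ge \ell_{m+1}+\cdots+\ell_n\ge\ell_{m+1}>0$ \emph{uniformly} over all $W$, so $\tr(S_N\Sigma^{-1})\ge \ell_{m+1}/\sigma^2$ with a constant independent of $W$; this also handles boundary sequences in which $\sigma^2\to0$ and $\|W\|\to\infty$ simultaneously, which your regime-by-regime treatment does not literally cover. (Your first proposed fix --- restricting to a superlevel set and ``invoking compactness there'' --- is circular as phrased, since compactness of that set is exactly what coercivity is supposed to deliver; your second fix, profiling via Tipping--Bishop, is fine and is essentially the paper's proof.) Second, you compute $\operatorname{rank}(S_N)=\min(N,n)$ by treating $\mu$ as fixed, but once $\mu$ is profiled out to the sample mean the centered sample covariance has rank $\min(N-1,n)$ almost surely; this off-by-one subtlety is inherited from the paper's own statement and proof, so it is not a defect specific to your argument, but it is worth being aware that the clean threshold depends on whether $\mu$ is estimated.
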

\begin{proof}
This PPCA problem corresponds to the standard PPCA with $x\in \R^n$, $x=\mu+Az+\epsilon$, $z\sim N_m(0, I_m)$ and $\epsilon \sim N_n(0, \sigma^2 I_n)$. Where $n=n_1\cdots n_r$ and $m=m_1\cdots m_r$. Let $\sS_N$ be the sample covariance matrix.     The MLE is given in closed form; see \cite{tipping1999probabilistic} or Section~7.3 of \cite{anderson1956statistical}. In particular, $\widehat\mu$ is the sample average, and $\widehat\sigma^2$ is obtained as the average of $n-m$ smallest eigenvalues of the sample covariance matrix. This is generically positive if and only if $N>m$. The MLE for all other parameters is always well-defined.  
\end{proof}

On the other hand, under mild conditions on the underlying dimensions, in the proposed probabilistic tensor PCA model, the MLE exists even with a single sample.

\begin{thm}\label{th:MLE1}
   Suppose $n_k>m_k$ for at least one $k \in [r]$. Then a single sample is enough for the MLE in the proposed probabilistic tensor PCA model to exists with probability 1. 
\end{thm}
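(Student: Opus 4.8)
The plan is to locate the maximum of the one-sample Gaussian log-likelihood directly. Fix the single observation $X \equiv X^{(1)}$ and write $\Sigma = \Sigma(\bs A,\sigma^2) = \sigma^2\I_{\s{v}} + \sF_{\bs A}\sF_{\bs A}^*$; taking the mean to be zero (consistent with the paper's standing convention; a structured mean would be profiled out first), the log-likelihood to maximize over $\sigma^2>0$ and $\bs A$ is $\ell(\bs A,\sigma^2) = \tfrac12\log\det\Sigma^{-1} - \tfrac12\langle\Sigma^{-1}X,X\rangle$, which is continuous on $\{\sigma^2>0\}$. By Corollary~\ref{cor:specSig}, the eigenvectors of $\Sigma$ split into those lying in $\mathrm{Im}(\sF_{\bs A})$, with eigenvalues $\sigma^2+\ell_i$ (the $\ell_i$ being the nonzero eigenvalues of $\sF_{\bs A}\sF_{\bs A}^*$), and those orthogonal to $\mathrm{Im}(\sF_{\bs A})$, with eigenvalue exactly $\sigma^2$. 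Writing $X = X_\parallel + X_\perp$ for the Frobenius-orthogonal split relative to $\mathrm{Im}(\sF_{\bs A})$, this gives $\log\det\Sigma = (n-m')\log\sigma^2 + \sum_i\log(\sigma^2+\ell_i)$ with $m'=\mathrm{rank}(\sF_{\bs A})$, and $\langle\Sigma^{-1}X,X\rangle = \langle\Sigma^{-1}X_\parallel,X_\parallel\rangle + \|X_\perp\|_{\s{f}}^2/\sigma^2 \ge \|X_\perp\|_{\s{f}}^2/\sigma^2$. The proof then rests on two ingredients: a uniform lower bound on $\|X_\perp\|_{\s{f}}$, and coercivity of $\ell$.

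For the first ingredient, observe that $\bigcup_{\bs A}\mathrm{Im}(\sF_{\bs A})$ is exactly the Tucker variety $\mathcal{M}(m_1,\dots,m_r) = \overline{\cT}(d_1,\dots,d_r)$ of Proposition~\ref{prop:tuckervar}. Under the hypothesis $n_{k_0}>m_{k_0}$ for some $k_0$, this is a proper algebraic subvariety of $\V$ (one verifies via the dimension formula of Proposition~\ref{prop:tuckervar} that its dimension is strictly below $n=\prod_k n_k$, equivalently that a generic tensor has mode-$k$ multilinear rank exceeding $d_k$ for some $k$), hence Lebesgue-null; since the law of $X$ is absolutely continuous, with probability one $X\notin\mathcal{M}(m_1,\dots,m_r)$. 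Therefore $\delta := \mathrm{dist}_{\s{f}}(X,\mathcal{M}(m_1,\dots,m_r)) > 0$, and because $\mathrm{Im}(\sF_{\bs A})\subseteq\mathcal{M}(m_1,\dots,m_r)$ for every $\bs A$ we get the uniform bound $\|X_\perp\|_{\s{f}} \ge \delta$, valid for all $\bs A$.

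For the second ingredient, I show $\ell$ tends to $-\infty$ along every sequence escaping the (gauge-reduced) parameter space, so that a maximizing sequence stays in a compact set on which $\ell$ is continuous, yielding a maximizer. Using $\Sigma\succeq\sigma^2\I_{\s{v}}$ we have $\log\det\Sigma^{-1}\le -n\log\sigma^2$, so $\ell \le -\tfrac12\bigl[n\log\sigma^2 + \delta^2/\sigma^2\bigr]$, which is bounded above and tends to $-\infty$ both as $\sigma^2\to0$ and as $\sigma^2\to\infty$. If, after fixing the scaling gauge of Assumption~\ref{assp:scale} so that the $\|B_k\|_{\s{f}}$ are common and equal to some $t$, one has $t\to\infty$ (equivalently some $\|A_k\|\to\infty$), then the top eigenvalue $\ell_1 = \prod_k\|A_k\|^2 \ge t^{r}/\prod_k\sqrt{m_k}\to\infty$, whence $\det\Sigma \ge (\sigma^2)^{\,n-1}(\sigma^2+\ell_1)$ and $\ell \le -\tfrac12\bigl[(n-1)\log\sigma^2 + \log\ell_1\bigr]\to-\infty$ once $\sigma^2$ is confined to a compact subinterval of $(0,\infty)$ by the previous step; the residual orthogonal/sign gauge of Assumption~\ref{assp:colorthogonal} is compact and harmless. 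Hence the supremum is attained.

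The main obstacle is the almost-sure statement $X\notin\mathcal{M}(m_1,\dots,m_r)$ — that is, turning the dimension hypothesis $n_{k_0}>m_{k_0}$ into properness of the Tucker variety via Proposition~\ref{prop:tuckervar}. This is exactly the structural phenomenon responsible for a single sample sufficing, in sharp contrast with the vectorized PPCA of the preceding proposition, which requires $N>\prod_k m_k$: there, every realization already lies in the relevant signal subspace when $N\le m$. The remaining steps — the spectral splitting, which is already available from Corollary~\ref{cor:specSig}, and the coercivity estimates — are routine once the uniform residual bound $\|X_\perp\|_{\s{f}}\ge\delta$ is in hand, the only point requiring care being that the coercivity bounds must be uniform in $\bs A$, which is precisely what the bound through $\delta$ provides.
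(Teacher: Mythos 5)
Your proof is correct and rests on the same key structural fact as the paper's: by Proposition~\ref{prop:tuckervar} the union of all images $\mathrm{Im}(\sF_{\bs A})$ is the Tucker variety, which is proper when some $n_k>m_k$, so the single observation lies off it almost surely and the residual distance $\delta>0$ prevents degeneration at $\sigma^2\to 0$; the remaining coercivity estimates as $\|\Sigma\|\to\infty$ match the paper's as well. The one genuine (if modest) difference is how the lower bound on $\sigma^2$ along maximizing sequences is obtained: the paper routes it through the EM update \eqref{eq:M2}, arguing that the hill-climbing property lets one restrict to $\sigma^2\ge\frac1n\,\mathrm{dist}^2(X,\overline{\cT})$, whereas you derive it directly from the likelihood via the spectral split of Corollary~\ref{cor:specSig} and the inequality $\langle\Sigma^{-1}X,X\rangle\ge\|X_\perp\|_{\s{f}}^2/\sigma^2$, giving $\ell\le-\tfrac12[n\log\sigma^2+\delta^2/\sigma^2]\to-\infty$ as $\sigma^2\downarrow0$. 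Your version is arguably more self-contained, since it establishes coercivity of the likelihood itself rather than appealing to properties of a particular optimization algorithm; the paper's version has the advantage of tying the existence argument to the estimator it actually computes. The only omission is the divergent-mean case ($\|\mu\|\to\infty$), which the paper treats explicitly and you dismiss by setting $\mu=0$; since the model in Section~\ref{sec:tpca} allows $\mu=\bs A\cdot\nu$, a one-line remark that $-\langle\Sigma^{-1}(X-\mu),X-\mu\rangle\to-\infty$ when $\|\mu\|\to\infty$ with $\Sigma$ confined to a compact set would close that gap.
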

\begin{proof}
    Let $X\in \V$ be the single observation from our model. By Proposition~\ref{prop:tuckervar}, the Tucker variety $\overline{\cT}(d_1,\ldots,d_r)$ describes all tensors that can be written as $\bs A\cdot  U$ for some $\bs A$ and some $U\in \U$. Here $d_k=\min\{m_k,\prod_{l \neq k}m_l\}$. If one $m_k<n_k$, then $d_k<n_k$ and it is always a proper subvariety of $\V$; see Section~\ref{sec:tuckerdim}. Therefore, with probability $1$, a random $X$ will lie outside of this variety. Recall that the EM-algorithm is a hill climbing procedure. The update \eqref{eq:M2} implies that without loss of generality, optimizing the likelihood, we can assume
    $$
    \sigma^2\;\geq\;\frac1n {\rm dist}^2\left(X,\overline{\cT}(d_1,\ldots,d_r)\right)\;>\;0.
    $$
    Denote this bound by $\sigma_*^2$. This shows that the EM algorithm can restrict $\widehat{\Sigma}$ to the set where $\Sigma\succeq \sigma_*^2\I_{\s{v}}$ (recall that eigenvalues of $\Sigma$ are greater or equal to $\sigma^2$ by Corollary~\ref{cor:specSig}). Since this condition defines a closed (random) subset of $\S^n_+$, the resulting values of the parameters $(\nu,\bs A,\sigma^2)$ also lie in the closed set as the  mapping \eqref{eq:Sigma} from the parameters to $\Sigma$ is continuous. 
    Since this set is not bounded, the optimum could still not be achieved. To conclude that the MLE exists, we note that $\|\Sigma\|\to\infty$ implies that the log-likelihood in \eqref{eq:llike} (here $N=1$) diverges to $-\infty$. Indeed, if the maximal eigenvalue of $\Sigma$ diverges (recall that the minimal eigenvalue is guaranteed to be at least $\sigma^2_*$) then $\log\det\Sigma^{-1}\to -\infty$ while  the term $-\tr(\sS_N\Sigma^{-1})$ is always non-positive. Similarly, if $\|\mu\|\to \infty$, then $-\tr(\sS_N\Sigma^{-1})\to -\infty$ almost surely and $\log\det\Sigma^{-1}$ is bounded above.
\end{proof}

The problem of existence is separate from the problem of the quality of the maximum likelihood estimator. For small or moderate sample sizes we expect that the MLE is not a good estimator and some forms of regularization must be necessary. We also note that for small samples the MLE may be quite degenerate.

\begin{ex}
Recall the toy setting of Example~\ref{ex:simple}:
$(n_1,n_2,n_3)=(6,3,3)$ and $(m_1,m_2,m_3)=(5,2,2)$.
We drew one or more i.i.d.\ observations from the TPCA model with
$\nu=0$ and factor matrices
$A_1\in\mathbb R^{6\times5}$, $A_2,A_3\in\mathbb R^{3\times2}$
generated at random (columns orthogonal, then rescaled). With a \textbf{single} tensor ($N=1$) the numerical maximizer
$\widehat A_1$ produced by the EM algorithm always had
${\rm rank}(\widehat A_1)=3$ in 1,000 independent trials.
Once a second sample was added ($N\ge2$) the estimated rank jumped to
the full possible value~$5$ almost surely. From the closed-form update in appendix \ref{sec:comp_details},
\[
\widehat A_k
=\Bigl(\tfrac1N\sum_{i=1}^{N}\cM_{k}(X^{(i)})\,U_{k,i}^{\top}\Bigr)
\Bigl(
W_k+\tfrac1N\sum_{i=1}^{N}U_{k,i}U_{k,i}^{\top}
\Bigr)^{-1}
\qquad\text{(see~\eqref{eq: solution_A_k})},
\]
take $k=1$ and $N=1$.  Here
$U_{1,1}=\cM_{1}\bigl(\bs A^{(-1)}E_{1}\bigr)$ is a
$5\times(3\cdot3)=5\times9$ matrix, but
$\operatorname{rank}(U_{1,1})\le m_2 m_3 = 4$.
Consequently the first factor in the above display has rank at most~$4$, hence
$\operatorname{rank}(\widehat A_1)\le4$.  The simulations suggest that the generic rank is
in fact~$3$. Understanding precisely \emph{which} ranks occur for small $N$,
and how they depend on $(n_k,m_k)$'s, seems an interesting open
problem.
\end{ex}

\section{Rank-1 power iteration algorithm}\label{sec:algorithm}

In this section, we propose a novel rank-1 power iteration algorithm as an alternative procedure for estimating the model parameters. The main motivation is for efficient computation while addressing the potential computational-statistical gap. The proposed algorithm fully exploits both the low-rank structure of $\pair(\Sigma)$ as a tensor (cf.\ Section~\ref{sec:tensorcomp}) and the low-rank structure of its factors as matrices after appropriate reshaping. From the implementation perspective, it directly leverages the low-rank decomposition of the sample Kronecker covariance tensor, making it computationally efficient. Similar to spectral methods, it is a model-free procedure, but unlike those methods, it yields estimators of the $\widehat{A}_k$’s themselves rather than just their column spaces. Theoretically, we show that under the probabilistic tensor PCA model, this augmented rank-1 power iteration algorithm converges and provides consistent estimators—even after the first iteration. For ease of presentation, we consider the case $\mu = 0$ in this section.

\subsection{Rank-1 power iteration}

\textbf{Exploiting the structure in $\pair(\Sigma)$.} 
To begin with, we take a closer look at the covariance in the probabilistic tensor PCA model. 
\begin{prop}
\label{prop: Sigma}
    Suppose $\Sigma\in \mathcal L(\V,\V)$ is a covariance for the probabilistic tensor PCA model with parameters $\sigma^2,\bs A=(A_1,\ldots,A_r)$, then $\pair(\Sigma)\in \R^{n_1^2\times \cdots \times n_r^2}$ satisfies
    $$
    \pair(\Sigma)\;=\;  \vec(A_1A_1^\top)\otimes \cdots\otimes \vec(A_rA_r^\top)+\sigma^2 \vec(I_{n_1})\otimes \cdots\otimes \vec(I_{n_r}).
    $$
\end{prop}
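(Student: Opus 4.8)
The plan is to reduce the identity to two facts already at our disposal: first, that $\Sigma = \sigma^2\I_{\s v} + \sF_{\bs A}\sF_{\bs A}^*$ by Lemma~\ref{lem:Sigma}, and second, the behaviour of the $\pair(\cdot)$ operator on Tucker maps. Since $\pair$ is linear, it suffices to compute $\pair(\I_{\s v})$ and $\pair(\sF_{\bs A}\sF_{\bs A}^*)$ separately and add them.

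For the identity map $\I_{\s v}\in\cL(\V,\V)$, its $2r$-tensor representation has entries $(\I_{\s v})_{i_1,\ldots,i_r,j_1,\ldots,j_r}=\prod_k \delta_{i_k j_k}$. Applying the definition of $\pair$, the entry indexed by $\overline{i_1,j_1},\ldots,\overline{i_r,j_r}$ equals $\prod_k\delta_{i_k j_k}$, which is exactly the $\overline{i_1,j_1},\ldots,\overline{i_r,j_r}$ entry of $\vec(I_{n_1})\otimes\cdots\otimes\vec(I_{n_r})$, using that $\vec(I_{n_k})_{\overline{i_k,j_k}} = \delta_{i_k j_k}$ and that the Kronecker-type product over modes multiplies the per-mode entries. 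This handles the $\sigma^2$ term.

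For the main term, recall from Section~\ref{sec:spectrum} that $\sF_{\bs A}\sF_{\bs A}^* = \sF_{\bs B}$ is itself the Tucker map associated with $\bs B=(B_1,\ldots,B_r)$, $B_k=A_kA_k^\top$. So it is enough to establish the general fact that for a Tucker map $\sF_{\bs C}$ with $\bs C=(C_1,\ldots,C_r)$ one has $\pair(\sF_{\bs C}) = \vec(C_1)\otimes\cdots\otimes\vec(C_r)$. This is essentially the ``only if'' direction of Proposition~\ref{prop:spmap} made explicit: from the definition of the Tucker map, $(\sF_{\bs C})_{i_1,\ldots,i_r,j_1,\ldots,j_r} = \prod_k (C_k)_{i_k j_k}$, and feeding this into the definition of $\pair$ gives entry $\prod_k (C_k)_{i_k j_k}$ at position $\overline{i_1,j_1},\ldots,\overline{i_r,j_r}$, which is precisely $\bigl(\vec(C_1)\otimes\cdots\otimes\vec(C_r)\bigr)_{\overline{i_1,j_1},\ldots,\overline{i_r,j_r}}$ since $\vec(C_k)_{\overline{i_k,j_k}}=(C_k)_{i_k j_k}$. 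Substituting $C_k=B_k=A_kA_k^\top$ and adding the $\sigma^2$ contribution yields the claimed formula.

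None of the steps poses a real obstacle; the only thing requiring care is bookkeeping of the index ordering, i.e.\ making sure that the ranking convention $\overline{i_k,j_k}$ used inside $\pair$ matches the one implicit in $\vec(C_k)$ and in the tensor product $\otimes$ over modes, so that the per-mode factors line up consistently. Once the convention is pinned down as in Section~\ref{sec:tensorcomp}, the proof is a one-line entrywise verification followed by linearity. I would present it by first stating and checking the Tucker-map lemma $\pair(\sF_{\bs C})=\vec(C_1)\otimes\cdots\otimes\vec(C_r)$, then invoking Lemma~\ref{lem:Sigma}, the identity $\sF_{\bs A}\sF_{\bs A}^*=\sF_{\bs B}$, the value of $\pair(\I_{\s v})$, and linearity of $\pair$.
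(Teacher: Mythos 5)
Your proposal is correct and is essentially the paper's own argument: the paper's proof simply says the result "follows immediately from Proposition~\ref{prop:spmap} and its proof," which establishes exactly the identity $\pair(\sF_{\bs C})=\vec(C_1)\otimes\cdots\otimes\vec(C_r)$ that you verify entrywise, applied to $\sF_{\bs B}$ and $\I_{\s v}$ together with linearity of $\pair$. You have merely written out the same steps in more detail.
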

\begin{proof}
    This follows immediately from Proposition~\ref{prop:spmap} and its proof.
\end{proof}

Proposition~\ref{prop: Sigma} states that $\pair(\Sigma)$ admits a rank-2 CP decomposition, where the first component captures the Tucker map in the tensor probabilistic PCA model, while the second component characterizes the noise level. Intuitively, if the entries of $A_k$'s are of constant order, the Frobenius norm of the first component would be much larger than that of the second one due to the highly sparsity of the second component. This motivates us to estimate the parameters $\mathbf{A} = (A_1, \ldots, A_r)$ from the rank-1 approximation of the noisy version of $\pair(\Sigma)$.  

Recall that we denote $B_k=A_kA_k^\top$ for all $k=1,\ldots,r$. Moreover, $\bs B=(B_1,\ldots,B_r)$ and $\sF_{\bs B}=\sF_{\bs A}\sF_{\bs A}^*$. An important geometric property implied by this model is that both components in the decomposition of $\Sigma$ in \eqref{eq:Sigma} are nearly orthogonal in the regime when $n_k \gg m_k$. Indeed, note that, from Cauchy-Schwarz inequality, for every $k=1,\ldots,r$, 
\begin{equation}\label{eq:lowrankFbound}
\langle \frac{B_k}{\|B_k\|_{\s{f}}}, \frac{I_{n_k}}{\|I_{n_k}\|_{\s{f}}}\rangle \;=\; \frac{\tr(B_k)}{\|B_k\|_{\s{f}} \sqrt{n_k}} \;\le\; \sqrt{\frac{m_k}{n_k}}. 
\end{equation}
As a result, the cosine between the $\sF_{\bs B}$ and $\sigma^2\I_{\s{v}}$ is provably small: 
$$
\frac{\<\sF_{\bs B},\sigma^2\I_{\s{v}}\>}{\|\sF_{\bs B}\|_{\s{f}}\|\sigma^2\I_{\s{v}}\|_{\s{f}}}\;=\;\frac{\sigma^2\tr(\sF_{\bs B})}{\|\sF_{\bs B}\|_{\s{f}}\sigma^2 \sqrt{n} }\;=\;\prod_{k=1}^r \frac{\tr(B_k)}{\|B_k\|_{\s{f}}\sqrt{n_k}}
\;\leq\;\prod_{k=1}^r \sqrt{\frac{m_k}{n_k}}.
$$

\noindent\textbf{Input of the algorithm.} Denote $\sS_N = \frac{1}{N} \sum_{i = 1}^N X_i \otimes X_i\in \mathcal L(\V,\V)$ to be the sample covariance. For a covariance $\Sigma$ in \eqref{eq:Sigma}, write
\begin{equation}\label{eq:SNandPhi}
    \sS_N\;=\;\Sigma+(\sS_N-\Sigma)\;=\;\sF_{\bs B}+\Phi,\qquad \Phi:=\sigma^2\I_{\s{v}}+\sS_N-\Sigma.    
\end{equation}
Denote $b_k=\vec(B_k)/\|B_k\|_{\s{f}}$ for $k=1,\ldots,r$ to be the unit vector obtained from $B_k$ by vectorizing and divinding by its norm. By Proposition~\ref{prop: Sigma}, we can write 
\begin{equation}\label{eq:S_Ndecomp}
\pair(\sS_N) \;=\; \prod_{k=1}^r \|B_k\|_{\s{f}} \;b_1\otimes \cdots\otimes b_r+\pair(\Phi). 
\end{equation}
If $\sS_N$ is good estimator of $\Sigma$ in a sense that will be made clear later, then the ``signal'' term $\prod_{k=1}^r \|B_k\|_{\s{f}}\;b_1 \otimes \ldots \otimes b_r$ dominates. A detailed concentration study of $\sS_N - \Sigma$ is provided in Section~\ref{sec:theory}. 
\medskip

\noindent\textbf{A single iteration.} We run a rank-1 power iteration over $\pair(\sS_N)$ to estimate the $b_k$'s. Suppose the current estimates $b_k^{(l)}=\vec(B_k^{(l)})$ satisfy $\|b_k^{(l)}\| = 1$ and $B_k^{(l)} \succeq 0$ with $\mathrm{rank}(B_k^{(l)}) \le m_k$ for all $k\in [r]$. In the $(l+1)$-st iteration we first compute, for each $k\in[r]$,
\begin{align}
\label{equ: b_k_l_tilde}
\tilde b_k \;=\; \bigl({b_1^{(l)}}^\top, \ldots, {b_{k-1}^{(l)}}^\top, I_{n_k^2}, {b_{k+1}^{(l)}}^\top, \ldots , {b_r^{(l)}}^\top \bigr)\cdot \pair(\sS_N),
\end{align}
and set $\tilde B_k=\vec^{-1}(\tilde b_k)$. We then take the best rank-$m_k$ truncation of $\tilde B_k$ and re-normalize:
\begin{align}
\label{equ: b_k_l_hat}
B_k^{(l+1)} \;=\; \frac{ \mathrm{Eig}_{m_k}(\tilde B_k) }{\bigl\|\mathrm{Eig}_{m_k}(\tilde B_k)\bigr\|_{\s{f}}}
\qquad\text{and}\qquad 
b_k^{(l+1)} := \vec\!\bigl(B_k^{(l+1)}\bigr),
\end{align}
where $\mathrm{Eig}_{m_k}(\tilde B_k)$ keeps the top $m_k$ eigenpairs of $\tilde B_k$ and zeroes out the rest.

\medskip
The next lemma shows that $\tilde B_k$ is automatically positive semi-definite (PSD), so as its top-$m_k$ truncation, and thus the update \eqref{equ: b_k_l_hat} preserves feasibility and unit norm.

\begin{lem}\label{lem:PSD-tildeBk}
If $B_j^{(l)} \succeq 0$ for all $j\neq k$, then $\tilde B_k$ defined by \eqref{equ: b_k_l_tilde} is positive semi-definite. Consequently, $\mathrm{Eig}_{m_k}(\tilde B_k)\succeq 0$ with $\mathrm{rank}\!\bigl(\mathrm{Eig}_{m_k}(\tilde B_k)\bigr)\le m_k$, and the update \eqref{equ: b_k_l_hat} satisfies
\[
B_k^{(l+1)} \succeq 0,\qquad \mathrm{rank}\bigl(B_k^{(l+1)}\bigr)\le m_k,\qquad \|b_k^{(l+1)}\|=\|B_k^{(l+1)}\|_{\s{f}}=1.
\]
\end{lem}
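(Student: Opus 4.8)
The plan is to prove that $\tilde B_k$ is positive semi-definite by exhibiting its associated quadratic form as an explicit sum of squares; the claims about the eigen-truncation and the normalization then follow immediately. Note first that $\tilde B_k$ does not involve $B_k^{(l)}$ at all (the $k$-th slot in \eqref{equ: b_k_l_tilde} is the identity), so only the hypotheses $B_j^{(l)}\succeq 0$ for $j\neq k$ will be used. \emph{Step 1: reduce to a scalar inequality.} Since $\sS_N=\tfrac1N\sum_{i=1}^N X_i\otimes X_i$ is self-adjoint, the componentwise description of the adjoint in Section~\ref{sec:self-adjoint} gives $(\sS_N)_{i_1,\ldots,i_r,s_1,\ldots,s_r}=(\sS_N)_{s_1,\ldots,s_r,i_1,\ldots,i_r}$, so the entries of $\pair(\sS_N)$ are invariant under swapping the two indices inside each paired coordinate $\overline{i_m s_m}$; as each $b_j^{(l)}=\vec(B_j^{(l)})$ enjoys the same symmetry (because $B_j^{(l)}$ is symmetric), the contracted vector $\tilde b_k$ reshapes to a \emph{symmetric} matrix, $\tilde B_k=\tilde B_k^\top$. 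It therefore suffices to show $v^\top\tilde B_k v\ge 0$ for every $v\in\R^{n_k}$, and unfolding \eqref{equ: b_k_l_tilde} one checks that $v^\top\tilde B_k v=\langle vv^\top,\tilde B_k\rangle$ equals the full contraction of $\pair(\sS_N)$ against the rank-one tensor $b_1^{(l)}\otimes\cdots\otimes\vec(vv^\top)\otimes\cdots\otimes b_r^{(l)}$, with the factor $\vec(vv^\top)$ sitting in mode $k$.

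\emph{Step 2: sum of squares.} Here I would substitute $\pair(X_i\otimes X_i)_{\overline{i_1 s_1},\ldots,\overline{i_r s_r}}=(X_i)_{i_1,\ldots,i_r}(X_i)_{s_1,\ldots,s_r}$, the spectral decompositions $B_j^{(l)}=\sum_p\lambda^{(j)}_p w^{(j)}_p (w^{(j)}_p)^\top$ with $\lambda^{(j)}_p\ge 0$ for $j\neq k$ (available because $B_j^{(l)}\succeq 0$), and note that the mode-$k$ contribution $v_{i_k}v_{s_k}$ is already of this product form with a single weight equal to $1$. For each fixed $i$ and each choice of the indices $(p_j)_{j\neq k}$, the remaining summation over the barred coordinates splits into two identical factors $\langle X_i,\bs w_p\rangle$, where $\bs w_p$ is the rank-one tensor whose mode-$j$ component is $w^{(j)}_{p_j}$ for $j\neq k$ and $v$ for $j=k$. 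Hence
\[
v^\top\tilde B_k v\;=\;\frac1N\sum_{i=1}^N\ \sum_{p_j:\,j\neq k}\Bigl(\prod_{j\neq k}\lambda^{(j)}_{p_j}\Bigr)\,\langle X_i,\bs w_p\rangle^{2}\;\ge\;0,
\]
so $\tilde B_k\succeq 0$. Equivalently, up to a reordering of the tensor factors, $\tilde B_k$ is the partial trace over the modes $j\neq k$ of $(I_{n_k}\otimes G)^{1/2}\,\mat(\sS_N)\,(I_{n_k}\otimes G)^{1/2}$, where $G$ is the ordinary Kronecker product of the matrices $B_j^{(l)}$, $j\neq k$, hence PSD; a congruence keeps $\mat(\sS_N)$ PSD and a partial trace of a PSD matrix is PSD. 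Either way the conclusion $\tilde B_k\succeq 0$ is the same.

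\emph{Step 3: truncation and normalization.} As $\tilde B_k$ is symmetric and PSD, $\mathrm{Eig}_{m_k}(\tilde B_k)$ keeps only nonnegative eigenvalues and is therefore PSD of rank at most $m_k$; assuming, as the update \eqref{equ: b_k_l_hat} tacitly requires, that $\tilde B_k\neq 0$, the Frobenius normalization yields $\|B_k^{(l+1)}\|_{\s{f}}=1$, while $B_k^{(l+1)}\succeq 0$ and $\mathrm{rank}(B_k^{(l+1)})\le m_k$ are inherited from $\mathrm{Eig}_{m_k}(\tilde B_k)$. Finally $\|b_k^{(l+1)}\|=\|\vec(B_k^{(l+1)})\|=\|B_k^{(l+1)}\|_{\s{f}}=1$ because $\vec$ is an isometry between $\R^{n_k\times n_k}$ and $\R^{n_k^2}$.

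The only genuinely delicate point is Step 2: one has to carry out the index bookkeeping for $\pair$ carefully enough to see that the mode-$k$-open contraction of $\pair(\sS_N)$ really reorganizes into the displayed sum of squares (equivalently, into a partial trace of a congruence-transformed PSD matrix). The remaining items---symmetry of $\tilde B_k$, the behaviour of $\mathrm{Eig}_{m_k}$ on a PSD matrix, and the isometry property of $\vec$---are routine, the one caveat being the degenerate case $\tilde B_k=0$, where the update is simply not performed and there is nothing to prove.
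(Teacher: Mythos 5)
Your proof is correct and follows essentially the same route as the paper's: both reduce to the quadratic form $v^\top\tilde B_k v$, recognize it as a contraction of the PSD operator $\sS_N$ against PSD matrices in the remaining modes, and conclude nonnegativity by factorizing those matrices (the paper via Cholesky, you via spectral decompositions plus the rank-one empirical structure of $\sS_N$, which makes the sum of squares fully explicit). The truncation and normalization steps are handled identically, and your explicit note on the degenerate case $\tilde B_k=0$ is a small point the paper leaves implicit.
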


\begin{proof}
For any $u\in\R^{n_k}$, by definition
\[
u^\top \tilde B_k\, u
\;=\; \langle \tilde B_k,\, uu^\top\rangle
\;=\; \langle \mat(\sS_N), B^{(l)}_r \otK \ldots \otK  B^{(l)}_{k+1} \otK uu^\top \otK B^{(l)}_{k-1}\otK \ldots  \otK B^{(l)}_1 \rangle.
\]
Since $\mat(\sS_N)\succeq 0$ and each $B_j^{(l)}\succeq 0$ for $j\neq k$, the last inner product can be rewritten as a quadratic form after the Cholesky decomposition of the $B_j^{(l)}$'s and is therefore nonnegative. Hence $u^\top \tilde B_k\, u\ge 0$ for all $u$, that is, $\tilde B_k\succeq 0$. A PSD matrix is symmetric, so $\tilde B_k$ is symmetric as well. Keeping the top $m_k$ eigenpairs of a PSD matrix yields another PSD matrix with rank at most $m_k$, proving the claim for $\mathrm{Eig}_{m_k}(\tilde B_k)$. Finally, the normalization in \eqref{equ: b_k_l_hat} enforces $\|B_k^{(l+1)}\|_{\s{f}}=1$, and since $\|\vec(X)\|=\|X\|_{\s{f}}$, we also have $\|b_k^{(l+1)}\|=1$.
\end{proof}

\noindent\textbf{Estimates. }
The above two steps will be applied recursively. The algorithm terminates after $L$ iterations. Denote $\widehat{b}_k=b_k^{(L)}$ for $k=1,\ldots,r$. We estimate the signal strength 
\begin{equation}\label{eq:omega}
    \omega\;=\;\prod_{k=1}^r \|B_k\|_{\s{f}}
\end{equation} 
via
\begin{equation}\label{equ:omega_hat}
\widehat{\omega} = \left(\widehat{b}_1^\top, \ldots, \widehat{b}_r^\top \right) \cdot \pair(\sS_N).
\end{equation}
Moreover, we estimate the $A_k$'s via
\begin{align}
\label{equ:A_hat}
\widehat{B}_k:= \widehat{\omega}^{\frac1r}\vec^{-1}(\widehat{b}_k), \quad \widehat{A}_k =  \widehat U_k\widehat{\Lambda}_k^{1/2},
\end{align}
where $\widehat{B}_k=\widehat U_k\widehat\Lambda_k\widehat U_k^\top$ is the spectral decomposition of $\widehat{B}_k$. 
Finally, we apply the residual learning to estimate $\sigma^2$. Specifically, $\widehat{\sigma^2}$ is computed as 
\begin{align}
\label{equ: sigma_2_hat}
\widehat{\sigma^2} \;:=\; \frac1n \tr(\sS_N-\sF_{\widehat{\bs B}})\;=\;\frac1n\tr(\sS_N)-\frac1n\prod_{k=1}^r \tr(\widehat{B}_k).
\end{align}

\begin{algorithm}[!htp]
	\SetKwInOut{Input}{Input}
	\SetKwInOut{Output}{Output}
	\caption{Rank-1 power iteration} \label{algorithm1}
	\Input{Data $X^{(1)}, \ldots, X^{(N)} \in \V$, initializers $\widehat{b}_k^{(0)}$ such that $\|b_k^{(0)}\| = 1$, and $\vec^{-1}(b^{(0)}_k)$ is positive semi-definite and has rank at most $m_k$, for $k \in [r]$, number of iteration $L$\\}
	\Output{Estimators $\widehat{A}_1, \ldots, \widehat{A}_r$, $\widehat{\sigma^2}$\\}
	Compute the sample Kronecker covariance tensor as $\pair(\sS_N)=\frac{1}{N} \sum_{i = 1}^N X_i \otK X_i$.  \\
    \For {$l =1, \ldots, L$}{
   Power iteration step: Compute $\tilde{b}_k$ according to (\ref{equ: b_k_l_tilde}), for all $k \in [r]$. \\
   PCA step: Update $b_k^{(l)}$ according to (\ref{equ: b_k_l_hat}), for all $k \in [r]$.\\
    }
   Compute $\widehat{\omega}$ and $\widehat{A}_k$ according to \eqref{equ:omega_hat} and \eqref{equ:A_hat}. \\
   Compute $\widehat{\sigma^2}$ according to (\ref{equ: sigma_2_hat}). 
\end{algorithm}


The proposed rank-1 power iteration algorithm is summarized in Algorithm~\ref{algorithm1}.

\paragraph{Practical cost.}
Forming the sample Kronecker–covariance tensor
\(\sS_N=\tfrac1N\sum_{i=1}^N X^{(i)}\otimes X^{(i)}\in\cL(\V,\V)\)
and then carrying out the power-iteration step
\eqref{equ: b_k_l_tilde} can, in principle, be expensive:
explicitly materialising \(\sS_N\) needs
\(O(Nn^2)\) arithmetic operations and \(O(n^2)\) memory
(\(n=\prod_{k}n_k\)).
A direct evaluation of
\begin{equation}\label{eq:auxefficient}    \left(\vec(H_1H_1^\top)^\top,\dots,\vec(H_rH_r^\top)^\top\right)
   \cdot\pair(\sS_N),
   \qquad
   H_k\in\R^{n_k\times\ell_k},
\end{equation}
would add a further \(O(rn^2+\sum_{k}n_k^2\ell_k)\) flops, for any \(\bs H=(H_1,\dots,H_r)\). However, a simple identity yields a much cheaper alternative:
\[
\left(\vec(H_1H_1^\top)^\top,\dots,\vec(H_rH_r^\top)^\top\right)
\cdot\pair(\sS_N)
=\frac1N\sum_{i=1}^N
      \|\sF_{\bs H}^{*}(X^{(i)})\|_{\s{f}}^{2},
\]
where
\(\sF_{\bs H}^{*}\) is the adjoint Tucker map
(cf.\ Section~\ref{sec:slm}).  Computing each
\(\sF_{\bs H}^{*}(X^{(i)})\) costs
\(O(\prod_{k} n_k\ell_k)\) flops, so the total workload is
\(O\!\bigl(N\,n\,\ell\bigr)\) with
\(\ell=\prod_{k}\ell_k\).
In the special case used in \eqref{eq:auxefficient} we take
\(\ell_k=m_k\), giving \(O(Nnm)\) flops but \emph{without}
ever storing the full \(n^2\)-dimensional tensor.
Thus the algebraic reformulation avoids the memory bottleneck and,
for tall–thin \(H_k\) (small \(\ell_k\)), reduces the runtime by up to
a factor \(n/\ell\).

\subsection{Concentration}\label{sec:theory}

\color{black}

In this section, we provide statistical guarantee on the concentration behavior of $\sS_N -\Sigma$. Denote $n=\prod_{i=1}^r n_i$, $m=\prod_{i=1}^r m_i$ to be the dimensions of $\V$ and $\U$ respectively. Also, we denote $n_{\max} = \max_{k = 1, \ldots, r} n_k$ and $m_{\max} = \max_{k = 1, \ldots, r} m_k$ throughout. Recall the definition of the spectral norm $\|\cdot\|_{\rm ml}$ in \eqref{eq:spnorm}. The following theorm provides a sharp concentraion inequality for the sample covariance operator.

\begin{thm}
\label{theorem:concentration_spectral}
Under the TPCA model,  for any $\delta_1,\delta_2,\delta_3>0$, we have
\begin{align*}
\|\sS_N - \Sigma\|_{\rm ml} \le\; t_1 + t_2 +t_3 \qquad\mbox{with probability at least }1- \delta_1-\delta_2-\delta_3,
\end{align*}
where
\begin{align*}
& t_1 := 2\sigma^2\max\{\alpha_1,\sqrt{\alpha_1}\} \quad\quad\quad\;\text{ with } \alpha_1: = \frac{1}{0.145 N}\left(  2\log(10r) \sum_{k=1}^r n_k+\log\tfrac{1}{\delta_1}\right),\\
&t_2 := 2\|\bs A\|^2\max\{\alpha_2,\sqrt{\alpha_2}\} \quad\quad\text{ with } \alpha_2: = \frac{1}{0.145  N}\left(2\log(10r)  \sum_{k=1}^r m_k+\log\tfrac{1}{\delta_2}\right), \\
&t_3 := 4\sigma\|\bs A\|\max\{\alpha_3,\sqrt{\alpha_3}\} \quad\quad\;\text{ with } \alpha_3: = \frac{3}{N}\left( \log(10r)\sum_{k=1}^r (n_k + m_k )+\log\tfrac{1}{\delta_3}\right).
\end{align*}
\end{thm}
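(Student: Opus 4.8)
The plan is to reduce the claim to three uniform deviation bounds over rank-one tensors. Since $\mu=0$, for any admissible pair of rank-one tensors $\bs u,\bs v$ (factor norms $\le1$) one has $\langle(\sS_N-\Sigma)\bs u,\bs v\rangle=\frac1N\sum_{i=1}^N\langle X^{(i)},\bs u\rangle\langle X^{(i)},\bs v\rangle-\E[\langle X,\bs u\rangle\langle X,\bs v\rangle]$. Expanding $X^{(i)}=\sF_{\bs A}Z^{(i)}+\eps^{(i)}$ and $\langle X,\bs u\rangle=\langle Z,\sF_{\bs A}^*\bs u\rangle+\langle\eps,\bs u\rangle$, and using independence of $Z$ and $\eps$, the product and its expectation split into a pure-signal part, a pure-noise part, and a cross part; writing $\sS_N-\Sigma=\sM_{\mathrm{sig}}+\sM_{\mathrm{noise}}+\sM_{\mathrm{cross}}$ accordingly (the cross part needs no centering, as $\E[\langle\sF_{\bs A}Z,\bs u\rangle\langle\eps,\bs v\rangle]=0$), the triangle inequality for $\|\cdot\|_{\mathrm{ml}}$ yields $\|\sS_N-\Sigma\|_{\mathrm{ml}}\le\|\sM_{\mathrm{noise}}\|_{\mathrm{ml}}+\|\sM_{\mathrm{sig}}\|_{\mathrm{ml}}+\|\sM_{\mathrm{cross}}\|_{\mathrm{ml}}$. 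I would then bound these three terms by $t_1,t_2,t_3$ respectively, each on an event of probability at least $1-\delta_j$, and finish with a union bound.

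The dimension reduction that produces $\sum_k m_k$ in $\alpha_2,\alpha_3$ comes from routing the adjoint: $\langle\sF_{\bs A}Z,\bs u\rangle=\langle Z,\sF_{\bs A}^*\bs u\rangle$, and $\sF_{\bs A}^*\bs u=(A_1^\top u_1)\otimes\cdots\otimes(A_r^\top u_r)$ is a rank-one tensor in $\U$ whose $k$-th factor has norm at most $\|A_k\|$ (cf.\ Section~\ref{sec:slm}), so $\sF_{\bs A}^*\bs u=\|\bs A\|\,\bs y$ for a rank-one $\bs y\in\U$ with factor norms $\le1$, where $\|\bs A\|=\prod_k\|A_k\|=\|\sF_{\bs A}\|$ by Lemma~\ref{lem:normsequal}. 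Hence $\|\sM_{\mathrm{sig}}\|_{\mathrm{ml}}\le\|\bs A\|^2\cdot\max_{\bs y,\bs y'}\bigl|\tfrac1N\sum_i\langle Z^{(i)},\bs y\rangle\langle Z^{(i)},\bs y'\rangle-\langle\bs y,\bs y'\rangle\bigr|$ over rank-one tensors in $\U$; $\|\sM_{\mathrm{cross}}\|_{\mathrm{ml}}$ reduces to a quantity of scale $\sigma\|\bs A\|$ indexed by rank-one tensors in $\U$ (the $Z$-side) times rank-one tensors in $\V$ (the $\eps$-side), with a factor $2$ for its two summands; and $\|\sM_{\mathrm{noise}}\|_{\mathrm{ml}}$ already lives on rank-one tensors in $\V$ with scale $\sigma^2$. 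For a \emph{fixed} admissible pair, each summand is a centered product of two jointly Gaussian scalars with variances controlled by these scales; polarizing $ab=\tfrac14((a+b)^2-(a-b)^2)$ turns $\tfrac1N\sum_i a_ib_i-\E[ab]$ into a linear combination of centered normalized $\chi^2_N$ statistics, to which a Laurent--Massart/Bernstein bound gives a tail of the form $2\exp(-cN\min\{t^2,t\})$. This is the source of the sub-Gaussian/sub-exponential dichotomy $\max\{\alpha_j,\sqrt{\alpha_j}\}$ and, after optimizing the Chernoff exponent, of the numerical constants ($0.145$, $3$) in the $\alpha_j$ and the leading factors $2$ and $4$.

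The union bound is via a multilinear $\epsilon$-net argument. Because $\langle(\sS_N-\Sigma)\bs u,\bs v\rangle$ is multilinear in $(u_1,\dots,u_r)$ and in $(v_1,\dots,v_r)$ (likewise after the reductions above), I take $\epsilon$-nets $\mathcal N_k$ of each sphere $S^{n_k-1}$ or $S^{m_k-1}$ with $\epsilon$ of order $1/r$, so $|\mathcal N_k|\le(Cr)^{n_k}$ or $(Cr)^{m_k}$; the product net over the $r$ modes and the two sides has log-cardinality at most $2\log(Cr)\sum_k n_k$ (respectively involving $\sum_k m_k$, or $\sum_k(n_k+m_k)$), so applying the fixed-pair tail with failure probability divided by the net cardinality matches the $\log(10r)$ terms after choosing $C$. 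Converting $\max$ over the net into $\max$ over the sphere-products costs the standard multilinear factor $\prod_k(1-\epsilon)^{-1}$ per side, i.e.\ $(1-\epsilon)^{-2r}$, which stays bounded for $\epsilon\asymp1/r$ and is absorbed into the leading constants. Intersecting the three resulting events gives the probability $1-\delta_1-\delta_2-\delta_3$.

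The main obstacle is the bookkeeping that keeps the exponent at $O(\sum_k n_k\log r)$ rather than $O(n)$: one must simultaneously (i) shrink the net radius like $1/r$ while controlling both $\prod_k|\mathcal N_k|$ and the conversion factor $(1-\epsilon)^{-2r}$, and (ii) route the adjoint $\sF_{\bs A}^*$ so that $\sM_{\mathrm{sig}}$ and the $Z$-side of $\sM_{\mathrm{cross}}$ see only the $\U$-dimensions $m_k$. A secondary but fiddly point is extracting explicit, reasonably sharp constants in the Laurent--Massart/Bernstein step for \emph{correlated} Gaussian products (the correlation being $\langle\bs u,\bs v\rangle$, which varies over the net), since this is what pins down the precise numbers in the statement; this is routine but must be carried out carefully to land on exactly the claimed form.
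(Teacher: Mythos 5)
Your proposal is correct and follows essentially the same route as the paper: the same three-way decomposition of $\sS_N-\Sigma$ into noise, signal, and cross terms, the same adjoint/whitening step $\sF_{\bs A}^*\bs v\in\|\bs A\|\,\B_{\s u}$ that replaces $\sum_k n_k$ by $\sum_k m_k$ in the signal and cross exponents, and the same multilinear $\epsilon$-net with radius of order $1/r$ yielding the $(10r)^{\sum_k n_k}$-type cardinalities and a bounded conversion factor. The only divergence is at the fixed-pair tail, where you use polarization plus a Laurent--Massart/Bernstein bound while the paper invokes a refined Hanson--Wright inequality (constant $0.145$) and an explicit MGF computation for the product of independent Gaussians in the cross term; these are interchangeable, though reproducing the exact stated constants requires the paper's specific versions rather than a generic Chernoff optimization.
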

Here \(\|\bs A\|:=\prod_{k=1}^{r}\|A_k\|\) is the product of the spectral norms of the factor matrices. All the proofs in this section are moved to Appendix~\ref{app:proofsalg}.
\smallskip

The following corollary shows a version of high-dimensional consistency.
\begin{cor}\label{cor:HDcons}
Taking in Theorem~\ref{theorem:concentration_spectral} $$\delta_1=N^{-2 \log(10r)\sum_{k = 1}^r n_k},\quad \delta_2=N^{-2 \log(10r)\sum_{k = 1}^r m_k},\text{ and } \delta_3=N^{- \log(10r)\sum_{k = 1}^r (n_k +m_k)},$$
we conclude that 
$$\frac{\|\sS_N - \Sigma\|_{\rm ml}}{\|\bm{A}\|^2}\to 0, 
$$
as long as $\max\left\{\tfrac{n_{\max}}{\|\bs A\|^2}, m_{\max}, \tfrac{n_{\max}}{\|\bs A\|}\right\}\tfrac{\log(N)}{N}\to 0$ (here $r$ and $\sigma^2$ are treated as fixed). In particular, the condition can be reduced to $\max\left\{ m_{\max}, \tfrac{n_{\max}}{\|\bs A\|}\right\}\tfrac{\log(N)}{N}\to 0$ when $\|\bs A\|>1$, and to $\tfrac{m_{\max}\log(N)}{N}\to 0$ when $\|\bs A\|>\tfrac{n_{\max}}{m_{\max}}$. 
\end{cor}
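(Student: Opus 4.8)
The plan is to substitute the prescribed failure probabilities into Theorem~\ref{theorem:concentration_spectral} and then carry out the resulting bookkeeping. First I would compute the $\alpha_i$'s. Since $\log(1/\delta_1)=2\log(10r)(\log N)\sum_{k}n_k$ and analogously for $\delta_2,\delta_3$, each $\alpha_i$ collapses to a constant depending only on $r$ times $\tfrac{1+\log N}{N}$ times the relevant dimension sum. Using $n_{\max}\le\sum_k n_k\le r\,n_{\max}$, $m_{\max}\le\sum_k m_k\le r\,m_{\max}$, $\sum_k(n_k+m_k)\le 2r\,n_{\max}$, and treating $r$ and $\sigma^2$ as fixed, this yields
$$\alpha_1\;\asymp\;\frac{n_{\max}\log N}{N},\qquad \alpha_2\;\asymp\;\frac{m_{\max}\log N}{N},\qquad \alpha_3\;\asymp\;\frac{n_{\max}\log N}{N}.$$

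Next I would divide the bound $\|\sS_N-\Sigma\|_{\rm ml}\le t_1+t_2+t_3$ by $\|\bs A\|^2$ and treat the three summands separately, splitting $\max\{\alpha_i,\sqrt{\alpha_i}\}\le\alpha_i+\sqrt{\alpha_i}$ into a linear and a square-root piece. The linear pieces reproduce, up to $(r,\sigma)$-constants, exactly the three quantities in the hypothesis: $\tfrac{\sigma^2\alpha_1}{\|\bs A\|^2}\asymp\tfrac{n_{\max}}{\|\bs A\|^2}\tfrac{\log N}{N}$, $\alpha_2\asymp m_{\max}\tfrac{\log N}{N}$, and $\tfrac{\sigma\alpha_3}{\|\bs A\|}\asymp\tfrac{n_{\max}}{\|\bs A\|}\tfrac{\log N}{N}$, so each vanishes under the stated condition. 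The square-root pieces $\tfrac{\sigma^2\sqrt{\alpha_1}}{\|\bs A\|^2}$, $\sqrt{\alpha_2}$, $\tfrac{\sigma\sqrt{\alpha_3}}{\|\bs A\|}$ are then controlled by observing that each is a bounded multiple of the square root of the corresponding linear piece — for instance $\bigl(\tfrac{\sigma\sqrt{\alpha_3}}{\|\bs A\|}\bigr)^2\asymp\sigma^2\tfrac{n_{\max}}{\|\bs A\|^2}\tfrac{\log N}{N}\to0$ and $(\sqrt{\alpha_2})^2\asymp m_{\max}\tfrac{\log N}{N}\to0$ — so that all six terms tend to zero as soon as $\max\{\tfrac{n_{\max}}{\|\bs A\|^2},m_{\max},\tfrac{n_{\max}}{\|\bs A\|}\}\tfrac{\log N}{N}\to0$, the claimed hypothesis. (For the first summand this comparison uses that $\|\bs A\|$ does not decay faster than the relevant dimension-scaled rate, which is automatic in the regime of interest, e.g.\ whenever $\|\bs A\|\gtrsim1$.)

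Finally, the two simplified conditions are pure inequalities among the three quantities inside the maximum. If $\|\bs A\|>1$ then $\tfrac{n_{\max}}{\|\bs A\|^2}<\tfrac{n_{\max}}{\|\bs A\|}$, so the first quantity is dominated and the maximum reduces to $\max\{m_{\max},\tfrac{n_{\max}}{\|\bs A\|}\}$. If moreover $\|\bs A\|>\tfrac{n_{\max}}{m_{\max}}$ (which forces $\|\bs A\|>1$ because $n_{\max}\ge m_{\max}$), then $\tfrac{n_{\max}}{\|\bs A\|}<m_{\max}$, so both $n_{\max}$-terms are dominated by $m_{\max}$ and the condition collapses to $\tfrac{m_{\max}\log N}{N}\to0$.

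I do not expect a genuine obstacle: the argument is entirely bookkeeping on top of Theorem~\ref{theorem:concentration_spectral}. The only points requiring care are the two-regime behaviour of $\max\{\alpha,\sqrt\alpha\}$ (harmless, since every $\alpha_i\to0$ under the hypotheses, so $\max=\sqrt{\alpha_i}$ eventually), keeping track of the distinct prefactors $\sigma^2$, $\|\bs A\|^2$, $\sigma\|\bs A\|$ attached to $t_1,t_2,t_3$ so that after normalising by $\|\bs A\|^2$ each term matches the correct entry of the maximum, and remembering that the extra $\log N$ in the final condition is precisely $\log(1/\delta_i)$ for the chosen polynomially small failure probabilities.
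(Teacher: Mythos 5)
Your proposal is correct and follows essentially the same route as the paper, which does not give a separate proof of the corollary but embeds the relevant computations (the explicit form of each $\alpha_i$ under the chosen $\delta_i$ and the resulting rates $\alpha_1,\alpha_3\asymp n_{\max}\log N/N$, $\alpha_2\asymp m_{\max}\log N/N$) in the appendix proof of Theorem~\ref{theorem:concentration_spectral} and in the discussion following the corollary. Your parenthetical caveat about the piece $\sigma^2\sqrt{\alpha_1}/\|\bs A\|^2$ requiring $\|\bs A\|\gtrsim 1$ is a fair and slightly more careful observation than anything the paper records --- the stated hypothesis controls $\alpha_1/\|\bs A\|^2$ and $\alpha_3/\|\bs A\|^2$ but not $\alpha_1/\|\bs A\|^4$ when $\|\bs A\|$ is small, a regime the paper implicitly excludes (cf.\ its Gaussian-design calibration $\|\bs A\|\asymp\sqrt{n}$).
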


\medskip
\noindent\textit{Sketch of the proof and scaling remarks.}
The bound in Theorem~\ref{theorem:concentration_spectral}
is obtained by isolating the three components
\(
N^{-1}\!\sum_{i=1}^{N}\eps_i\!\otimes\!\eps_i-\sigma^{2}\I_{\s{v}},
\;
N^{-1}\!\sum_{i=1}^{N}Y_i\!\otimes\!Y_i-\sF_{\bs A}\sF_{\bs A}^{*},
\;
2N^{-1}\!\sum_{i=1}^{N}Y_i\!\otimes\!\eps_i,
\)
where \(Y_i=\bs A\!\cdot\! Z_i\).
Each term is handled by a covering argument combined with a Hanson–Wright
inequality for Gaussian chaos (Lemma~\ref{lem: hanson-wright}):
the isotropic noise part yields \(t_1\),
the structured signal part yields \(t_2\),
and the cross term yields \(t_3\).
Importantly, treating them separately allows a sharper result than a direct bound on
\(\|\sS_N-\Sigma\|_{\rm ml}\). This is because we exploit the TPCA whitening technique for the dominant signal term and the cross term, which effectively reduces the covering number from
\(\exp(\sum_{k}n_k)\) to \(\exp(\sum_{k}m_k)\).

\smallskip
To see which term dominates, fix \(r\) and \(\sigma^{2}\) and choose
\(\delta_1,\delta_2,\delta_3\) as in Corollary~\ref{cor:HDcons}.
Then
\(\alpha_1\simeq n_{\max}/N\),
\(\alpha_2\simeq m_{\max}/N\),
\(\alpha_3\simeq n_{\max}/N\), where $\cdot \simeq \cdot$ denotes the two sequences are asymptotically of the same order, up to logarithm factors.
If furthermore $\|A_k\|\simeq \sqrt{n_k}$ (this holds with high
probability when the entries are i.i.d.\ \(N(0,1)\), see
Theorem 6.1 of~\cite{Wainwright_2019}),
we obtain \(t_1\ll t_3\).
When \(N\ge n_{\max}\),  we have
\(t_3\ll t_2\), so $t_2$ dominates. Similarly, when $N< n_{\max}$, we also have $t_3 \ll t_2$ unless the mode dimensions are \emph{extremely}
unbalanced (precisely, unless
\(n_{\max}\gg n_{-\max} m_{\max}\max\{m_{\max},N\}\), where $n_{-\max} = n/n_{\max}$). 
Hence the spectral error is driven by the structured part of the model.

\smallskip
Finally, we remark that if \(\sigma^{2}\) is small relative to
\(\prod_{k}\|A_k\|^{2}\)
and the preceding balance condition holds,
then Theorem~\ref{theorem:concentration_spectral} gives
\[
\|\sS_N-\Sigma\|_{\rm ml}
      =\widetilde{O}\!\left(
           \|\Sigma\|\,\sqrt{\tfrac{\sum_{k}m_k}{N}}
        \right),
\]
where the \(\widetilde{O}\) hides poly-logarithmic factors.
Treating each \(X_i\) as a length-\(n\) vector and applying a standard
Wishart bound (Example 6.4 in~\cite{Wainwright_2019}) would instead yield
\(\widetilde{O}(\|\Sigma\|\sqrt{n/N})\).
Thus respecting the TPCA structure and using the multilinear spectral
norm replaces the \(\sqrt{\prod n_k}\) factor by \(\sqrt{\sum_k m_k}\),
removing any dependence on the (possibly huge) mode sizes.
This improvement is substantial and highlights a key advantage of the
tensor viewpoint; we flag the point already in the Introduction.

\subsection{One-step consistency}

As before, let \(b_k := \vec(B_k)/\|B_k\|_{\s{f}}\) for \(k=1,\ldots,r\). Suppose the initializer \(B_k^{(0)}\) is positive semidefinite with \(\operatorname{rank}(B_k^{(0)})\le m_k\) and \(\|B_k^{(0)}\|_{\s{f}}=1\). Then, at each iteration \(l\), the update \(b_k^{(l)}\) is a unit vector and \(B_k^{(l)}:=\vec^{-1}(b_k^{(l)})\) is positive semidefinite with \(\operatorname{rank}(B_k^{(l)})\le m_k\).

Let \(\theta_k^{(l)}\) denote the angle between \(b_k\) and \(b_k^{(l)}\). Using \(\|B_k^{(0)}\|_{\s{f}}=1\),
\[
\cos \theta_k^{(0)}
= \langle b_k, b_k^{(0)} \rangle
= \frac{1}{\|B_k\|_{\s{f}}}\,\tr\!\big(B_k B_k^{(0)}\big).
\]
Since \(B_k\) and \(B_k^{(0)}\) are positive semidefinite, \(\tr(B_k B_k^{(0)})\ge 0\), hence \(0\le \cos \theta_k^{(0)}\le 1\) and \(0\le \sin \theta_k^{(0)}\le 1\). Moreover,
\[
\|b_k-b_k^{(l)}\|^2 \;=\; 2\big(1-\cos \theta_k^{(l)}\big).
\]
Thus, to control the one-step estimation error \(\|b_k-b_k^{(1)}\|\), it suffices to bound \(\sin \theta_k^{(1)}\) (equivalently, \(\cos \theta_k^{(1)}\)).

Our probabilistic bounds are derived from deterministic arguments on a high-probability event. For \(\alpha>0\), define
\[
\bs E(\alpha)\;:=\;\big\{ \|\Phi\|_{\rm ml}\le \alpha \big\}.
\]
Given \(\delta_1,\delta_2,\delta_3\in(0,1)\), let \(t_1,t_2,t_3\) be as in Theorem~\ref{theorem:concentration_spectral}, and set
\begin{equation}\label{eq:psi}
\psi\;:=\;\sigma^2+t_1+t_2+t_3.
\end{equation}
Since
\[
\|\Phi\|_{\rm ml}
= \big\|\sigma^2 \I_{\s{v}} + \sS_N - \Sigma\big\|_{\rm ml}
\le \sigma^2 + \big\|\sS_N - \Sigma\big\|_{\rm ml},
\]
the event \(\bs E(\psi)\) is implied by \(\{\|\sS_N-\Sigma\|_{\rm ml}\le t_1+t_2+t_3\}\), which holds with probability at least \(1-\delta_1-\delta_2-\delta_3\) by Theorem~\ref{theorem:concentration_spectral}. Throughout this subsection we work assuming \(\bs E(\psi)\).

Our analysis depends on two quantities: the signal strength \(\omega\) defined in~\eqref{eq:omega}, and the noise level \(\psi\) in~\eqref{eq:psi}; typically \(\omega \gg \psi\). Recall that \(m = m_1\cdots m_r\) and consider the function

\newcommand{\Cnoise}{32 m \big(\tfrac{\psi}{\omega}\big)^2}
\begin{equation}\label{eq:gfun}
g(x)\;=\;x^2(1-x^2)^{r-1}-\Cnoise,\qquad 0<x<1,
\end{equation}
which is negative at both endpoints. The function $x\mapsto x^2(1-x^2)^{r-1}$ attains its maximum over $(0,1)$ at $x=\sqrt{1/r}$ with value $\tfrac1r(1-\tfrac1r)^{r-1}$. Hence $g(x)\le 0$ for all $0<x<1$ unless
\begin{equation}\label{eq:assnoneptyint}
\big(\tfrac{\psi}{\omega}\big)^2
\;<\;\frac{1}{32m}\,\frac1r\Big(1-\frac1r\Big)^{r-1}.
\end{equation}
Moreover, if \eqref{eq:assnoneptyint} holds, then $g(x)>0$ precisely on an interval $(\tau_0,\tau_1)$, where the roots $\tau_0 = \tau_0(\tfrac{\psi}{\omega}),\tau_1 = \tau_1(\tfrac{\psi}{\omega})$ of $g$ depend on $\psi/\omega$ and satisfy, as $\psi/\omega\to 0$,
\begin{enumerate}
\item $\displaystyle \tau_0=4\sqrt{2m}\,\tfrac{\psi}{\omega}
\;+\;O\!\big((\tfrac{\psi}{\omega})^3\big),$
\item $\displaystyle 1-\tau_1
=\tfrac12\,(32m)^{1/(r-1)}\big(\tfrac{\psi}{\omega}\big)^{2/(r-1)}
\;+\;O\!\big((\tfrac{\psi}{\omega})^{4/(r-1)}\big).$
\end{enumerate}
The above equalities can be obtained by expressed $x^2(1-x^2)^{r-1}$ as $x^2 - (r-1)x^4 + \ldots + (-1)^{r-1}x^{2r}$ and $(t-1)^2 t^{r-1}(t+2)^{r-1}$ with $t = 1 -x$, respectively.  
In particular, for small $\psi/\omega$ we have $\tau_0\downarrow 0$ and $\tau_1\uparrow 1$ (although the latter convergence is much slower for large values of $r$). The point $\tau_1$ is crucially used in our next assumption.

\begin{assp}[Initialization]\label{assp: tau}
Assume \eqref{eq:assnoneptyint} and suppose the initial matrices $B_k^{(0)}$ satisfy
$\sin \theta_k^{(0)} \le \tau$ for all $k=1,\ldots,r$, where $\tau>0$ is such that $\tau < \tau_1\!\big(\tfrac{\psi}{\omega}\big)$.
\end{assp}

Assumption~\ref{assp: tau} is mild: $\omega/\psi$ is the signal-to-noise ratio. When this ratio is large (e.g., $\psi/\omega\to 0$ along an asymptotic sequence), the asymptotic behaviour of $\tau_1=\tau_1(\tfrac\psi\omega)$ shows that we can choose $\tau$ arbitrarily close to $1$.

\begin{thm}\label{theorem:rank-1poweriteration}
Suppose Assumption~\ref{assp: tau} and the event $\bs E(\psi)$ hold. After the first iteration of Algorithm~\ref{algorithm1},
\begin{equation}\label{equ: sin_theta_k_1}
\sin\theta^{(1)}_k
\;\le\;
\frac{\psi}{\omega}\,
\frac{4\sqrt{2m}}{(1-\tau^2)^{\frac{r-1}{2}}}
\qquad\text{for all }k=1,\ldots,r,
\end{equation}
and
\[
\frac{|\omega-\widehat{\omega}^{(1)}|}{\omega}
\;\le\;
\Big(\tfrac{\psi}{\omega}\Big)^{\!2}
\frac{16mr}{(1-\tau^2)^{r-1}}
\;+\;
\frac{\psi}{\omega}\sqrt{m}\,.
\]
\end{thm}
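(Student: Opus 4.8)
The plan is to track, for each mode $k$, how one power‑iteration step acts on the signal direction $b_k=\vec(B_k)/\|B_k\|_{\s{f}}$, treating $\pair(\Phi)$ as a perturbation. Write $\hat B_k:=\vec^{-1}(b_k)=B_k/\|B_k\|_{\s{f}}$, a PSD unit‑Frobenius matrix of rank $\le m_k$. Using the CP decomposition $\pair(\sS_N)=\omega\, b_1\otimes\cdots\otimes b_r+\pair(\Phi)$ from Proposition~\ref{prop: Sigma} and \eqref{eq:S_Ndecomp}, contracting the $(l{=}0)$‑version of \eqref{equ: b_k_l_tilde} against $b_j^{(0)}$ in all modes $j\ne k$ gives
\[
\tilde B_k=\vec^{-1}(\tilde b_k)=c_k\hat B_k+\tilde\Phi_k,\qquad c_k:=\omega\prod_{j\ne k}\langle b_j^{(0)},b_j\rangle=\omega\prod_{j\ne k}\cos\theta_j^{(0)},\qquad \tilde\Phi_k:=\vec^{-1}\!\Big(\big({b_1^{(0)}}^\top,\dots,I_{n_k^2},\dots,{b_r^{(0)}}^\top\big)\!\cdot\pair(\Phi)\Big).
\]
By Assumption~\ref{assp: tau}, $\cos\theta_j^{(0)}\ge\sqrt{1-\tau^2}$, so $c_k\ge\omega(1-\tau^2)^{(r-1)/2}=:\underline c$. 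The perturbation is controlled in two ways, both obtained by expanding each $B_j^{(0)}=\sum_{s}\lambda^{(j)}_s w^{(j)}_s{w^{(j)}_s}^\top$ ($\le m_j$ terms, $\sum_s(\lambda^{(j)}_s)^2=1$, hence $\sum_s|\lambda^{(j)}_s|\le\sqrt{m_j}$) and reducing inner products of the form $\langle\pair(\Phi),\vec(W)\otimes_{j\ne k}b_j^{(0)}\rangle$ to rank‑one test pairs on which $|\langle\Phi(\cdot),\cdot\rangle|\le\|\Phi\|_{\rm ml}\le\psi$ on $\bs E(\psi)$: (i) an operator‑norm bound $\|\tilde\Phi_k\|_{\mathrm{op}}\le\psi\prod_{j\ne k}\sqrt{m_j}=\psi\sqrt{m/m_k}$ (take $W=uv^\top$), and (ii) a ``signal‑aligned'' bound $|\langle\tilde\Phi_k,R\rangle|\le\psi\prod_{j}\sqrt{m_j}=\psi\sqrt m$ for every PSD unit‑Frobenius $R$ of rank $\le m_k$ (expand $R$ in the same way).

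Next I analyze the PCA step: set $M:=\mathrm{Eig}_{m_k}(\tilde B_k)$, $\nu:=\|M\|_{\s{f}}$, so $B_k^{(1)}=M/\nu$. Since $\tilde B_k\succeq0$ by Lemma~\ref{lem:PSD-tildeBk}, $M=P\tilde B_kP$ with $P$ the orthogonal projector onto the top‑$m_k$ eigenspace of $\tilde B_k$ (which commutes with $\tilde B_k$), whence $M-c_k\hat B_k=c_k\big(P\hat B_kP-\hat B_k\big)+P\tilde\Phi_kP$, with $\|P\tilde\Phi_kP\|_{\s{f}}\le\sqrt{m_k}\,\|\tilde\Phi_k\|_{\mathrm{op}}\le\psi\sqrt m$ and $\|P\hat B_kP-\hat B_k\|_{\s{f}}\le\sqrt2\,\|P^\perp\hat B_k\|_{\s{f}}$. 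The crux is to bound $\|P^\perp\hat B_k\|_{\s{f}}$ \emph{without any condition number of $\hat B_k$}: with $\hat B_k=\sum_s\lambda_s w_sw_s^\top$ one has $\|P^\perp\hat B_k\|_{\s{f}}^2=\mathrm{tr}(\hat B_k^2P^\perp)=\sum_s\lambda_s^2\|P^\perp w_s\|^2$, and I claim $\lambda_s^2\|P^\perp w_s\|^2\le 4\|\tilde\Phi_k\|_{\mathrm{op}}^2/c_k^2$ for every $s$. If $c_k\lambda_s<2\|\tilde\Phi_k\|_{\mathrm{op}}$ use $\|P^\perp w_s\|\le1$; otherwise apply $P^\perp$ to $\tilde B_kw_s=c_k\lambda_s w_s+\tilde\Phi_kw_s$ and use $P^\perp\tilde B_kP=0$ to get $(P^\perp\tilde B_kP^\perp-c_k\lambda_sP^\perp)P^\perp w_s=P^\perp\tilde\Phi_kw_s$, where $P^\perp\tilde B_kP^\perp$ restricted to $\mathrm{range}(P^\perp)$ has all eigenvalues $\le\mu_{m_k+1}(\tilde B_k)\le\|\tilde\Phi_k\|_{\mathrm{op}}$ (Weyl's inequality, since $c_k\hat B_k$ has rank $\le m_k$ and is PSD), so the operator is invertible there with norm $\le(c_k\lambda_s-\|\tilde\Phi_k\|_{\mathrm{op}})^{-1}$. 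Summing over the $\le m_k$ values of $s$ gives $\|P^\perp\hat B_k\|_{\s{f}}^2\lesssim m_k\|\tilde\Phi_k\|_{\mathrm{op}}^2/c_k^2\le m\psi^2/c_k^2$; equivalently, writing $q:=\mathrm{tr}(\hat B_kP\hat B_kP)$, one has $q\ge 1-2\|P^\perp\hat B_k\|_{\s{f}}^2\ge 1-O(m\psi^2/c_k^2)$, a \emph{second‑order} defect in $\psi$.

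Finally I assemble both bounds. Since $B_k^{(1)}$ and $\hat B_k$ are PSD, $\cos\theta_k^{(1)}=\langle M,\hat B_k\rangle/\nu\ge0$, so $\theta_k^{(1)}\in[0,\pi/2]$. Computing the relevant quadratic forms exactly with $a:=c_kq$, $b:=\langle P\hat B_kP,\tilde\Phi_k\rangle$ ($|b|\le\psi\sqrt m$ by (ii), as $P\hat B_kP$ is PSD of rank $\le m_k$ with $\|P\hat B_kP\|_{\s{f}}\le1$), $c:=\|P\tilde\Phi_kP\|_{\s{f}}^2\le m\psi^2$, one gets $\langle M,\hat B_k\rangle=a+b$ and $\nu^2=\big((a+b)^2-b^2\big)/q+c$, hence
\[
\sin^2\theta_k^{(1)}=1-\frac{\langle M,\hat B_k\rangle^2}{\nu^2}=\frac{(1-q)(a+b)^2-b^2+qc}{(a+b)^2-b^2+qc}\;\le\;\frac{(1-q)(a+b)^2+qc}{a^2+2ab+qc}.
\]
The decisive feature is that the first‑order quantity $b$ enters the numerator only through $-b^2$ and the common second‑order terms, so the numerator is $O(m\psi^2)$; the denominator is $\gtrsim c_k^2$, where I use $c_k\ge\underline c$ together with $\tau<\tau_1(\tfrac\psi\omega)$: by the excerpt, $g>0$ precisely on $(\tau_0,\tau_1)$, so $\tau^2(1-\tau^2)^{r-1}>32m(\psi/\omega)^2$ there, forcing $(1-\tau^2)^{(r-1)/2}>4\sqrt{2m}\,\psi/\omega$, i.e.\ $c_k>4\sqrt{2m}\,\psi$, which keeps the denominator nondegenerate (the case $\tau\le\tau_0$ is handled similarly and more easily, since then $(1-\tau^2)^{(r-1)/2}$ is close to $1$). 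Tracking the constants gives $\sin\theta_k^{(1)}\le 4\sqrt{2m}\,\psi/(\omega(1-\tau^2)^{(r-1)/2})$, and the very same estimate bounds $\|b_k^{(1)}-b_k\|_{\s{f}}$, hence $1-\cos\theta_k^{(1)}=\tfrac12\|b_k^{(1)}-b_k\|_{\s{f}}^2\le 16m\psi^2/(\omega^2(1-\tau^2)^{r-1})$. For $\widehat\omega^{(1)}$, expand $\widehat\omega^{(1)}=(b_1^{(1)\top},\dots,b_r^{(1)\top})\cdot\pair(\sS_N)=\omega\prod_k\cos\theta_k^{(1)}+\rho$ with $\rho=(b_1^{(1)\top},\dots,b_r^{(1)\top})\cdot\pair(\Phi)$ and $|\rho|\le\psi\sqrt m$ by (ii); then $|\omega-\widehat\omega^{(1)}|\le\omega\big(1-\prod_k\cos\theta_k^{(1)}\big)+|\rho|\le\omega\sum_k(1-\cos\theta_k^{(1)})+\psi\sqrt m\le 16mr\psi^2/(\omega(1-\tau^2)^{r-1})+\psi\sqrt m$, which is the claimed estimate after dividing by $\omega$.

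The main obstacle is the condition‑number‑free control of $\|P^\perp\hat B_k\|_{\s{f}}$: a naive Davis–Kahan bound on the perturbed top‑$m_k$ eigenspace carries the smallest nonzero eigenvalue of $\hat B_k$ in the denominator, which can be arbitrarily small, whereas the weighting by $\lambda_s^2$ in the identity $\|P^\perp\hat B_k\|_{\s{f}}^2=\sum_s\lambda_s^2\|P^\perp w_s\|^2$ renders the ill‑conditioned directions harmless. A secondary (bookkeeping) difficulty is that the explicit constant $4\sqrt2$ requires keeping the quadratic‑form expansion of $\sin^2\theta_k^{(1)}$ exact — so that the first‑order noise contribution genuinely cancels rather than merely being absorbed into a crude bound on $M-c_k\hat B_k$ — and using the calibration $\tau<\tau_1$ to keep $c_k$ bounded below.
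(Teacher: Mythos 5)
Your proof follows the same skeleton as the paper's: the same signal-plus-noise decomposition $\pair(\sS_N)=\omega\,b_1\otimes\cdots\otimes b_r+\pair(\Phi)$, the same contraction identity $\tilde B_k=c_k\hat B_k+\tilde\Phi_k$ with $c_k=\omega\prod_{j\ne k}\cos\theta_j^{(0)}\ge\omega(1-\tau^2)^{(r-1)/2}$, the same operator-norm control $\|\tilde\Phi_k\|\le\psi\sqrt{m/m_k}$ obtained by expanding the $B_j^{(0)}$'s in eigenpairs and testing $\|\Phi\|_{\rm ml}\le\psi$ on rank-one pairs, and the same exact trigonometric identity at the end. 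Where you genuinely diverge is the treatment of the rank-$m_k$ truncation. The paper writes $\widehat B=\mathrm{Eig}_{m_k}(\tilde B_k)=c_k\hat B_k+\hat\Delta$, invokes Eckart--Young to get $\|\widehat B-\tilde B_k\|\le\|\tilde\Delta\|$, hence $\|\hat\Delta\|\le 2\|\tilde\Delta\|$, and then uses $\operatorname{rank}(\hat\Delta)\le 2m_k$ to convert to $\|\hat\Delta\|_{\s f}\le 2\sqrt{2m}\,\psi$; the bound $\sin\theta_k^{(1)}\le\|\hat\Delta\|_{\s f}/(c_k\|B_k\|_{\s f}-\|\hat\Delta\|_{\s f})$ then delivers the constant $4\sqrt{2}$ in two lines. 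You instead represent $M=P\tilde B_kP$ and run a condition-number-free eigenspace perturbation argument for $\|P^\perp\hat B_k\|_{\s f}$; that argument is correct and is a nice observation (it avoids Davis--Kahan's dependence on $\lambda_{m_k}(B_k)$ --- though note the paper's Eckart--Young route also avoids it, so nothing is gained here), but it buys you a second-order defect $1-q=O(m\psi^2/c_k^2)$ that the theorem does not actually need.

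Two concrete loose ends remain. First, the sentence ``tracking the constants gives $4\sqrt{2m}$'' is doing real work: with the intermediate bounds you state ($\|P^\perp\hat B_k\|_{\s f}\le 2\sqrt{m}\,\psi/c_k$, $1-q\le 8m\psi^2/c_k^2$, $|b|\le\psi\sqrt m$, $c\le m\psi^2$), a worst-case evaluation of your ratio in the boundary regime $\tau\uparrow\tau_1$ (where one only has $c_k>4\sqrt{2m}\,\psi$) gives a constant slightly above $4\sqrt2$; you would need to retain the $-b^2$ term and sharpen at least one of the intermediate estimates to certify the stated constant. Second, your route to the $\widehat\omega$ bound uses $1-\prod_k\cos\theta_k^{(1)}\le\sum_k(1-\cos\theta_k^{(1)})$ together with the claim that ``the very same estimate bounds $\|b_k^{(1)}-b_k\|$''; this does not follow from the $\sin$ bound alone, since $\|b_k^{(1)}-b_k\|=\sqrt{2(1-\cos\theta_k^{(1)})}\ge\sin\theta_k^{(1)}$, and without it your constant degrades from $16mr$ to $32mr$. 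It is repairable in one line via $\bigl\|\tfrac{u}{\|u\|}-\tfrac{v}{\|v\|}\bigr\|\le\tfrac{2\|u-v\|}{\max(\|u\|,\|v\|)}$ applied to $u=\widehat B$, $v=c_k B_k$, but the paper sidesteps the issue entirely by writing $\prod_k\cos\theta_k^{(1)}\ge\bigl(1-\sin^2_{\max}\bigr)^{r/2}\ge 1-\tfrac r2\sin^2_{\max}$, which yields $16mr$ directly.
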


Theorem~\ref{theorem:rank-1poweriteration} gives a \emph{deterministic} one-step error bound that depends only on two scalars computed from the fixed signal $\{A_k\}$ and the sample: the \emph{signal size} $\omega$ in \eqref{eq:omega}, and the \emph{effective noise level} \(\psi\) defined in~\eqref{eq:psi} via the event \(\bs E(\psi)=\{\|\Phi\|_{\rm ml}\le\psi\}\) with \(\Phi=\sigma^2\I_{\s v}+\sS_N-\Sigma\) (see~\eqref{eq:SNandPhi} and~\eqref{eq:spnorm}). In words: the first power iteration is accurate whenever the ratio \(\psi/\omega\) is small and the initialization angle \(\tau\) satisfies Assumption~\ref{assp: tau}.

\begin{ex}[Gaussian-design]\label{ex:gaussiandesign}
The bounds in Theorem~\ref{theorem:rank-1poweriteration} are deterministic given $\{A_k\}_{k = 1}^r$, but it is useful to understand their typical sizes when $A_k$'s are random.
Assume $A_k\in\R^{n_k\times m_k}$ have i.i.d.\ $N(0,1)$ entries (independent across $k$) and the aspect ratios $m_k/n_k$ stay bounded away from $1$.
Then, with high probability as $\min_k n_k\to\infty$, the singular values of $A_k$ satisfy
\begin{equation}\label{eq:gaussS4Frob}
c \sqrt{n_k} \le \sigma_1(A_k), \ldots, \sigma_{m_k}(A_k) \le C\sqrt{n_k}, 
\end{equation}
for universal constants $c,C>0$. 
Consequently, by Lemma~\ref{lem:normsequal},
\begin{equation}\label{eq:gaussOmega}
\|\bs A \| \;=\;\Theta_{\P}\!\Big(\sqrt{n}\Big) \text{ and } \omega \;=\;\prod_{k=1}^r \|B_k\|_{\s{f}}
\;=\;\Theta_{\P}\!\Big(n\sqrt{m}\Big).
\end{equation}
We now look at the behaviour of $\tfrac{\psi}{\omega}$. By definition, 
$$
\psi = \sigma^2 + 2\sigma^2\max\{\alpha_1,\sqrt{\alpha_1}\} + 2\|\bs A\|^2\max\{\alpha_2,\sqrt{\alpha_2}\} + 4\sigma\|\bs A\|\max\{\alpha_3,\sqrt{\alpha_3}\}. 
$$ Taking $\delta_1, \delta_2, \delta_3$ appropriately as in Corollary \ref{cor:HDcons}, we have
\[
\alpha_1 = \Theta\left(\frac{n_{\max} \log N}{N}\right), \alpha_2 = \Theta\left(\frac{m_{\max} \log N}{N}\right), \text{ and } \alpha_3 = \Theta\left(\frac{n_{\max} \log N}{N}\right).
\]
When $n_{\max} \lesssim \frac{N}{\log N}$ and fixed $\sigma^2$,  with high probability, we have,
\begin{align*}
\frac{\psi}{\omega} & \;\lesssim\; \frac{\sigma^2}{n\sqrt{m}} \;+\; \frac{\sigma^2}{n}\sqrt{\frac{n_{\max} \log N}{mN}}
\;+\; \,\sqrt{\frac{m_{\max}\log N}{mN}}
\;+\; \sigma\,\sqrt{\frac{n_{\max} \log N}{nmN}}\\
& \lesssim \frac{1}{\sqrt{m}} \max\left\{\frac{1}{n}, \sqrt{\frac{m_{\max} \log N}{N}}\right\}\xrightarrow\;0. 
\end{align*}
When $\tfrac{N}{\log N} \lesssim n_{\max}$ and fixed $\sigma^2$,  with high probability, we have,
\begin{align*}
\frac{\psi}{\omega} & \;\lesssim\; \frac{\sigma^2}{n\sqrt{m}} \;+\; \frac{\sigma^2n_{\max} \log N}{n\sqrt{m}N}
\;+\; \, \max \left\{\sqrt{\frac{m_{\max}\log N}{mN}}, \frac{m_{\max} \log N}{ \sqrt{m}N }\right\}
\;+\; \sigma\,\frac{n_{\max} \log N}{\sqrt{nm}N}\\
& \lesssim \frac{1}{\sqrt{m}} \max\left\{\frac{1}{n}, \sqrt{\frac{m_{\max}\log N}{N}}, \frac{m_{\max} \log N}{N}, \frac{n_{\max} \log N}{ \sqrt{n}N}\right\}, 
\end{align*}
which also converges to $0$ when $n_k$'s and $N$ diverge if we further assume the mode sizes and their latent dimensions are balanced, such as, $\tfrac{n}{n_{max}} \ge n_{\max}$ and $\tfrac{m}{m_{\max}} \ge m_{\max}$.

Hence Theorem~\ref{theorem:rank-1poweriteration} gives, after a single iteration and a feasible initialization (Assumption~\ref{assp: tau}),
\[
\sin\theta_k^{(1)} \;=\; o_{\P}(1)
\quad\text{and}\quad
\frac{|\widehat{\omega}^{(1)}-\omega|}{\omega} \;=\; o_{\P}(1),
\]
with the explicit rates shown in \eqref{equ: sin_theta_k_1}. Finally, we remark that the first term $\sigma^2$ in $\Phi$, or $\tfrac{\sigma^2}{n\sqrt{m}}$ in $\tfrac{\psi}{\omega}$ comes from the fact that we apply a rank-1 power iteration algorithm to estimate a tensor whose signal part has rank 2. We think this term could be potentially avoided after more careful algorithm design and then more subtle theoretical analysis, so that the estimator would be consistent when $N$ diverge and $n$ is given.  
\end{ex}

We now analyze the guarantees of the algorithm after subsequent runs. Define, for later use, functions giving the bounds in Theorem~\ref{theorem:rank-1poweriteration}
\begin{equation}\label{eq:f12}
f_1(\tau)
=\frac{\psi}{\omega}\frac{4\sqrt{2m}}{(1-\tau^2)^{\frac{r-1}{2}}}
\qquad\text{and}\qquad
f_2(\tau)
=\Big(\tfrac{\psi}{\omega}\Big)^{\!2}\frac{16mr}{(1-\tau^2)^{r-1}}+\frac{\psi}{\omega}\sqrt{m}\,.
\end{equation}
We now analyze when the one-step bound $f_1(\tau)$ is smaller than the initialization bound $\tau$. A direct corollary of Theorem~\ref{theorem:rank-1poweriteration} is that the bound on $\sin\theta^{(1)}_k$ improves over that on $\sin\theta^{(0)}_k$ if and only if
$f_1(\tau)\le \tau$, or, equivalently, $g(\tau)\ge 0$ with $g$ from~\eqref{eq:gfun}.
As argued earlier, under~\eqref{eq:assnoneptyint} the function $g$ is positive on an open interval $(\tau_0,\tau_1)$ with $\tau_0<\sqrt{1/r}<\tau_1$.

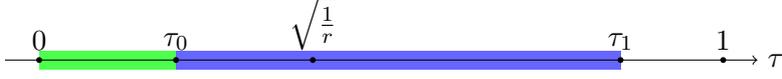
\begin{figure}[htp!]
    \centering
\begin{tikzpicture}[x=9cm,y=1cm]
  \def\xB{0.2} 
  \def\xD{0.4} 
  \def\xC{0.85} 
  \def\xE{1} 

  \fill[green!70] (0,-0.12) rectangle (\xB,0.12);   
  \fill[blue!60]  (\xB,-0.12) rectangle (\xC,0.12); 

  \draw[->] (-0.05,0) -- (1.05,0) node[right] {$\tau$};

  \fill (0,0)   circle (1.1pt);
  \fill (\xB,0) circle (1.1pt);
  \fill (\xD,0) circle (1.1pt);
  \fill (\xC,0) circle (1.1pt);
  \fill (\xE,0) circle (1.1pt);

  \node[above] at (0,0.0) {$0$};
  \node[above] at (\xB,0) {$\tau_0$};
  \node[above] at (\xD,0) {$\sqrt{\tfrac1r}$};
  \node[above] at (\xC,0) {$\tau_1$};
  \node[above] at (\xE,0) {$1$};
\end{tikzpicture}
\caption{Under \eqref{eq:assnoneptyint}, we have $f_1(\tau)\le \tau$ precisely for $\tau\in[\tau_0,\tau_1]$ (blue). Moreover, if $\tau$ lies in the green region $(0,\tau_0)$, then $f_1(\tau)$ also lies in $(0,\tau_0)$.}
\label{fig:placeholder}
\end{figure}

Under Assumption~\ref{assp: tau} we have $\tau<\tau_1$, which yields two regimes:

\smallskip
\noindent {(i) Contraction region:} If $\tau\in(\tau_0,\tau_1)$, then $f_1(\tau)<\tau$, so $\sin\theta^{(1)}_k<\sin\theta^{(0)}_k$ for all $k$.

\smallskip
\noindent {(ii) Noise floor region:} If $\tau<\tau_0$, then, since $f_1$ is strictly increasing on $(0,1)$,
\begin{equation}\label{eq:f1tau0}
f_1(\tau)\;<\;f_1(\tau_0)\;=\;\tau_0,    
\end{equation}
so the one-step bound lands in the interval $I_0=[0,\tau_0)$ and stays there thereafter. Thus, $\tau_0$ acts as a \emph{noise-determined floor} for this analysis: our proof technique, based on the inequality $f_1(\tau)\le\tau$ being equivalent to $g(\tau)\ge0$, cannot certify a bound below~$\tau_0$ without a sharper argument.\footnote{This phenomenon is standard for one-step spectral/power-type analyses: the certified error decreases until it hits a floor determined by the level of the deterministic control parameter (here encoded by $\psi/\omega$). The true error may well be smaller; the floor reflects the coarseness of the inequality, not a limitation of the algorithm per se.} For the reader’s convenience, the two regimes are depicted in Figure~\ref{fig:placeholder}.

\medskip
\noindent\textbf{Consequence for $\widehat\omega$.}
By Theorem~\ref{theorem:rank-1poweriteration}, if $\tau<\tau_0$, then
\[
\frac{|\omega - \widehat{\omega}^{(1)}|}{\omega}
\;\le\; f_2(\tau)\;=\;\frac{r}{2}f_1^2(\tau) + \frac{\psi}{\omega}\sqrt{m}.
\]
Thus, by \eqref{eq:f1tau0}, in the noise floor region we observe that
\[
\frac{|\omega - \widehat{\omega}^{(1)}|}{\omega}
\;\le\; \frac{r}{2}\,\tau_0^2 \;+\; \frac{\psi}{\omega}\sqrt{m}
\;=\; \frac{\psi}{\omega}\sqrt{m} \;+\; O\!\left((\tfrac{\psi}{\omega})^2\right).
\]

\begin{rem}[No incoherence needed]
Many tensor power iteration analyses assume \emph{incoherence} of factors (e.g., for a matrix with orthonormal columns $U\in\R^{n\times r}$, the condition $\max_i n\,\|e_i^\top U\|_2^2 \le \mu r$), which prevents ``spiky'' components and controls cross-term interference; see, e.g., \cite{anandkumar2014guaranteed}. In our \emph{rank-1} extraction, we work with the PSD matrices $B_k=A_kA_k^\top$ and their perturbed versions. The analysis relies only on spectral-norm control of these matrices (captured by the parameter $\psi$) and does not require any incoherence-type control on the singular vectors of $A_k$. In short: the rank-1 structure and PSD setting remove the usual need for incoherence.
\end{rem}

\begin{cor}\label{cor: cor_A_consistency}
Suppose Assumption~\ref{assp:scale}, Assumption~\ref{assp: tau} hold, and that $A_1,\ldots,A_r$ have full column rank. Let $\widehat A_k$ be the output after $L=1$ step. Then for each $k=1,\ldots,r$ there exists an orthogonal matrix $O_k\in\R^{m_k\times m_k}$ such that
\[
\frac{1}{\sqrt{n_k m_k}}\,
\|\widehat A_k - A_k O_k\|_{\s{f}}
\;\le\;
\frac{\big[f_2(\tau)+\sqrt{2}\,f_1(\tau)\big]\;\omega^{1/r}}
{2(\sqrt{2}-1)\,\sqrt{n_k m_k}\,\lambda_{m_k}(B_k)}\,,
\]
where $\lambda_{m_k}(B_k)$ denotes the smallest nonzero eigenvalue of $B_k$ and $f_1,f_2$ are given in~\eqref{eq:f12}.
\end{cor}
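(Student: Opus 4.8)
The plan is to reduce the claim to a Frobenius-norm perturbation bound for the Gram matrices $B_k=A_kA_k^\top$, and then convert that into a bound on the factors via a standard matrix-square-root (Procrustes-type) inequality. Throughout I work on the event $\bs E(\psi)$, on which Theorem~\ref{theorem:rank-1poweriteration} is available. Recall that the $L=1$ output satisfies $\widehat A_k\widehat A_k^\top=\widehat B_k=\widehat\omega^{1/r}\vec^{-1}(\widehat b_k)$ with $\widehat b_k=b_k^{(1)}$, $\widehat\omega=\widehat\omega^{(1)}$, while $A_kA_k^\top=B_k$; by Assumption~\ref{assp:scale} all the $\|B_k\|_{\s{f}}$ coincide and equal $\omega^{1/r}$, so $b_k=\vec(B_k)/\omega^{1/r}$, i.e.\ $B_k=\omega^{1/r}\vec^{-1}(b_k)$.

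First I would control $\|\widehat B_k-B_k\|_{\s{f}}$ using the one-step guarantee. Split
\[
\widehat B_k-B_k=\omega^{1/r}\bigl(\vec^{-1}(\widehat b_k)-\vec^{-1}(b_k)\bigr)+\bigl(\widehat\omega^{1/r}-\omega^{1/r}\bigr)\vec^{-1}(\widehat b_k).
\]
Since $\vec^{-1}$ is an isometry and $\|\widehat b_k\|=\|b_k\|=1$, the second summand has Frobenius norm $|\widehat\omega^{1/r}-\omega^{1/r}|=\omega^{1/r}\bigl|(\widehat\omega/\omega)^{1/r}-1\bigr|\le\omega^{1/r}\bigl|\widehat\omega/\omega-1\bigr|\le\omega^{1/r}f_2(\tau)$, using the elementary inequality $|t^{1/r}-1|\le|t-1|$ for $t>0$, $r\ge1$, together with the $\widehat\omega$-bound of Theorem~\ref{theorem:rank-1poweriteration}. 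For the first summand, $\|\vec^{-1}(\widehat b_k)-\vec^{-1}(b_k)\|_{\s{f}}=\|\widehat b_k-b_k\|=\sqrt{2(1-\cos\theta_k^{(1)})}$; since $B_k,\widehat B_k\succeq0$ we have $\cos\theta_k^{(1)}=\tr(B_k\widehat B_k)/(\|B_k\|_{\s{f}}\|\widehat B_k\|_{\s{f}})\ge0$, hence $\theta_k^{(1)}\in[0,\pi/2]$ and $1-\cos\theta_k^{(1)}\le\sin^2\theta_k^{(1)}$, so $\|\widehat b_k-b_k\|\le\sqrt2\,\sin\theta_k^{(1)}\le\sqrt2\,f_1(\tau)$ by \eqref{equ: sin_theta_k_1}. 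Combining, $\|\widehat B_k-B_k\|_{\s{f}}\le\omega^{1/r}\bigl(f_2(\tau)+\sqrt2\,f_1(\tau)\bigr)$.

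Next I would pass from $B_k$ to $A_k$. Because $A_k$ has full column rank, $\lambda_{m_k}(B_k)=\sigma_{m_k}(A_k)^2>0$, and both $A_k$ and $\widehat A_k=\widehat U_k\widehat\Lambda_k^{1/2}$ lie in $\R^{n_k\times m_k}$ — the latter because $\mathrm{rank}(\widehat B_k)\le m_k$ by Lemma~\ref{lem:PSD-tildeBk}. Applying a standard matrix-square-root perturbation inequality for symmetric low-rank factorizations to $\widehat B_k=\widehat A_k\widehat A_k^\top$ and $B_k=A_kA_k^\top$ yields, with $O_k$ the orthogonal polar factor of $A_k^\top\widehat A_k$, the bound $\|\widehat A_k-A_kO_k\|_{\s{f}}\le\|\widehat B_k-B_k\|_{\s{f}}/\bigl(2(\sqrt2-1)\lambda_{m_k}(B_k)\bigr)$. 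Substituting the estimate from the previous step and dividing both sides by $\sqrt{n_km_k}$ gives the announced inequality.

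The main obstacle is this last conversion step: turning a bound on $\|\widehat B_k-B_k\|_{\s{f}}$ into a bound on $\|\widehat A_k-A_kO_k\|_{\s{f}}$. This is precisely where the full-column-rank hypothesis is indispensable — if $A_k$ is rank deficient then $\lambda_{m_k}(B_k)=0$ and no factor-level bound can hold — and where one must check that the optimizing rotation $O_k$ exists and that $\widehat A_k$ and $A_k$ are genuinely comparable as matrices of the same shape even when $\widehat B_k$ is rank deficient. Everything else is bookkeeping layered on top of Theorem~\ref{theorem:rank-1poweriteration}.
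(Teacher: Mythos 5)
Your proposal is correct and follows essentially the same route as the paper: bound $\|\widehat B_k-B_k\|_{\s{f}}$ by $\bigl(f_2(\tau)+\sqrt2\,f_1(\tau)\bigr)\omega^{1/r}$ via the triangle inequality (combining the $\sin\theta_k^{(1)}$ bound, the $\widehat\omega$ bound with an elementary $r$-th-root inequality, and Assumption~\ref{assp:scale} to identify $\|B_k\|_{\s{f}}=\omega^{1/r}$), then convert to the factor level with the standard symmetric low-rank factorization perturbation bound — the paper cites Lemma~6 of \cite{ge2017no} for exactly the inequality you invoke.
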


\begin{ex}
   We continue the Gaussian design Example~\ref{ex:gaussiandesign}. Under the random-design model, w.h.p.\ we have
\[
\omega^{1/r}=\Theta\!\big(n_k\sqrt{m_k}\big)
\quad\text{and}\quad
\lambda_{m_k}(B_k)=\Theta(n_k), \text{ for } k \in [r], 
\]
so the prefactor in Corollary~\ref{cor: cor_A_consistency} simplifies to $\Theta(1/\sqrt{n_k})$. Hence
\[
\frac{1}{\sqrt{n_k m_k}}\,
\|\widehat A_k^{(1)}-A_k O_k\|_{\s{f}}
\;=\;
\Theta\!\Big(\frac{f_2(\tau)+\sqrt{2}\,f_1(\tau)}{\sqrt{n_k}}\Big)=\Theta\!\Big(\frac{\psi}{\omega\sqrt{n_k}}\Big),
\qquad\text{w.h.p.}
\]
In particular, if $\psi/\omega\to 0$ in probability under this Gaussian design, then $f_1(\tau),f_2(\tau)\to 0$ and the one-step estimator is consistent, with rate governed by the vanishing of $\psi/\omega$ and the factor $1/\sqrt{n_k}$.
\end{ex}

\subsection{Consistency of $\widehat{\sigma^2}$}

Recall the plug-in estimator
\[
   \widehat{\sigma^2}
   \;=\;\frac1n\operatorname{tr}\bigl(\sS_N-\sF_{\widehat{\bs B}}\bigr)
   \;=\;\sigma^2+\frac1n\tr J_1+\frac1n\tr J_2,
\]
where
\[
   J_1:=\sF_{\bs B}-\sF_{\widehat{\bs B}},
   \qquad
   J_2:=\sS_N-\Sigma .
\]
Hence
\(
   |\widehat{\sigma^2}-\sigma^2|
   \le \tfrac1n|\tr J_1|+\tfrac1n|\tr J_2|
\)
and in our analysis we focus on controlling the two trace terms.

\begin{thm}
\label{theorem: sigma_2_consistency} 
Assume Assumptions \ref{assp:scale} and \ref{assp: tau}, and work on
the high-probability event $\bs E(\psi)$ introduced in
\eqref{eq:psi}.  After one iteration of Algorithm~\ref{algorithm1}, with probability at least $ 1- \frac{1}{N^2 n^2}$,
\[
   |\widehat{\sigma^2}-\sigma^2|
   \;\le\;
   \frac{\sqrt m\,\omega}{n}\left[\left(1+2f_1(\tau)+\sqrt2\,f_2(\tau)\right)^r-1\right]
   + t_4 ,
\]
where $f_1,f_2$ are given in \eqref{eq:f12} and
\[
   t_4
   \;=\;
   \frac{m\|\Sigma\|+(n-m)\sigma^2/\|\Sigma\|}{n}
   \,
   \max\{\alpha_4, \sqrt{\alpha_4}\},
   \qquad
   \alpha_4=
   \frac{2\|\Sigma\|\log(Nn)}{0.145N\bigl(m\|\Sigma\|+(n-m)\sigma^2/\|\Sigma\|\bigr)}.
\]

\end{thm}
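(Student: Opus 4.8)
The plan is to control the two error terms in the decomposition $|\widehat{\sigma^2}-\sigma^2|\le\tfrac1n|\tr J_1|+\tfrac1n|\tr J_2|$ separately: the term $\tfrac1n|\tr J_1|$ by a deterministic argument on the event $\bs E(\psi)$ that feeds on the one-step guarantees of Theorem~\ref{theorem:rank-1poweriteration}, and the term $\tfrac1n|\tr J_2|$ by a concentration inequality for a quadratic form in Gaussians.

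\emph{Step 1: the trace of $J_1=\sF_{\bs B}-\sF_{\widehat{\bs B}}$.} By Lemma~\ref{lem:tuckermat} the flattenings $\mat(\sF_{\bs B})$ and $\mat(\sF_{\widehat{\bs B}})$ are Kronecker products of the $B_k$'s and the $\widehat B_k$'s, so $\tr(\sF_{\bs B})=\prod_{k=1}^r\tr(B_k)$ and $\tr(\sF_{\widehat{\bs B}})=\prod_{k=1}^r\tr(\widehat B_k)$. Each $B_k=A_kA_k^\top$ is PSD of rank at most $m_k$, and by Lemma~\ref{lem:PSD-tildeBk} so is each $\widehat B_k$; pairing with the projector onto the range and using Cauchy--Schwarz give $0\le\tr(B_k)\le\sqrt{m_k}\,\|B_k\|_{\s{f}}=\sqrt{m_k}\,\omega^{1/r}$ (the last step using Assumption~\ref{assp:scale} and \eqref{eq:omega}) and likewise $0\le\tr(\widehat B_k)\le\sqrt{m_k}\,\widehat\omega^{1/r}$. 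For the per-mode error I would split $\tr(\widehat B_k)-\tr(B_k)=\widehat\omega^{1/r}\langle\vec^{-1}(\widehat b_k)-\vec^{-1}(b_k),I_{n_k}\rangle+(\widehat\omega^{1/r}-\omega^{1/r})\langle\vec^{-1}(b_k),I_{n_k}\rangle$; the first tensor has rank at most $2m_k$, so its pairing with $I_{n_k}$ is at most $\sqrt{2m_k}\,\|\widehat b_k-b_k\|$, where $\|\widehat b_k-b_k\|^2=2(1-\cos\theta^{(1)}_k)\le 2\sin^2\theta^{(1)}_k$ because $\cos\theta^{(1)}_k\ge 0$, and Theorem~\ref{theorem:rank-1poweriteration} bounds $\sin\theta^{(1)}_k\le f_1(\tau)$ and $|\widehat\omega-\omega|\le\omega f_2(\tau)$; concavity of $x\mapsto x^{1/r}$ then gives $|\widehat\omega^{1/r}-\omega^{1/r}|\le\sqrt2\,\omega^{1/r}f_2(\tau)$. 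Collecting these, $|\tr(\widehat B_k)-\tr(B_k)|\le\sqrt{m_k}\,\omega^{1/r}\delta$ with $\delta:=2f_1(\tau)+\sqrt2\,f_2(\tau)$. Writing $a_k=\tr(B_k)$, $\widehat a_k=\tr(\widehat B_k)$, the telescoping identity $\prod_k\widehat a_k-\prod_k a_k=\sum_{k=1}^r a_1\cdots a_{k-1}(\widehat a_k-a_k)\widehat a_{k+1}\cdots\widehat a_r$, together with $0\le a_j\le\sqrt{m_j}\,\omega^{1/r}$ and $0\le\widehat a_j\le\sqrt{m_j}\,\omega^{1/r}(1+\delta)$, bounds each summand by $\sqrt m\,\omega\,\delta\,(1+\delta)^{r-k}$; summing the geometric series gives $|\tr J_1|\le\sqrt m\,\omega\,[(1+\delta)^r-1]$, which after division by $n$ is the first term in the claim.

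\emph{Step 2: the trace of $J_2=\sS_N-\Sigma$.} Since $\mu=0$, $\tr J_2=\tfrac1N\sum_{i=1}^N\bigl(\|X^{(i)}\|_{\s{f}}^2-\tr\Sigma\bigr)$ using $\tr(X^{(i)}\otimes X^{(i)})=\|X^{(i)}\|_{\s{f}}^2$ and $\E\|X\|_{\s{f}}^2=\tr\Sigma$. Writing $\vec(X^{(i)})=\mat(\Sigma)^{1/2}\xi_i$ with $\xi_i$ i.i.d.\ $N(0,I_n)$, this equals $\tfrac1N(\xi^\top M\xi-\tr M)$ for $M=I_N\otimes\mat(\Sigma)$ and $\xi\sim N(0,I_{Nn})$, with $\tr M=N\tr\Sigma$, $\|M\|_{\s{f}}^2=N\tr(\Sigma^2)$ and $\|M\|=\|\Sigma\|$. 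I would apply the Hanson--Wright inequality for Gaussian chaos (Lemma~\ref{lem: hanson-wright}) and then invoke Corollary~\ref{cor:specSig} for the spectrum of $\Sigma$: exactly $m$ of its eigenvalues lie in $[\sigma^2,\|\Sigma\|]$ and the remaining $n-m$ equal $\sigma^2$, so $\tr(\Sigma^2)\le m\|\Sigma\|^2+(n-m)\sigma^4$, which rearranges into the quantity $\|\Sigma\|\bigl(m\|\Sigma\|+(n-m)\sigma^2/\|\Sigma\|\bigr)$ appearing in $t_4$ (up to the stated normalisation). Choosing the confidence level $(Nn)^{-2}$ --- the source of the $\log(Nn)$ in $\alpha_4$ and of the constant $0.145$ --- and dividing by $n$ produces exactly the term $t_4$ in the two regimes $\alpha_4\le 1$ and $\alpha_4>1$. (Alternatively, one may split $\|X^{(i)}\|_{\s{f}}^2-\tr\Sigma$ into signal, noise, and cross parts as in the proof of Theorem~\ref{theorem:concentration_spectral}.)

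\emph{Conclusion and main obstacle.} Combining Steps~1 and 2 by the triangle inequality, on the intersection of the assumed event $\bs E(\psi)$ (from \eqref{eq:psi}, \eqref{eq:SNandPhi}) with the good event of the Hanson--Wright step, yields the theorem, the failure probability $(Nn)^{-2}$ coming only from Step~2. The routine parts are the two concentration/spectral citations; the delicate part is Step~1, namely pushing the one-step control $\sin\theta^{(1)}_k\le f_1(\tau)$ and $|\widehat\omega-\omega|/\omega\le f_2(\tau)$ through the \emph{product} identity $\tr(\sF_{\bs B})=\prod_k\tr(B_k)$. The crux is that the rank-$m_k$ truncation in the update \eqref{equ: b_k_l_hat} lets each trace be controlled only up to a factor $\sqrt{m_k}$, so the natural per-mode error scale is $\sqrt{m_k}\,\omega^{1/r}$ rather than $\tr(B_k)$ itself; carrying this bookkeeping consistently through the telescoping, and absorbing the mismatch between $\widehat\omega^{1/r}$ and $\omega^{1/r}$ via concavity, is what produces the precise constants $2f_1(\tau)+\sqrt2\,f_2(\tau)$ and the prefactor $\sqrt m\,\omega/n$.
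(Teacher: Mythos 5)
Your proposal is correct and follows essentially the same route as the paper: the same split $|\widehat{\sigma^2}-\sigma^2|\le\tfrac1n|\tr J_1|+\tfrac1n|\tr J_2|$, the same use of the one-step bounds $f_1,f_2$ together with the rank-$2m_k$ Cauchy--Schwarz and the Kronecker trace identity $\tr(\sF_{\bs B})=\prod_k\tr(B_k)$ for $J_1$, and the same Hanson--Wright application to the quadratic form with matrix $I_N\otK\mat(\Sigma)$ and the spectral bound $\tr(\Sigma^2)\le m\|\Sigma\|^2+(n-m)\sigma^4$ for $J_2$. The only cosmetic difference is in Step~1: the paper first bounds $\|\widehat B_k-B_k\|_{\s{f}}\le(f_2(\tau)+\sqrt2 f_1(\tau))\omega^{1/r}$ (using the identity $|x-y|\le|x^r-y^r|/y^{r-1}$ rather than concavity) and then takes traces, which cleanly avoids the $\widehat\omega^{1/r}$ versus $\omega^{1/r}$ bookkeeping in your direct split of $\tr(\widehat B_k)-\tr(B_k)$; both yield the same constant $2f_1(\tau)+\sqrt2 f_2(\tau)$.
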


The first term bounds $\tfrac1n|\tr J_1|$; it depends on
$\omega=\prod_k\|B_k\|_{\s{f}}$ and on the
post-iteration angles through $f_1,f_2$.
The second term $t_4$ controls $\tfrac1n|\tr J_2|$ via a Hanson–Wright
inequality; notice that $\alpha_4\asymp (Nm)^{-1}\log(Nn)$, so
$t_4=\tilde O\bigl(\sqrt{m/N}\bigr)$ as long as $Nm\to\infty$.

Suppose $r$ and $\sigma^2$ are fixed, $N\gtrsim m_{\max}$, and
$\|A_k\|_{\mathrm{op}}\asymp\sqrt{n_k}$ (e.g.\ Gaussian loading
matrices).  Consider the first case in Example \ref{ex:gaussiandesign} when $N/\log N \gg n_{\max}$, we have
$f_1(\tau)\asymp f_2(\tau)\asymp\sqrt{m_{\max}\log N/(Nm)}$ if $n$ is large enough, and 
\[
   \frac{\sqrt m\,\omega}{n}\left[
   \bigl(1+2f_1+\sqrt2f_2\bigr)^r-1\right]
   =\tilde O\!\left(\sqrt{\frac{m_{\max}m}{N}}\right), 
\]
which leads to 
\(
   |\widehat{\sigma^2}-\sigma^2|
   =\tilde O\!\bigl(\sqrt{m_{\max}m/N}\bigr),
\)
so $\widehat{\sigma^2}$ is consistent whenever
$N/\log N\gg \max\{m m_{\max}, n_{\max}\}$. The other case in Example \ref{ex:gaussiandesign} can be argue similarly with more careful assumptions on the balance of the tensor modes.

Finally, if the core factors are column-orthogonal ($A_k^\top A_k=I_{m_k}$ for all $k$), then $B_k=A_kA_k^\top$ is an orthogonal projector and therefore
\[
\|\sF_{\bs B}\|=1,
\qquad
\Sigma=\sigma^2 I+\sF_{\bs B},
\qquad
\|\Sigma\|=\sigma^2+1\asymp 1.
\]
Substituting this into the definition of $\alpha_4$ and $t_4$ in Theorem~\ref{theorem: sigma_2_consistency} yields
$$
\alpha_4
=\frac{2\|\Sigma\|\log(Nn)}{0.145N\bigl(m\|\Sigma\|+(n-m)\sigma^2/\|\Sigma\|\bigr)}
=\tilde O\!\Big(\frac{1}{Nn}\Big),
$$
and
$$
t_4
=\frac{m\|\Sigma\|+(n-m)\sigma^2/\|\Sigma\|}{n}\cdot
\max\{\alpha_4,\sqrt{\alpha_4}\}
=\tilde O\!\Big(\frac{1}{\sqrt{Nn}}\Big).
$$
Thus the sampling fluctuation term $\tfrac1n|\tr J_2|$ attains the standard parametric rate. The estimation term $\tfrac1n|\tr J_1|$ scales as
\[
\frac{\sqrt m\,\omega}{n}\Bigl[\bigl(1+2f_1(\tau)+\sqrt{2}\,f_2(\tau)\bigr)^r-1\Bigr],
\]
which under the Gaussian-loading calibration in this subsection is
$\tilde O\!\bigl(\sqrt{m_{\max}m/N}\bigr)$. Hence with orthogonal factors, the \emph{sampling} contribution is already parametric, and the accuracy of $\widehat{\sigma^2}$ after one iteration is driven by the post-iteration alignment (via $f_1,f_2$). If the factors were known (so $J_1\equiv 0$), the plug-in estimator would be parametric; with estimated factors, stronger initialization (smaller $\tau$) or additional iterations reduces $\tfrac1n|\tr J_1|$ toward the same $(Nn)^{-1/2}$ scale.

\section{Numerical examples}\label{sec:experiments}
We study the numerical performance of the proposed probabilistic tensor PCA estimators given by the EM algorithm and the rank-$1$ power-iteration method. Throughout the section we evaluate factor estimation quality with the same Procrustes-aligned, size-normalized error.

\paragraph{Error metric (used in all cases).}
For each mode $k\in[r]$ we report
\[
\mathrm{err}_k
\;=\;
\min_{O_k^\top O_k=I_{m_k}}
\frac{\bigl\lVert \widehat A_k O_k - A_k \bigr\rVert_{\mathrm F}}
     {\sqrt{n_k m_k}},
\qquad
\mathrm{Err}\;=\;\frac{1}{r}\sum_{k=1}^r \mathrm{err}_k.
\]
Here $\|\cdot\|_{\mathrm F}/\sqrt{n_k m_k}$ is the entrywise root-mean-square error; the Procrustes step removes the intrinsic right-rotation ambiguity, so $\mathrm{err}_k$ measures the per-entry discrepancy \emph{modulo rotation}. We standardize the truth to $\|A_k\|_{\mathrm F}=\sqrt{n_k m_k}$ in all simulations, making errors comparable across $(n_k,m_k)$ and consistent with our other experiments.

\medskip
\noindent\textbf{Case 1 (EM algorithm).}
We set $r=3$ with equal mode sizes $n_1=n_2=n_3=n\in\{5,15\}$. For each $n$ we consider $m_1=m_2=m_3=m\in\{\tfrac{2}{5}n,\tfrac{3}{5}n\}$ and noise $\sigma^2\in\{0.1,1,10\}$; the mean tensor is fixed at $\nu=0$.

\emph{Data generation.}
For each replication, draw $A_k^{\mathrm{raw}}\in\mathbb R^{n\times m}$ with i.i.d.\ $N(0,1)$ entries, write $A_k^{\mathrm{raw}}=U_kD_kV_k^\top$, and set $A_k=U_kD_k$ (right singulars fixed to identity). Rescale each factor so that $\|A_k\|_{\mathrm F}=\sqrt{nm}$.

\emph{Sampling.}
Generate $N=15$ observations $X^{(i)}=\mathbf A\!\cdot Z^{(i)}+\varepsilon^{(i)}$ with
$Z^{(i)}\sim\mathcal N(0,I_{\boldsymbol m})$ and
$\varepsilon^{(i)}\sim\mathcal N(0,\sigma^2 I_{\boldsymbol n})$.

\emph{Estimation.}
Run the fast EM routine (Section~\ref{sec:EM}) with an HOSVD warm start, tolerance $10^{-3}$, and at most $100$ iterations, initialized at the true $\sigma^2$. For each scenario we report $\mathrm{Err}$ averaged over $10$ replications and its $95\%$ CI in Table \ref{tab:EM-sim}. Standard sanity checks have been applied; e.g. that the algorithm increases the log-likelihood with each step.

\begin{table}[ht]
\centering
\begin{tabular}{ccccc}
\hline
$n$ & $m$ & $\sigma^2$ & mean error & 95\% CI \\
\hline
 5 & 2 & 0.1 & 0.366 & [0.348, 0.384] \\
 5 & 2 & 1.0 & 0.218 & [0.199, 0.237] \\
 5 & 2 & 10  & 0.158 & [0.137, 0.179] \\
\hline
 5 & 3 & 0.1 & 0.411 & [0.397, 0.425] \\
 5 & 3 & 1.0 & 0.232 & [0.216, 0.248] \\
 5 & 3 & 10  & 0.162 & [0.146, 0.177] \\
\hline
15 & 6 & 0.1 & 0.612 & [0.609, 0.615] \\
15 & 6 & 1.0 & 0.465 & [0.459, 0.470] \\
15 & 6 & 10  & 0.334 & [0.327, 0.341] \\
\hline
15 & 9 & 0.1 & 0.633 & [0.631, 0.636] \\
15 & 9 & 1.0 & 0.494 & [0.488, 0.500] \\
15 & 9 & 10  & 0.321 & [0.314, 0.328] \\
\hline
\end{tabular}
\caption{EM results (Case~1): mean Procrustes-aligned error $\mathrm{Err}$ with $\sqrt{nm}$ normalization over $10$ replications.}
\label{tab:EM-sim}
\end{table}

\emph{Discussion.}
For $n=5$, EM attains $\mathrm{Err}\approx 0.16$–$0.41$ with only $N=15$ samples; for $n=15$, the mean error ranges from $\approx 0.33$ to $0.63$. A salient (and initially somewhat counterintuitive) pattern is that the error \emph{decreases} as $\sigma^2$ increases from $0.1$ to $10$. We observed the same phenomenon for the rank-$1$ power-iteration estimator (see Figure~\ref{fig:simu3} below). Two complementary explanations help understand this:

\noindent\textbf{Finite-sample regularization.} With small $N$ and moderate dimensions, the EM updates implicitly contain ridge-like terms that scale with $\sigma^2$; see, e.g. Remark~\ref{rem:impl-reg}. Larger $\sigma^2$ stabilizes the resulting inverses and reduces variance in the factor updates. The variance reduction can dominate the bias introduced by stronger regularization, lowering the \emph{Frobenius} estimation error in finite samples.

\noindent\textbf{Noise-aided separation in the lifted moment.} In the pairwise-lifted view used by the rank-$1$ method, $\pair(\Sigma)$ decomposes into two rank-$1$ components: a signal term $\vec(A_1A_1^\top)\otimes\vec(A_2A_2^\top)\otimes\vec(A_3A_3^\top)$ and an isotropic noise term $\sigma^2\,\vec(I_{n_1})\otimes\vec(I_{n_2})\otimes\vec(I_{n_3})$. As $\sigma^2$ grows, these components become more mutually incoherent, which makes the tensor power iteration better able to “peel off” the noise component and recover the signal component cleanly. This increased separability carries over to EM through a better-conditioned E-step. We stress that this is a \emph{finite-$N$} effect: for fixed $(n,m)$ and increasing $N$, higher $\sigma^2$ would eventually hurt; but at $N=15$ the conditioning/regularization benefits dominate. 
\medskip

\noindent\textbf{Case 2 (rank-$1$ power iteration: convergence).}
We fix $r=3$, $(n_1,n_2,n_3)=(15,15,15)$, $(m_1,m_2,m_3)=(3,3,3)$, $\sigma^2=1$, $N=400$, and vary the number of power-iteration steps $L\in\{1,\dots,10\}$. Factors are generated as in Case~1. Starting from random initialization, we report the same error $\mathrm{Err}$ and the estimate of $\sigma^2$, averaged over $50$ replications; see Figure~\ref{fig:simu2}. The three mode-wise error curves are nearly indistinguishable and stabilize after two iterations, in line with the theory that random-initialization effects dissipate quickly.

\begin{figure}[!htb]
\centering
\includegraphics[width=0.495\textwidth]{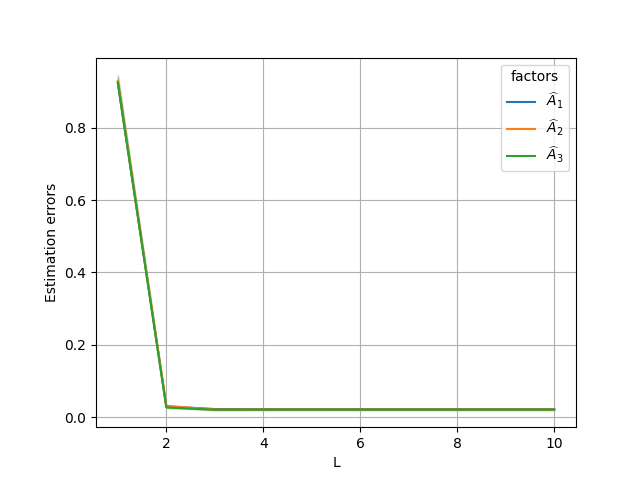}\;
\includegraphics[width=0.495\textwidth]{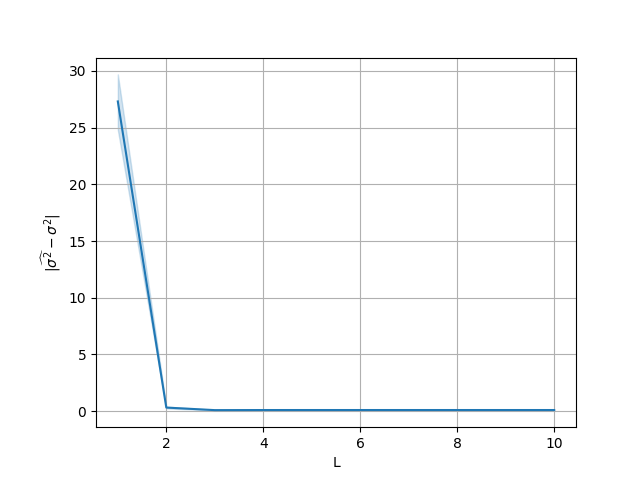}
\caption{Case~2 (power iteration): $\mathrm{Err}$ (left) and $\widehat{\sigma^2}$ (right) versus iteration $L$, with $95\%$ CIs over 50 replications. The error metric is the Procrustes-aligned $\sqrt{n_k m_k}$-normalized error defined at the start of the section.}
\label{fig:simu2}
\end{figure}

\medskip
\noindent\textbf{Case 3 (rank-$1$ power iteration: effect of $N$ and $\sigma^2$).}
We fix $r=3$, $(n_1,n_2,n_3)=(40,60,80)$, $(m_1,m_2,m_3)=(2,3,4)$ and vary $(\sigma^2,N)\in\{1,4,9,16,25\}\times\{100,200,300,400,500\}$. Factors are generated as before and scaled to $\|A_k\|_{\mathrm F}=\sqrt{n_k m_k}$. We report $\mathrm{Err}$ for each mode and the relative error $|\widehat{\sigma^2}-\sigma^2|/\sigma^2$, averaged over $50$ replications (Figure~\ref{fig:simu3}). As $N$ increases, all errors decrease. Consistent with Case~1, the mode-wise curves remain stable across $\sigma^2$ and tighten for larger modes; the relative error of $\widehat{\sigma}^2$ decreases for larger $\sigma^2$, reflecting easier separation of the isotropic component in the lifted moment.

\begin{figure}[!htb]
\centering
\includegraphics[width=0.49\textwidth]{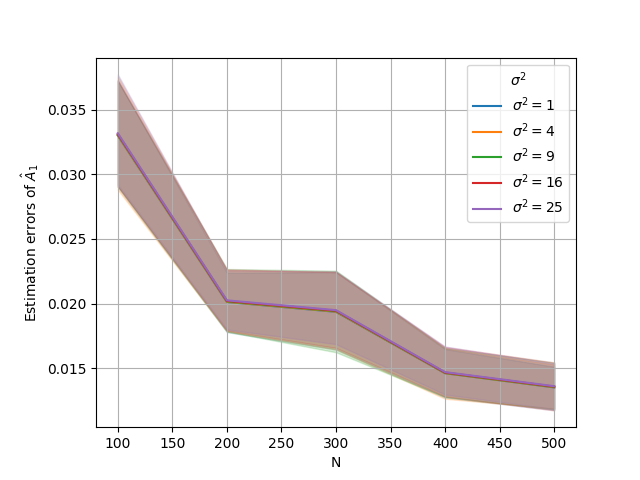}
\includegraphics[width=0.49\textwidth]{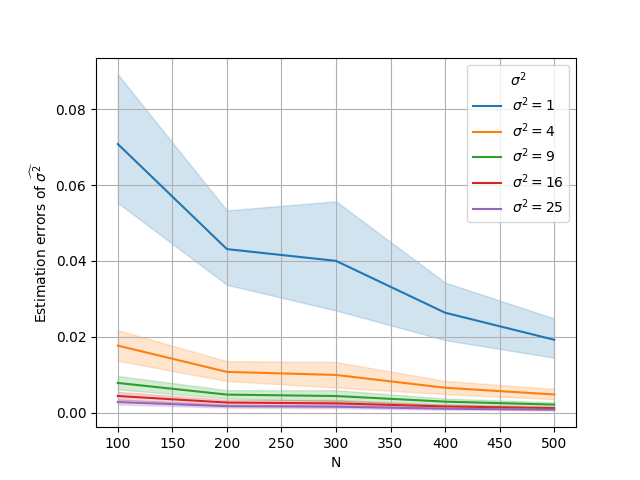}\\
\includegraphics[width=0.49\textwidth]{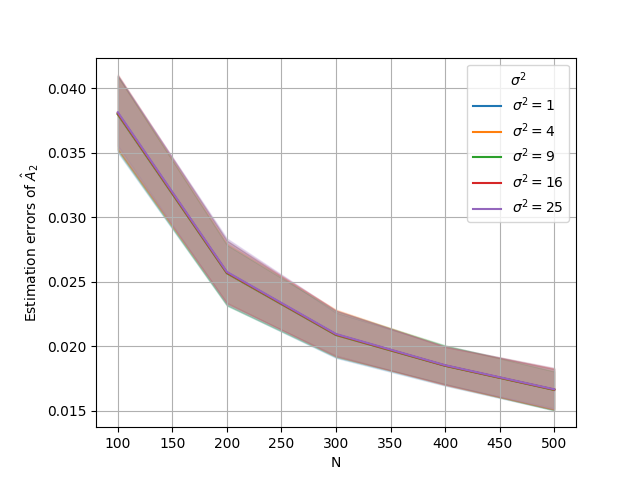}
\includegraphics[width=0.49\textwidth]{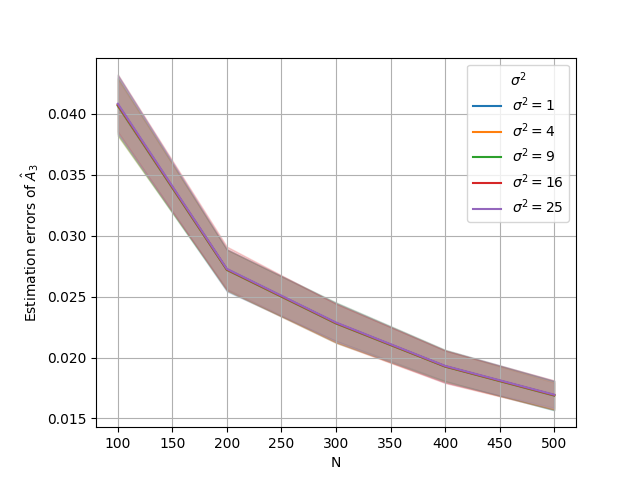}
\caption{Case~3 (power iteration): averaged $\mathrm{Err}$ for $\widehat A_1,\widehat A_2,\widehat A_3$ and the relative error of $\widehat{\sigma^2}$ across $(\sigma^2,N)$, with $95\%$ CIs over 50 replications. The error metric is the Procrustes-aligned $\sqrt{n_k m_k}$-normalized error defined at the start of the section.}
\label{fig:simu3}
\end{figure}

\section{Conclusion}\label{sec:conclusion}
In this paper, we propose a probabilistic tensor PCA model, extending the vector format PPCA model to general tensors. The proposed model is general and flexible, as it includes many tensor factor models as special cases and can be easily adapted to symmetric or potentially other tensor subspaces by fixing the bases of the subspaces. An sufficient and necessary condition for model identifiability and the existence of the MLE have been proved. Moreover, we proposed an EM algorithm and rank-$1$ power iteration algorithm for effective estimation. Particularly, we provide a non-trivial non-asymptotic statistical guarantees for the rank-$1$ power iteration algorithm, which further implies the asymptotic consistency of its estimators even just after the first iteration. The feasibility of these algorithms and their statistical guarantees are further supported by simulation studies. 

\paragraph*{Acknowledgments} 

We would like to thank Ming Yuan for  a helpful discussion during his visit to the University of Toronto in 2024. 

\putbib[ref.bib]
\end{bibunit}

\newpage
\begin{bibunit}[apalike]

\appendix

\section{Proofs of Section~\ref{sec:tensors}}\label{app:tensors_proofs}

\noindent \textbf{Proof of Lemma \ref{lem:normsequal}. }Let $\mathbf A=(A_1,\dots,A_r)$ and recall that $\sF_{\mathbf A}(T)=\mathbf A\cdot T$. Lemma~\ref{lem:tuckermat} gives the matrix–flattening
\[
   \mat(\sF_{\mathbf A}) \;=\;
   A_r \otimes_K A_{r-1} \otimes_K \cdots \otimes_K A_1 .
\]
For any two matrices $B,C$ the identity
$\|B\otimes_K C\|=\|B\|\,\|C\|$ holds for the spectral norm; an
$r$-fold induction therefore yields
\[
\|\sF_{\bs A}\|\;=\;   \|\mat(\sF_{\mathbf A})\|
   \;=\;\prod_{k=1}^{r}\|A_k\|.
\]
For the multilinear norm computation, take arbitrary rank-one tensors  
$\bs u=u_1\otimes\cdots\otimes u_r$ and  
$\bs v=v_1\otimes\cdots\otimes v_r$
with $\|u_k\|=\|v_k\|=1$ for all $k$. Since Tucker map respects rank-1 tensors,
\[
   \langle\sF_{\mathbf A}(\bs u),\bs v\rangle
     =\prod_{k=1}^{r} v_k^{\!\top}A_k u_k
     \;\le\;\prod_{k=1}^{r}\|A_k\|,
\]
where each inner product is bounded above by the largest singular
value $\|A_k\|$.  Equality is attained by choosing $u_k$ and $v_k$
to be the right and left singular vectors associated with
$\sigma_{\max}(A_k)=\|A_k\|$.  \qed

\noindent\textbf{Proof of Proposition \ref{prop:spmap}. }For the right implication, fix a collection $\mathbf A=(A_1,\ldots,A_r)$ and the corresponding Tucker map $\sF_{\bs A}(T)=\mathbf A\cdot  T$.  Note 
$$(\sF_{\bs A})_{i_1,\ldots, i_r, j_1,\ldots, j_r}\;=\;[\sF_{\bs A}( \bs e^{j_1,\ldots,j_r})]_{i_1,\ldots,i_r}\;=\;(A_1)_{i_1,j_1}\cdots (A_r)_{i_r,j_r},$$
and so
$$
(\pair(\sF_{\bs A}))_{\overline{i_1,j_1},\ldots,\overline{i_r,j_r}}\;=\;(A_1)_{i_1,j_1}\cdots (A_r)_{i_r,j_r}.
$$
In other words, $\pair(\sF_{\bs A})=\vec(A_1)\otimes \cdots\otimes \vec(A_r)$, and so it has rank 1. The left implication is similar. If $\pair(\sF)$ has rank 1 then 
$$(\sF( \bs e^{j_1,\ldots,j_r}))_{i_1,\ldots,i_r}\;=\;(A_1)_{i_1,j_1}\cdots (A_r)_{i_r,j_r}$$
for some matrices $A_1,\ldots,A_r$, which implies that $\sF=\sF_{\bs A}$, a Tucker map. \qed

\noindent\textbf{Proof of Lemma \ref{lem:tuckermat}. }
    Both $\mat(\sF_{\bs A})$ and  $A_r\otK\cdots \otK A_2\otK A_1$ are matrices with $n$ rows and $m$ columns. Take tuples $(i_1,\ldots,i_r)$ and $(j_1,\ldots,j_r)$. Equation \eqref{eq:matF} gives us an expression for $\mat(\sF)_{\overline{i_1,\ldots,i_r},\overline{j_1,\ldots,j_r}}$. By the proof of Proposition~\ref{prop:spmap}, for Tucker maps this gives
    $$
\mat(\sF)_{\overline{i_1,\ldots,i_r},\overline{j_1,\ldots,j_r}}\;=\;\prod_{k=1}^r (A_k)_{i_k,j_k}.
    $$
   It remains to show that the $(\overline{i_1, \ldots,i_r}, \overline{j_1, \ldots, j_r})$-th entry of $A_r\otK\cdots \otK A_2\otK A_1$ is also $\prod_{k=1}^r (A_k)_{i_k,j_k}$. First note that, by Definition~\ref{def:Kron}, 
    $
    (A_2\otK A_1)_{\overline{i_1,i_2},\overline{j_1,j_2}}=(A_2)_{i_2, j_2}(A_1)_{i_1,j_1}$. Thus the claim holds if $r=2$. In general, we will argue by induction. Suppose the claim is true for $r\leq s-1$ with $s\geq 2$. Directly by definition of the ranking function,
$$\overline{\overline{i_1,\ldots,i_{s-1}},i_s}\;=\;\overline{i_1,\ldots,i_s}.
    $$
Consequently, when $r = s$,    
\begin{eqnarray*}
    (A_s\otK A_{s-1}\otK \cdots \otK  A_1)_{\overline{i_1,\ldots,i_s},\overline{j_1,\ldots,j_s}}&=&(A_s\otK (A_{s-1}\otK\cdots \otK A_1))_{\overline{\overline{i_1,\ldots,i_{s-1}},i_s},\overline{\overline{j_1,\ldots,j_{s-1}},j_s}}\\    
    &=&(A_s)_{i_s,j_s} (A_{s-1}\otK\cdots \otK A_1)_{\overline{i_1,\ldots,i_{s-1}},\overline{j_1,\ldots,j_{s-1}}}. 
\end{eqnarray*}
By the inductive assumption, $(A_{s-1}\otK\cdots \otK A_1)_{\overline{i_1,\ldots,i_{s-1}},\overline{j_1,\ldots,j_{s-1}}}=\prod_{k=1}^{s-1}(A_k)_{i_k,j_k}$, which gives $(A_{s}\otK\cdots \otK A_1)_{\overline{i_1,\ldots,i_{s}},\overline{j_1,\ldots,j_{s}}}=\prod_{k=1}^{s}(A_k)_{i_k,j_k}$. Comparing with the formula for $\mat(\sF)$, we conclude the proof.   \qed

\noindent\textbf{Proof of Lemma \ref{lem:sqSigma}. }
   The $(\overline{i_1,\ldots,i_r}, \overline{j_1,\ldots,j_r})$-th entry of $\mat(\sM)$ is $\<\bs e^{i_1,\ldots,i_r},\sM \bs e^{j_1,\ldots,j_r}\>$, which is equal to $\<\sM \bs e^{i_1,\ldots,i_r},\bs e^{j_1,\ldots,j_r}\>$ or equivalently the $(\overline{j_1,\ldots,j_r}, \overline{i_1,\ldots,i_r})$-th entry of $\mat(\sM)$ due to the fact that $\sM$ is a self-adjoint operator. For the second statement, we simply observe that the equation $\sM T=\lambda T$ is equivalent to $\mat(\sM)\vec(T)=\lambda\vec(T)$.  \qed

\noindent\textbf{Proof of Lemma \ref{lem:linearGauss}. }
For every $T \in \V$, we have $\langle \sF Z, T \rangle = \langle Z, \sF^* T \rangle$, which has a Gaussian distribution with mean $\langle \mu, \sF^* T \rangle = \langle \sF\mu, T \rangle$ and variance $\langle \Sigma \sF^* T, \sF^* T \rangle = \langle \sF \Sigma \sF^* T, T \rangle$. \qed 

\noindent\textbf{Proof of Proposition \ref{prop:allold}. }
    By definition, $\<X,T\>$ has Gaussian distribution with mean $\<\mu,T\>$ and variance $\<\Sigma T,T\>$. Now note that 
    $\<X,T\>=\vec(X)^\top \vec(T)$, $\<\mu,T\>=\vec(\mu)^\top \vec(T)$.  Also, by Lemma~\ref{lem:matop} and Lemma~\ref{lem:sqSigma},
    $$
    \<\Sigma T,T\>\;=\;\vec(\Sigma T)^\top \vec(T)\;=\;\left(\mat(\Sigma)\vec(T)\right)^\top \vec(T)\;=\;\vec(T)^\top \mat(\Sigma)\vec(T).
    $$
    Showing that for any $\bs v\in \R^{n}$, $\bs v^\top \vec(X)$ is Gaussian with mean $\bs v^\top \vec(\mu)$ and variance $\bs v^\top \mat(\Sigma)\bs v$. The conclusion follows. \qed

\section{Computational details for the EM algorithm}\label{sec:comp_details}

In this section, we provide some technical details of the computational implementations needed for the EM-algorithm. We refer to Section~\ref{sec:tensorcomp} for necessary definitions. 

To optimize \eqref{eq:M} with respect to $A_k$, we first express it as a function of $A_k$ only. Denote by $\bs A^{(-k)}$ the the collection of matrices obtained from $\bs A$ by replacing $A_k$ with $I_{m_k}$. Denote by $\bs A^{(k)}$ the the collection of matrices obtained from $\bs A$ by replacing each $A_l$ for $l \neq k$ with $I_{m_l}$. For example,
$$
\bs A^{(-1)}\;=\;(I_{m_1},A_2,\ldots,A_r),\qquad \bs A^{(1)}\;=\;(A_1,I_{m_2},\ldots,I_{m_r}),
$$
and  $\bs A$ is a composition of maps defined by $\bs A^{(-1)}$ and $\bs A^{(1)}$. Thus, for every $k$,
$$
\sF_{\bs A} (\nu+\bs E_i)\;=\;\bs A^{(k)} \bs A^{(-k)}(\nu+\bs E_i).
$$
Denoting by $\cM_k$ the flattening map at mode $k$, also referred to the $k$-th mode matricization, we get
$$
\cM_k \left(\sF_{\bs A} (\nu+\bs E_i)\right)\;=\;A_k \cM_k\left(\bs A^{(-k)}(\nu+\bs E_i)\right)\;=\;A_k \bs U_{k,i},
$$
where we denote $$\bs U_{k,i}\;:=\;\cM_k\left(\bs A^{(-k)}(\nu+\bs E_i)\right).$$
Since the Frobenius norm is invariant under different reshaping of the tensor, we have
$$
\tfrac1N\sum_{i=1}^N \|X^{(i)}-\mu-\sF_{\bs A}\bs E_i\|^2_{\s{f}}\;=\;\tfrac1N\sum_{i=1}^N \|\cM_{k}(X^{(i)})-A_k \bs U_{k,i}\|^2_{\s{f}}.
$$
Next, we rewrite the trace term $\tr(\sF_{\bs A}\sV \sF^*_{\bs A})$ in \eqref{eq:M} explicitly in terms of $A_k$.
\begin{lem}
    We have
    $$
    \tr(\sF_{\bs A}\sV \sF_{\bs A}^*)\;=\;\left(\vec(A_1^\top A_1)^\top,\ldots,  \vec(A_r^\top A_r)^\top\right)\cdot \pair(\sV).
    $$
\end{lem}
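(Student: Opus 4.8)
The plan is to reduce the trace to a Frobenius-type contraction of two $2r$-tensors and then match coordinates with the right-hand side, being careful only about the colexicographic conventions behind $\vec$, $\mat$ and $\pair$.

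First I would use the cyclicity of the trace — valid here since $\tr(\sM):=\tr(\mat(\sM))$ and $\mat$ is multiplicative by Lemma~\ref{lem:matop}(iii) — to write $\tr(\sF_{\bs A}\sV\sF_{\bs A}^*)=\tr(\sF_{\bs A}^*\sF_{\bs A}\,\sV)$. The point is that $\sF_{\bs A}^*\sF_{\bs A}$ is again a Tucker map: $\sF_{\bs A}^*=\sF_{\bs A^\top}$ (Section~\ref{sec:self-adjoint}) and Tucker maps compose factor-wise, so $\sF_{\bs A}^*\sF_{\bs A}=\sF_{\bs C}$ with $\bs C=(C_1,\ldots,C_r)$, $C_k:=A_k^\top A_k$. (Equivalently, $\mat(\sF_{\bs A}^*\sF_{\bs A})=\mat(\sF_{\bs A})^\top\mat(\sF_{\bs A})=(A_r^\top A_r)\otK\cdots\otK(A_1^\top A_1)$ via Lemma~\ref{lem:tuckermat} and the mixed-product rule for $\otK$.)

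Next I would expand $\tr(\sF_{\bs C}\sV)=\tr\!\bigl(\mat(\sF_{\bs C})\mat(\sV)\bigr)=\sum_{\overline i,\overline j}\mat(\sF_{\bs C})_{\overline i,\overline j}\,\mat(\sV)_{\overline j,\overline i}$ in coordinates, using $\mat(\sF_{\bs C})_{\overline i,\overline j}=\prod_{k=1}^r (C_k)_{i_k,j_k}$ (the proof of Lemma~\ref{lem:tuckermat}/Proposition~\ref{prop:spmap}) and $\mat(\sV)_{\overline j,\overline i}=\sV_{j_1,\ldots,j_r,i_1,\ldots,i_r}$, which yields
\[
\tr(\sF_{\bs A}\sV\sF_{\bs A}^*)=\sum_{i,j}\Bigl(\prod_{k=1}^r (C_k)_{i_k,j_k}\Bigr)\sV_{j_1,\ldots,j_r,i_1,\ldots,i_r}.
\]
On the other side, by the definitions of $\vec$ on matrices and of $\pair$,
\[
\bigl(\vec(C_1)^\top,\ldots,\vec(C_r)^\top\bigr)\cdot\pair(\sV)
=\sum_{j,j'}\Bigl(\prod_{k=1}^r (C_k)_{j_k,j_k'}\Bigr)\sV_{j_1,\ldots,j_r,j_1',\ldots,j_r'}.
\]
Since each $C_k=A_k^\top A_k$ is symmetric, $(C_k)_{i_k,j_k}=(C_k)_{j_k,i_k}$, and renaming $(j,i)\mapsto(j,j')$ in the first display identifies it with the second, completing the proof.

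The only genuine obstacle is bookkeeping: one must check that the ranking function \eqref{eq:ranking} used for $\vec$ on an $m_k\times m_k$ matrix is the same one that pairs the corresponding input/output modes of $\sV$ inside $\pair(\sV)$, so that the two matrix indices of $C_k$ line up with the $k$-th pair of indices of $\sV$; once the conventions are fixed this is immediate, and the symmetry of $C_k$ is what lets the output/input blocks of $\sV$ be contracted in either order.
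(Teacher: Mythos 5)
Your proposal is correct and follows essentially the same route as the paper: both reduce via cyclicity of the trace to a contraction of $\sF_{\bs A}^*\sF_{\bs A}$ (a Tucker map with factors $A_k^\top A_k$) against $\sV$, and both identify this with the right-hand side through the $\pair$ reshaping — the paper phrases this as reshape-invariance of the Frobenius inner product plus Proposition~\ref{prop:spmap}, while you verify the same identity in explicit coordinates. The index bookkeeping and the use of symmetry of $A_k^\top A_k$ are handled correctly.
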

\begin{proof}
First
    $$
    \tr(\sF_{\bs A}\sV \sF_{\bs A}^*)\;=\;\tr(\sF_{\bs A}^*\sF_{\bs A}\sV )\;=\;\<\sF_{\bs A}^*\sF_{\bs A},\sV\>,
    $$
    where the inner product is taken in the space $\R^{m_1\times \cdots\times m_r\times m_1\times \cdots \times m_r}$. Since it is invariant under tensor reshaping, we get
    $$
    \tr(\sF_{\bs A}\sV \sF_{\bs A}^*)\;=\;\<\pair(\sF_{\bs A}^*\sF_{\bs A}),\pair(\sV)\>.
    $$
    By Proposition~\ref{prop:spmap}, $\pair(\sF_{\bs A}^*\sF_{\bs A})$ is the outer product $\vec(A_1^\top A_1)\otimes\cdots \otimes \vec(A_r^\top A_r)$ and the result follows.
\end{proof}
Using this lemma, we can rewrite for any $k=1,\ldots,r$, 
$$\tr(\sF_{\bs A}\sV \sF_{\bs A}^*)
\;=\;\vec(A_k^\top A_k)^\top \left(\vec(A_1^\top A_1)^\top,\ldots,I_{m_k},\ldots  \vec(A_r^\top A_r)^\top\right)\cdot \pair(\sV).
$$
Reshape the $m_k^2$-dimensional vector $(\vec(A_1^\top A_1)^\top,\ldots,I_{m_k},\ldots  \vec(A_r^\top A_r)^\top)\cdot \pair(\sV)$ into an $m_k\times m_k$ matrix (using $\vec^{-1}:\R^{m_k^2}\to \R^{m_k\times m_k}$) and denote this by $W_k$. Then $\tr(\sF_{\bs A}\sV \sF_{\bs A}^*)=\tr(A_k W_k A_k^\top)$ and we can rewrite \eqref{eq:M} as a function of $A_k$
$$
\tfrac1N\sum_{i=1}^N \|\cM_{k}(X^{(i)})-A_k \bs U_{k,i}\|^2_{\s{f}}+\tr(A_k W_k A_k^\top).
$$
The optimal solution is easily found to satisfy
\begin{align}
\label{eq: solution_A_k}
\widehat A_k\;=\; \left(\frac1N \sum_{i=1}^N\cM_{k}(X^{(i)}) \bs U_{k,i}^\top\right)\cdot(W_k+\tfrac1N \sum_{i=1}^N  \bs U_{k,i}\bs U_{k,i}^\top)^{-1},
\end{align}
where the matrix $W_k+\tfrac1N \sum_{i=1}^N  \bs U_{k,i}\bs U_{k,i}^\top$ is positive-definite almost surely under continuous data.

The update of $\nu$, defined in \eqref{eq:nu} and the update of $\sigma^2$ in \eqref{eq:M2} can be vectorized in a similar way.

\section{Technical proofs for Section~\ref{sec:algorithm}}\label{app:proofsalg}

\begin{lem}
\label{lem: hanson-wright}
(Hanson-Wright inequality for Gaussian Chaos). Suppose that  $\xi \sim N_{\s{v}}(0, \I_{\s{v}})$, and $\sM\in \mathcal L(\V,\V)$ assumed non-zero. Then $\E \<\sM(\xi),\xi\>=\tr(\sM)$ and, for any $t\ge 0$, we have
\begin{align*}
\P\left( |\<\sM\xi,\xi\>- \tr(\sM)| > t  \right) \;\le\; 2 \exp \left( - c_0 \min\left\{ \frac{t^2}{\|\sM\|_{\s{f}}^2}, \frac{t}{\|\sM\|}\right\}\right)\quad\mbox{with}\qquad c_0:=0.145. 
\end{align*}
Moreover, for the one-side version of the inequality, we can remove the coefficient $2$ on the right hand side. 
\end{lem}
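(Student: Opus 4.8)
The plan is to vectorize and reduce to the classical Hanson--Wright inequality for Gaussian quadratic forms in $\R^n$ with $n=n_1\cdots n_r$. By Proposition~\ref{prop:allold} and $\mat(\I_{\s{v}})=I_n$, the vector $g:=\vec(\xi)$ is standard Gaussian, $g\sim N_n(0,I_n)$. Writing $M:=\mat(\sM)$ and using $\langle S,T\rangle=\vec(S)^\top\vec(T)$ together with Lemma~\ref{lem:matop}(i), we get
\[
\langle \sM\xi,\xi\rangle
=\vec(\sM\xi)^\top\vec(\xi)
=\bigl(\mat(\sM)\vec(\xi)\bigr)^\top\vec(\xi)
=g^\top M^\top g
=g^\top M g,
\]
the last step because a scalar equals its transpose, so that only the symmetric part $M_{\mathrm{s}}:=\tfrac12(M+M^\top)$ of $M$ enters the quadratic form. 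Since $\E[g^\top M g]=\tr(M)$ for $g\sim N_n(0,I_n)$ and $\tr(M)=\tr(\mat(\sM))=\tr(\sM)$ by definition, the mean formula $\E\langle\sM\xi,\xi\rangle=\tr(\sM)$ follows immediately. It remains to bound the fluctuations of $g^\top M g$ about $\tr(M)$.

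For the tail, I would invoke (or reprove) the Gaussian-chaos Hanson--Wright inequality: for $g\sim N_n(0,I_n)$ and any $n\times n$ matrix $M$,
\[
\P\bigl(|g^\top M g-\tr M|>t\bigr)
\;\le\;2\exp\!\Bigl(-c_0\min\bigl\{t^2/\|M\|_{\s{f}}^2,\ t/\|M\|\bigr\}\Bigr),
\qquad c_0=0.145.
\]
The self-contained route computes the moment generating function: with $M$ replaced by $M_{\mathrm{s}}$, with eigenvalues $\lambda_1,\dots,\lambda_n$, one has $\E\exp(s(g^\top M g-\tr M))=\prod_i e^{-s\lambda_i}(1-2s\lambda_i)^{-1/2}$ for $0<s<1/(2\|M\|)$, and the elementary bound $-s\lambda-\tfrac12\log(1-2s\lambda)\le s^2\lambda^2/(1-2s\|M\|)$, summed over $i$, gives $\E\exp(s(g^\top M g-\tr M))\le\exp\bigl(s^2\|M\|_{\s{f}}^2/(1-2s\|M\|)\bigr)$ (and the symmetric counterpart with $-s$). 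A Chernoff bound optimized over $s$, split into the regimes $t\le\|M\|_{\s{f}}^2/\|M\|$ and $t>\|M\|_{\s{f}}^2/\|M\|$, yields the stated Bernstein-type tail, and tracking the numerical factors through the optimization produces $c_0=0.145$. Translating back via $\|M\|_{\s{f}}=\|\sM\|_{\s{f}}$ and $\|M\|=\|\sM\|$ (definitions of Section~\ref{sec:tensorcomp}), and noting $\|M_{\mathrm{s}}\|_{\s{f}}\le\|M\|_{\s{f}}$, $\|M_{\mathrm{s}}\|\le\|M\|$, which only strengthens the exponent, gives the lemma's inequality.

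The one-sided refinement is then immediate: the two-sided probability is the sum of the two one-sided tails $\P(g^\top M g-\tr M>t)$ and $\P(\tr M-g^\top M g>t)$, and each already satisfies the same exponential bound with prefactor $1$, since the Chernoff argument above uses only one sign of $s$ at a time. The main obstacle is purely one of bookkeeping rather than ideas: the reduction to a Euclidean Gaussian quadratic form is routine, but certifying the \emph{precise} constant $c_0=0.145$ demands either citing a reference that records exactly this value or carefully carrying the numerical constants through the elementary log-bound and the two-regime Chernoff optimization, so as to land on $0.145$ and not a looser constant.
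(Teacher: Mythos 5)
Your reduction is exactly the paper's: vectorize via $\langle\sM\xi,\xi\rangle=\vec(\xi)^\top\mat(\sM)\vec(\xi)$ with $\vec(\xi)\sim N_n(0,I_n)$, $\|\sM\|=\|\mat(\sM)\|$, $\|\sM\|_{\s{f}}=\|\mat(\sM)\|_{\s{f}}$, and then invoke the Euclidean Hanson--Wright inequality; the paper simply cites \cite{moshksar2021absolute} and \cite{moshksar2024refining} for the constant $\approx 0.1457$ rather than reproving it. The one caveat is your self-contained MGF sketch: the elementary bound $-s\lambda-\tfrac12\log(1-2s\lambda)\le s^2\lambda^2/(1-2s\|M\|)$ followed by the two-regime Chernoff optimization yields $c_0=1/8=0.125$, not $0.145$, so the stated constant genuinely requires the refined analysis in the cited references (a point you correctly flag at the end); everything else, including the one-sided refinement and the passage to the symmetric part of $\mat(\sM)$, is correct.
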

The proof of this result follows immediately from the standard Hanson-Wright inequality as   stated in Lemma~1 of  \cite{moshksar2021absolute} and Proposition~1 of \cite{moshksar2024refining}, which improved the constant from $0.125$ to about $0.1457$. To see this, note that
$$
\<\sM \xi,\xi\>-\tr(\sM)\;=\;\vec(\xi)^\top \mat(\sM)\vec(\xi)-\E \left(\vec(\xi)^\top \mat(\sM)\vec(\xi)\right),
$$
and $\vec(\xi)\sim N_n(0,I_n)$ with $\|\sM\|=\|\mat(\sM)\|$ and $\|\sM\|_{\s{f}}=\|\mat(\sM)\|_{\s{f}}$ by definition.

\begin{rem}\label{rem:HansonWrightN}
    Consider now independent random samples $\xi_1,\ldots,\xi_N$ from $N_{\s{v}}(0,\I_{\s{v}})$. Then Lemma~\ref{lem: hanson-wright} assures that 
    \begin{align*}
\P\left( \Big|\tfrac1N\sum_{i=1}^N\<\sM \xi_i , \xi_i\>- \tr(\sM)\Big| > t  \right) \;\le\; 2 \exp \left( - c_0 N \min\left\{ \frac{t^2}{\|\sM\|_{\s{f}}^2}, \frac{t}{\|\sM\|}\right\}\right).
\end{align*}
This is because $\sum_{i = 1}^N \langle M \xi_i, \xi_i \rangle$ can be rewritten as a matrix quadratic form 
$$
\left(\vec(\xi_1)^\top, \ldots, \vec(\xi_N)^\top\right) \left(I_{N} \otK \mat(M)\right) \left(\vec(\xi_1)^\top, \ldots, \vec(\xi_N)^\top\right)^\top.
$$
\end{rem}

For all the proofs in this section, we rely on the following construction. Consider the set $\B_{\s{v}}$ as defined in \eqref{eq:Bv}. The components of $\B_{\s{v}}$ are the $\ell_2$-unit balls in each $\R^{n_k}$ for $k \in [r]$.  Fix $\epsilon>0$, and let $\{v_1^{(1)},\ldots  v_1^{(h_1)}\}, \ldots, \{v_{r}^{(1)}, \ldots, v_{r}^{(h_{r})}\}$ be $\epsilon$-covers of these balls. Then the covering numbers satisfy $h_k \le (\frac{2}{\epsilon} +1)^{n_k} $, for $k \in [r]$; see, for instance, Example~5.8 in \cite{Wainwright_2019}. Let
\begin{equation}\label{eq:Ncover}
\cN_{\s{v}}\;=\; \left\{\bs v=v_1\otimes \cdots \otimes v_r\in \V:\;v_1\in \{v_1^{(1)},\ldots  v_1^{(h_1)}\},\ldots,v_r\in \{v_{r}^{(1)}, \ldots, v_{r}^{(h_{r})}\}\right\}    
\end{equation}
be the corresponding cover of $\B_{\s{v}}$. We have $|\cN_{\s{v}}|\leq (\tfrac{2}{\epsilon}+1)^{\sum_k n_k}$. Similarly, we can define $N_{\s{u}}$ with $|\cN_{\s{u}}|\leq (\tfrac{2}{\epsilon}+1)^{\sum_k m_k}$.

\begin{lem}
\label{lemma: isotropic_concentration}
Let $\eps_1, \ldots, \eps_N$ be \textit{i.i.d} tensors following  $N_{\s{v}} (0, \sigma^2 \I_{\s{v}})$, and denote $c_0=0.145$. We have 
\begin{align*}
& \quad \P\left(\Big\| \tfrac{1}{N}\sum_{i = 1}^N\eps_i \otimes \eps_i - \sigma^2 \I_{\s{v}}\Big\|_{\rm ml} > t\right)\;\le\; (10 r)^{2\sum_{k=1}^r n_k}\exp \left( - {c_0N}\min\{\tfrac{t^2}{4\sigma^4},\tfrac{t}{2\sigma^2}\}\right).
\end{align*}
\end{lem}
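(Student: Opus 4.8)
The plan is to control $\|\sF\|_{\rm ml}$ for $\sF:=\tfrac1N\sum_{i=1}^N\eps_i\otimes\eps_i-\sigma^2\I_{\s{v}}\in\cL(\V,\V)$ by combining a covering argument over rank-one test tensors with the Hanson--Wright inequality. Recall from \eqref{eq:spnorm} that $\|\sF\|_{\rm ml}=\max_{\bs u,\bs v\in\B_{\s{v}}}\langle\sF(\bs u),\bs v\rangle$. Since $(\eps\otimes\eps)(T)=\langle\eps,T\rangle\,\eps$ and $\I_{\s{v}}$ is the identity, for rank-one $\bs u,\bs v$ one has $\langle\sF(\bs u),\bs v\rangle=\tfrac1N\sum_{i=1}^N\langle\eps_i,\bs u\rangle\langle\eps_i,\bs v\rangle-\sigma^2\langle\bs u,\bs v\rangle$. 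Fix $\epsilon>0$, take the product net $\cN_{\s{v}}$ from \eqref{eq:Ncover} with $|\cN_{\s{v}}|\le(\tfrac2\epsilon+1)^{\sum_k n_k}$, and anticipate choosing $\epsilon=(3/2)^{1/(2r)}-1$, for which $(1+\epsilon)^{2r}=\tfrac32$ and one checks $\tfrac2\epsilon+1\le 10r$.

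For the pointwise bound, fix $\bs u,\bs v\in\cN_{\s{v}}$, put $\xi_i:=\eps_i/\sigma\sim N_{\s{v}}(0,\I_{\s{v}})$, and let $\sM_{\bs u,\bs v}:=\tfrac12(\bs u\otimes\bs v+\bs v\otimes\bs u)$, which is self-adjoint with $\langle\sM_{\bs u,\bs v}(\xi),\xi\rangle=\langle\xi,\bs u\rangle\langle\xi,\bs v\rangle$ and $\tr(\sM_{\bs u,\bs v})=\langle\bs u,\bs v\rangle$. Using $(a\otimes b)(c\otimes d)=\langle b,c\rangle\,a\otimes d$, a short computation gives $\|\sM_{\bs u,\bs v}\|_{\s{f}}^2=\tfrac12\bigl(\langle\bs u,\bs v\rangle^2+\|\bs u\|_{\s{f}}^2\|\bs v\|_{\s{f}}^2\bigr)\le 1$ and hence $\|\sM_{\bs u,\bs v}\|\le 1$, because all factor norms are $\le 1$ on $\B_{\s{v}}$. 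Writing $\langle\sF(\bs u),\bs v\rangle=\sigma^2\bigl(\tfrac1N\sum_i\langle\sM_{\bs u,\bs v}\xi_i,\xi_i\rangle-\tr(\sM_{\bs u,\bs v})\bigr)$ and invoking the one-sided form of the averaged Hanson--Wright inequality (Lemma~\ref{lem: hanson-wright}, Remark~\ref{rem:HansonWrightN}) yields $\P(\langle\sF(\bs u),\bs v\rangle>s)\le\exp\bigl(-c_0 N\min\{s^2/\sigma^4,\,s/\sigma^2\}\bigr)$ for all $s\ge 0$; the degenerate case $\sM_{\bs u,\bs v}=0$, which occurs only when a factor of $\bs u$ or $\bs v$ vanishes, contributes value $0$ and is harmless.

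It remains to pass from the net to the full norm. The map $(u_1,\dots,u_r,v_1,\dots,v_r)\mapsto\langle\sF(u_1\otimes\cdots\otimes u_r),v_1\otimes\cdots\otimes v_r\rangle$ is linear in each of its $2r$ arguments, and $|\langle\sF(\bs w),\bs w'\rangle|$ is at most $\|\sF\|_{\rm ml}$ times the product of the factor norms of $\bs w,\bs w'$. Taking a maximizer of $\|\sF\|_{\rm ml}$, replacing each of its $2r$ factors by the nearest net point plus a perturbation of norm $\le\epsilon$, and expanding multilinearly gives $\|\sF\|_{\rm ml}\le\max_{\bs u,\bs v\in\cN_{\s{v}}}\langle\sF(\bs u),\bs v\rangle+\bigl((1+\epsilon)^{2r}-1\bigr)\|\sF\|_{\rm ml}$, hence $\|\sF\|_{\rm ml}\le 2\max_{\bs u,\bs v\in\cN_{\s{v}}}\langle\sF(\bs u),\bs v\rangle$ with the chosen $\epsilon$. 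Therefore $\{\|\sF\|_{\rm ml}>t\}\subseteq\{\exists\,\bs u,\bs v\in\cN_{\s{v}}:\langle\sF(\bs u),\bs v\rangle>t/2\}$, and a union bound over the $\le(\tfrac2\epsilon+1)^{2\sum_k n_k}\le(10r)^{2\sum_k n_k}$ pairs at level $s=t/2$, together with $\min\{(t/2)^2/\sigma^4,(t/2)/\sigma^2\}=\min\{t^2/(4\sigma^4),t/(2\sigma^2)\}$, gives the stated inequality. The main obstacle is this last discretization step: one must track constants so that the net cardinality lands exactly on $(10r)^{2\sum_k n_k}$ and the loss factor $2$ inside the exponent is absorbed — which is precisely why the one-sided Hanson--Wright bound, with no leading factor $2$, is used — and one must verify the two norm bounds on $\sM_{\bs u,\bs v}$; the rest is routine.
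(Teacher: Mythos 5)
Your proposal is correct and follows essentially the same route as the paper's proof: the same product $\epsilon$-net with $\epsilon=(3/2)^{1/(2r)}-1$, the same multilinear expansion yielding $\|\sF\|_{\rm ml}\le 2\max_{\bs u,\bs v\in\cN_{\s v}}\langle\sF(\bs u),\bs v\rangle$, and the same reduction of each pointwise bound to Hanson--Wright for the rank-one quadratic form (the paper applies Lemma~\ref{lem: hanson-wright} directly to $\bs u\otimes\bs v$ rather than its symmetrization, but both give $\|\sM\|_{\s f},\|\sM\|\le 1$ and hence the identical exponent). The only place you compress a nontrivial step is the asserted numerical inequality $\tfrac{2}{\epsilon}+1\le 10r$, which the paper verifies via a short monotonicity argument; it is true, but worth writing out.
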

\begin{proof}
Denote $\sT = \frac{1}{N}\sum_{i = 1}^N\eps_i \otimes \eps_i-\sigma^2\I_{\s{v}}$ and let $(\hat{\bs u},\hat{\bs v})\in \B_{\s{v}}\times \B_{\s{v}}$ be the  optimizer in the definition of $\|\sT\|_{\rm ml}$ in \eqref{eq:spnorm}. Consider the covering $\cN_{\s{v}}$ of $\B_{\s{v}}$ as defined in \eqref{eq:Ncover}. By construction, there exists $(\bs u,\bs v)\in \cN_{\s{v}}\times \cN_{\s{v}}$ such that $\|u_1 - \hat u_1\| \le \epsilon, \ldots, \|u_r - \hat u_r\| \le \epsilon, \|v_1 - \hat v_{1}\| \le \epsilon, \ldots, \|v_r - \hat v_{r}\|\le \epsilon$. Therefore, for this $(\bs u,\bs v)$,
$$
\left\langle \sT (\hat{\bs u}),\hat{\bs v} \right\rangle \;=\; \left\<\sT(\bigotimes_{k=1}^r \hat u_k),\bigotimes_{k=1}^r \hat v_k\right\>\; = \;\quad \left \<\sT\Big(\bigotimes_{k=1}^r (u_k+(\hat u_k-u_k))\Big),\bigotimes_{l=1}^r ( v_k + (\hat v_k-v_k))\right\>. 
 $$
 Using multilinearity of the tensor product, we can expand this last term onto $2^{2r}$ terms. The first term is $\<\sT(\bs u),\bs v\>$. The last term is
 $$
 \left\<\sT \left(\bigotimes_{k=1}^r (\hat u_k-u_k)\right),\bigotimes_{k=1}^r (\hat v_k-v_k)\right\>\;\leq\;(\prod_{k=1}^r \|\hat u_k-u_k\| \|\hat v_k-v_k\|) \cdot \|\sT\|_{\rm ml}\;\leq\;\epsilon^{2r}\|\sT\|_{{\rm ml}}.
 $$
 We deal with the remaining terms in a similar way to get
$$ \langle \sT(\hat{\bs u}),\hat{\bs v} \rangle \; \le\; \langle \sT(\bs u),\bs v \rangle  + \left[\sum_{\ell=1}^{2r}\binom{2r}{\ell} \epsilon^\ell\right] \|\sT\|_{\rm ml} \;=\;\langle \sT(\bs u),\bs v \rangle + \left[ (1+\epsilon)^{2r} -1 \right] \|\sT\|_{\rm ml}.
$$
Taking $\epsilon = (\frac{3}{2})^{\frac{1}{2r}} - 1$ yields $(1+\epsilon)^{2r} -1 = \frac{1}{2}$, and so, there exists $(\bs u,\bs v)\in \cN_{\s{v}}\times \cN_{\s{v}}$ such that $\|\sT\|_{\rm ml} \le 2\<\sT(\bs u), \bs v\>$. This leads to
\begin{equation}\label{eq:Tmlboundaux1}
\P\left(\left\| \sT \right\|_{\rm ml} > t\right)\;\leq\;\P(\max_{\bs u\in \cN_{\s{v}},\bs v\in \cN_{\s{v}}} \<\sT(\bs u), \bs v\>>\tfrac{t}{2})\;\leq\;\sum_{\bs u\in \cN_{\s{v}}}\sum_{\bs v\in \cN_{\s{v}}} \P( \<\sT(\bs u), \bs v\>>\tfrac{t}{2}).    
\end{equation}

Let $\sT_i=\eps_i\otimes \eps_i-\sigma^2 \I_{\s{v}}$ so that $\sT=\tfrac1N\sum_{i=1}^N \sT_i$. For each $i=1,\ldots,N$, and taking $\sM=\bs u\otimes \bs v$ we get
$$
\<\sT_i(\bs u),\bs v\>\;=\;\<(\eps_i\otimes \eps_i)\bs u,\bs v\>-\sigma^2\<\bs u,\bs v\>\;=\;\<\sM\eps_i,\eps_i\>-\sigma^2\tr(\sM).
$$
By Lemma~\ref{lem: hanson-wright}, we have
$$
\P(\<\sT_i(\bs u), \bs v\>>\tfrac{t}{2})\; = \;\P(\<\sM \eps_i,\eps_i\>-\sigma^2\tr(\sM)>\tfrac{t}{2})\;\leq\;\exp\left(-c_0\min\{\frac{t^2}{4\sigma^4\|\sM\|^2_{\s{f}}},\frac{t}{2\sigma^2\|\sM\|}\}\right).
$$ Moreover, $\|\sM\|^2_{\s{f}}=\|\sM\|=1$. By Remark~\ref{rem:HansonWrightN}, it follows that 
\begin{equation}\label{eq:aux2}
\P(\<\sT(\bs u), \bs v\>>\tfrac{t}{2})\;\leq\;\exp\left(-c_0 N \min\{\frac{t^2}{4\sigma^4},\frac{t}{2\sigma^2}\}\right).    
\end{equation}
With the given choice of $\epsilon$, we can show that 
$$|\cN_{\s{v}}|\;\leq\;\left( 1 + \frac{2}{(3/2)^{1/2r} -1}\right)^{ 2 (\sum_{k=1}^rn_k) }\;\leq\;(10r)^{ 2 (\sum_{k=1}^rn_k) }.$$
Indeed, note that equivalently $(1 +\frac{2}{10r -1})^{2r} \le \frac{3}{2}$ for any $r \ge 1$. Consider the change of variable $x = \frac{10r -1}{2} \ge \frac{9}{2}$, it then suffices to show that the function $f(x) = (1+\frac{1}{x})^{\frac{2x+1}{5}} \le \frac{3}{2}$ for any $x\ge \frac{9}{2}$. Let $g(x) = 5\log\left(f(x)\right) = (2x+1) \log(1+\frac{1}{x})$. Note that $g'(x) = - \frac{2x+1}{x(x+1)} +2 \log(1+\frac{1}{x})$ and $g^{\prime\prime}(x) = \frac{1}{x^2(1+x)^2} \ge 0$. Therefore, $g^\prime(x)$ is an increasing function, leading to $g^\prime(x) \le \lim_{a\rightarrow +\infty} g^\prime(a) = 0$, and thus both $g(x)$ and $f(x)$ are decreasing functions. Numerical evaluation yields that $f(x)\le f(4.5)\approx 1.4938 < 1.5$. We also note that $f(x) > \lim_{x\rightarrow +\infty}f(x) = e^{0.4} \approx 1.4918$. 
Finally, using \eqref{eq:Tmlboundaux1}, \eqref{eq:aux2},  and the bound on $|\cN_{\s{v}}|$, we get  
$$
\P\left(\| \frac{1}{N}\sum_{i = 1}^N\eps_i \otimes \eps_i - \sigma^2 \I_{\s{v}}\|_{\rm ml} > t\right)\;\leq\;(10r)^{ 2 (\sum_{k=1}^rn_k) } \exp\left(-c_0 N \min\{\frac{t^2}{4\sigma^4},\frac{t}{2\sigma^2}\}\right)
$$
as claimed.
\end{proof}

\begin{lem}
\label{lemma: concentration_Y}
Suppose that $Z_1, \ldots, Z_N \in \U$ independently follow standard normal distribution and $Y_i = \bs A \cdot  Z_i$ with $A_k\in \R^{n_k\times m_k}$, $m_k \le n_k$ for $k = 1, \ldots, r$. We have 
$$ \P\left(\Big\| \frac{1}{N}\sum_{i = 1}^N Y_i \otimes Y_i - \sF_{\bs A}\sF_{\bs A}^*\Big\|_{\rm ml} > t\right)\;\le\;  \left(10r\right)^{2\sum_{k=1}^rm_k} \exp\left( - {c_0 N}\min\left\{\tfrac{t^2}{4\|\bs A\|^4},\tfrac{t}{2\|\bs A\|^2}\right\}\right),
$$
where $c_0=0.145$ and $\|\bs A\|:=\|A_1\|\cdots\|A_r\|$.
\end{lem}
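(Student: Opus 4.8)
The plan is to reduce the claim to the isotropic concentration bound of Lemma~\ref{lemma: isotropic_concentration} via a whitening identity. Set $\mathsf R:=\tfrac1N\sum_{i=1}^N Z_i\otimes Z_i-\I_{\s{u}}\in\cL(\U,\U)$. Since $Y_i=\sF_{\bs A}(Z_i)$ and, directly from the definition of $\otimes$ as a rank-one operator, $\sF_{\bs A}(a\otimes b)\sF_{\bs A}^*=(\sF_{\bs A}a)\otimes(\sF_{\bs A}b)$ for all $a,b\in\U$, we obtain the operator identity
\[
\tfrac1N\sum_{i=1}^N Y_i\otimes Y_i-\sF_{\bs A}\sF_{\bs A}^*\;=\;\sF_{\bs A}\,\mathsf R\,\sF_{\bs A}^*.
\]
It therefore suffices to (i) bound $\|\sF_{\bs A}\mathsf R\sF_{\bs A}^*\|_{\mathrm{ml}}$ deterministically by $\|\bs A\|^2\,\|\mathsf R\|_{\mathrm{ml}}$, and then (ii) apply Lemma~\ref{lemma: isotropic_concentration} to $\mathsf R$.

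For step (i), the key point is that $\sF_{\bs A}^*$ maps rank-one tensors to rank-one tensors: for $\bs u=u_1\otimes\cdots\otimes u_r\in\B_{\s{v}}$ one has $\sF_{\bs A}^*(\bs u)=(A_1^{\top}u_1)\otimes\cdots\otimes(A_r^{\top}u_r)$, whose $k$-th mode factor has norm at most $\|A_k^{\top}\|\,\|u_k\|=\|A_k\|$. Writing each mode factor of $\sF_{\bs A}^*\bs u$ and $\sF_{\bs A}^*\bs v$ as a nonnegative scalar (at most $\|A_k\|$) times a unit vector, so that $\sF_{\bs A}^*\bs u=c\,\check{\bs u}$ and $\sF_{\bs A}^*\bs v=c'\,\check{\bs v}$ with $c,c'\ge0$ and $\check{\bs u},\check{\bs v}\in\B_{\s{u}}$, gives for every $\bs u,\bs v\in\B_{\s{v}}$
\[
\langle\sF_{\bs A}\mathsf R\sF_{\bs A}^*\bs u,\bs v\rangle
=\langle\mathsf R(\sF_{\bs A}^*\bs u),\sF_{\bs A}^*\bs v\rangle
=cc'\,\langle\mathsf R\check{\bs u},\check{\bs v}\rangle
\le\Bigl(\prod_{k=1}^r\|A_k\|^2\Bigr)\|\mathsf R\|_{\mathrm{ml}}
=\|\bs A\|^2\|\mathsf R\|_{\mathrm{ml}},
\]
where we used that $cc'\le\|\bs A\|^2$ and that $\langle\mathsf R\check{\bs u},\check{\bs v}\rangle\le\|\mathsf R\|_{\mathrm{ml}}$ by \eqref{eq:spnorm} (if this last quantity is negative the bound is trivial since $cc'\ge0$). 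Maximizing over $\bs u,\bs v$ yields $\|\sF_{\bs A}\mathsf R\sF_{\bs A}^*\|_{\mathrm{ml}}\le\|\bs A\|^2\|\mathsf R\|_{\mathrm{ml}}$. Equivalently, one could re-run the covering argument of Lemma~\ref{lemma: isotropic_concentration} using $\epsilon$-nets of the balls of radius $\|A_k\|$ in $\R^{m_k}$; the whitening route just avoids repeating that computation and makes transparent why the covering exponent is $\sum_k m_k$ rather than $\sum_k n_k$.

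For step (ii), Lemma~\ref{lemma: isotropic_concentration} with $\V$ replaced by $\U$, $\sigma^2=1$, and the $Z_i$ in place of the $\eps_i$ gives $\P(\|\mathsf R\|_{\mathrm{ml}}>s)\le(10r)^{2\sum_k m_k}\exp(-c_0N\min\{s^2/4,\ s/2\})$. Combining with step (i) and substituting $s=t/\|\bs A\|^2$ reproduces the stated inequality; the degenerate case $\|\bs A\|=0$ is trivial since then $Y_i\equiv0$. I do not anticipate a genuine obstacle: the only delicate points are the bookkeeping in the operator identity (getting the adjoint and the $\otimes$-as-operator convention right) and the nonnegativity argument that lets $\|\mathsf R\|_{\mathrm{ml}}$ control the rescaled bilinear form — both routine.
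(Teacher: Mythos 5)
Your proof is correct and follows essentially the same route as the paper: both arguments whiten via the adjoint Tucker map, use that $\sF_{\bs A}^*$ maps $\B_{\s v}$ into $\|\bs A\|\cdot\B_{\s u}$ to get $\bigl\|\tfrac1N\sum_i Y_i\otimes Y_i-\sF_{\bs A}\sF_{\bs A}^*\bigr\|_{\mathrm{ml}}\le\|\bs A\|^2\bigl\|\tfrac1N\sum_i Z_i\otimes Z_i-\I_{\s u}\bigr\|_{\mathrm{ml}}$, and then invoke Lemma~\ref{lemma: isotropic_concentration} on $\U$ with $\sigma^2=1$. Your explicit operator identity $\sF_{\bs A}\mathsf R\sF_{\bs A}^*$ and the nonnegativity caveat are just a cleaner bookkeeping of the same steps the paper carries out termwise.
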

\begin{proof}
Taking  $\sM=\bs v\otimes\tilde{ \bs v}$, where $\bs v=v_1\otimes  \cdots\otimes v_r$, $\tilde{\bs v}=\tilde{v}_1\otimes\cdots\otimes \tilde{v}_r$, and both of them lie in $\B_{\s{v}}$. Recall that $Y_i=\sF_{\bs A}(Z_i)$, which leads to  
\begin{align*}
\<\sM Y_i,Y_i\>\;=\<\sM\sF_{\bs A}Z_i,\sF_{\bs A}Z_i\>\;=\;\<\sF_{\bs A}^*\sM\sF_{\bs A}Z_i,Z_i\> \;=\;\left\<(\sF_{\bs A}^* \bs v ) \otimes (\sF_{\bs A}^* \tilde{\bs v} ) Z_i,Z_i\right\>.
\end{align*}
Note that 
\begin{eqnarray*}
\sF^*_{\bs A}\bs v\;\in\;\sF^*_{\bs A}\B_{\s{v}}&=&\{(A_1^\top v_1)\otimes\cdots\otimes (A_r^\top v_r):\;\|v_1\|,\ldots,\|v_r\|\leq 1\} \\
& \subseteq &  \{u_1\otimes \cdots\otimes u_r:\;\;\|u_1\|\leq \|A_1\|,\ldots,\|u_r\|\leq \|A_r\|\}\\
& = &  \|A_1\|\cdots \|A_r\|  \cdot \{u_1\otimes \cdots\otimes u_r:\;\;\|u_1\|,\ldots,\|u_r\|\leq 1\}\\
&=& \|\bs A\| \cdot \B_{\s{u}},
\end{eqnarray*}
which implies that,
\begin{align*}
\max_{\bs v, \tilde{\bs v} \in \B_{\s{v}}} \left(\<\sM Y_i,Y_i\> - \langle M, F_{\bs A} F_{\bs A}^* \rangle \right) & = \max_{\bs v, \tilde{\bs v} \in \B_{\s{v}}} \left(\left\<(\sF_{\bs A}^* \bs v ) \otimes (\sF_{\bs A}^* \tilde{\bs v} ) Z_i,Z_i\right\> - \langle \sF_{\bs A}^* \bs v , \sF_{\bs A}^* \tilde{\bs v}   \rangle \right)\\
& \le  \|\bs A\|^2 \max_{\bs u, \tilde{\bs u} \in \B_{\s{u}}} \left(\<( \bs u \otimes  \tilde{\bs u} ) Z_i,Z_i\> - \langle \bs u, \tilde{\bs u} \rangle \right). 
\end{align*}
By Lemma~\ref{lemma: isotropic_concentration}, 
\begin{align*}
\P\left(\Big\| \frac{1}{N}\sum_{i = 1}^N Y_i \otimes Y_i - \sF_{\bs A}\sF_{\bs A}^*\Big\|_{\rm ml} > t\right) & \le  \P\left( \|\bs A\|^2\Big\| \frac{1}{N}\sum_{i = 1}^N Z_i \otimes Z_i - \I_{U}\Big\|_{\rm ml} > t\right)\\
& \le (10r)^{ 2 (\sum_{k=1}^rm_k) } \exp\left(-c_0 N \min\{\frac{t^2}{4\|\bs A\|^4},\frac{t}{2\|\bs A\|^2}\}\right).
\end{align*}
\end{proof}

\begin{lem}
\label{lemma: concentration_corss_term}
Under the setup of TPCA model, if we denote $Y_i = \bs A \cdot  Z_i$ as in Lemma~\ref{lemma: concentration_Y}, for $i = 1, \ldots, N$, we have
$$\P\left( \Big\|\tfrac{1}{N} \sum_{i = 1}^NY_i \otimes \eps_i\Big\|_{\rm ml}>t\right)\;\le\; \left(10r\right)^{\sum_{k=1}^r (m_k+n_k)}\exp\left( - \tfrac{N}{3} \min\left\{\tfrac{t^2}{4\sigma^2 \|\bs A\|^2},\tfrac{t}{2 \sigma \|\bs A\|}\right\}\right), 
$$
where $\|\bs A\|=\|A_1\|\cdots\|A_r\|$.
\end{lem}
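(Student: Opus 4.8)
The plan is to follow the template of Lemmas~\ref{lemma: isotropic_concentration} and~\ref{lemma: concentration_Y}: a whitening step that turns one of the two $\V$-directions into a $\U$-direction, a factorwise covering argument, and a scalar tail bound for each fixed net pair. Write $\sT := \tfrac1N\sum_{i=1}^N Y_i\otimes\eps_i$; since $Y_i$ and $\eps_i$ are independent and centered, $\E\sT=0$, so no centering is needed. For rank-one $\bs u,\bs v\in\B_{\s{v}}$,
\[
\langle\sT(\bs u),\bs v\rangle \;=\; \tfrac1N\sum_{i=1}^N\langle\eps_i,\bs u\rangle\,\langle Y_i,\bs v\rangle \;=\; \tfrac1N\sum_{i=1}^N\langle\eps_i,\bs u\rangle\,\langle Z_i,\sF_{\bs A}^*\bs v\rangle,
\]
using $\langle Y_i,\bs v\rangle=\langle\sF_{\bs A}Z_i,\bs v\rangle=\langle Z_i,\sF_{\bs A}^*\bs v\rangle$. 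As shown inside the proof of Lemma~\ref{lemma: concentration_Y}, $\sF_{\bs A}^*\B_{\s{v}}\subseteq\|\bs A\|\,\B_{\s{u}}$, so writing $\sF_{\bs A}^*\bs v=\|\bs A\|\bs b$ with $\bs b\in\B_{\s{u}}$ gives
\[
\|\sT\|_{\rm ml}\;\le\;\|\bs A\|\;M',\qquad M':=\max_{\bs u\in\B_{\s{v}},\,\bs b\in\B_{\s{u}}}\;\tfrac1N\sum_{i=1}^N\langle\eps_i,\bs u\rangle\,\langle Z_i,\bs b\rangle.
\]

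Next I would bound $M'$ by a covering argument identical in structure to the one in Lemma~\ref{lemma: isotropic_concentration}. Build $\epsilon$-nets $\cN_{\s{v}}$ of $\B_{\s{v}}$ and $\cN_{\s{u}}$ of $\B_{\s{u}}$ factorwise from $\epsilon$-nets of the $r$ unit balls in $\R^{n_k}$ and of the $r$ unit balls in $\R^{m_k}$. The quantity $\tfrac1N\sum_i\langle\eps_i,\bs u\rangle\langle Z_i,\bs b\rangle$ is multilinear of degree one in each of the $2r$ factor vectors $u_1,\dots,u_r,b_1,\dots,b_r$; expanding the factorwise perturbations at the maximizer of $M'$ into $2^{2r}$ terms and bounding every term that contains at least one perturbation by a product of norms times $M'$ (the signed rank-one tensors still lie in the unit balls) yields $M'\le\max_{\cN_{\s{v}}\times\cN_{\s{u}}}\tfrac1N\sum_i\langle\eps_i,\bs u\rangle\langle Z_i,\bs b\rangle+[(1+\epsilon)^{2r}-1]\,M'$. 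Taking $\epsilon=(3/2)^{1/2r}-1$ makes $(1+\epsilon)^{2r}-1=\tfrac12$, so $M'\le 2\max_{\cN_{\s{v}}\times\cN_{\s{u}}}\tfrac1N\sum_i\langle\eps_i,\bs u\rangle\langle Z_i,\bs b\rangle$, and the same elementary estimate as in Lemma~\ref{lemma: isotropic_concentration} gives $|\cN_{\s{v}}|\le(10r)^{\sum_k n_k}$ and $|\cN_{\s{u}}|\le(10r)^{\sum_k m_k}$. A union bound therefore reduces the problem to controlling $\P\!\bigl(\tfrac1N\sum_i W_i>\tfrac{t}{2\|\bs A\|}\bigr)$, with $W_i:=\langle\eps_i,\bs u\rangle\langle Z_i,\bs b\rangle$, over at most $(10r)^{\sum_k(n_k+m_k)}$ fixed pairs $(\bs u,\bs b)\in\cN_{\s{v}}\times\cN_{\s{u}}$.

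For a fixed pair the $W_i$ are i.i.d.\ products of two independent centered Gaussians, $\langle\eps_i,\bs u\rangle\sim N(0,\sigma^2\|\bs u\|_{\s{f}}^2)$ and $\langle Z_i,\bs b\rangle\sim N(0,\|\bs b\|_{\s{f}}^2)$, so $\E W_i=0$ and $v^2:=\E W_i^2=\sigma^2\|\bs u\|_{\s{f}}^2\|\bs b\|_{\s{f}}^2\le\sigma^2$. Conditioning on $\langle\eps_i,\bs u\rangle$ gives $\E e^{\theta W_i}=(1-\theta^2 v^2)^{-1/2}$ for $|\theta|v<1$, hence $\E e^{(\theta/N)\sum_i W_i}=(1-(\theta/N)^2 v^2)^{-N/2}$. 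A Chernoff bound, optimizing $\theta$ over $|\theta|<N/v$ (the near-optimal choices being $\theta/N=s/(2v^2)$ for $s\le v$ and $\theta/N=1/(2v)$ for $s>v$), yields the one-sided sub-exponential tail $\P\!\bigl(\tfrac1N\sum_i W_i>s\bigr)\le\exp\!\bigl(-\tfrac N3\min\{s^2/v^2,\,s/v\}\bigr)\le\exp\!\bigl(-\tfrac N3\min\{s^2/\sigma^2,\,s/\sigma\}\bigr)$. Setting $s=t/(2\|\bs A\|)$ turns this into $\exp\!\bigl(-\tfrac N3\min\{\tfrac{t^2}{4\sigma^2\|\bs A\|^2},\,\tfrac{t}{2\sigma\|\bs A\|}\}\bigr)$, and multiplying by the net cardinality $(10r)^{\sum_k(n_k+m_k)}$ gives the stated bound.

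The main obstacle is not a single deep step but the interplay between whitening and covering: one must arrange the net so that the $\bs v$-direction is absorbed into $\B_{\s{u}}$ — this is exactly what replaces $\sum_k n_k$ by $\sum_k m_k$ in the exponent, as in Lemma~\ref{lemma: concentration_Y} — while keeping the multilinear perturbation expansion over all $2r$ factors clean enough to close with constant $\tfrac12$, and then tracking the constants in the Chernoff step so that the exponent is exactly $\tfrac N3\min\{t^2/(4\sigma^2\|\bs A\|^2),\,t/(2\sigma\|\bs A\|)\}$; this last part is elementary but constant-sensitive bookkeeping. One could instead recognize $\tfrac1N\sum_i W_i$ as a Gaussian chaos on the stacked vector $(\vec(Z_1),\dots,\vec(Z_N),\tfrac1\sigma\vec(\eps_1),\dots,\tfrac1\sigma\vec(\eps_N))$ and invoke Lemma~\ref{lem: hanson-wright}, but the constant this produces is worse than the one coming from the direct product-of-Gaussians moment generating function above.
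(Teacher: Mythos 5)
Your proposal is correct and follows essentially the same route as the paper's proof: reduce $\|\sT\|_{\rm ml}$ via the whitening $\sF_{\bs A}^*\B_{\s v}\subseteq\|\bs A\|\,\B_{\s u}$ to a bilinear form in $Z_i$ and $\eps_i$, cover $\B_{\s v}\times\B_{\s u}$ factorwise with $\epsilon=(3/2)^{1/2r}-1$, and control each fixed net pair through the exact moment generating function $(1-\theta^2v^2)^{-1/2}$ of a product of independent Gaussians. The only cosmetic difference is the last step, where the paper bounds that MGF by $e^{\ln(2)\sigma^2\lambda^2}$ and invokes the standard sub-exponential tail bound, while you optimize the Chernoff bound directly; both land on the same constant $N/3$.
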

\begin{proof}
Denote $\sT_i=Y_i\otimes \eps_i$, $\sT=\tfrac1N\sum_{i=1}^N \sT_i$, and let $ \bs v, \tilde{\bs v}\in \B_{\s{v}}$ as defined in \eqref{eq:Bv}. For every $i=1,\ldots,N$, we have
$$
\<\sT_i(\bs v), \tilde{\bs v}\>\;=\;\<Y_i,\tilde{\bs v}\>\<\eps_i,\bs v\>\;=\;\<Z_i,\sF^*_{\bs A} \tilde{\bs v}\>\<\eps_i,\bs v\>.
$$
Note that as shown in the proof of Lemma~\ref{lemma: concentration_Y}, $\sF^*_{\bs A} \tilde{\bs v} \in \|\bs A\|\cdot \B_{\s{u}}$, which implies that, 
\begin{equation}\label{eq:TleqS}
\max_{\bs v,\tilde{\bs v} \in \B_{\s{v}}} \<\sT(\bs v), \tilde{\bs v}\>\;\leq\;\|\bs A\| \max_{\bs u \in \B_{\s{u}}, \bs v \in \B_{\s{v}}} \<\sS(\bs v),\bs u\>,  
\end{equation}
where $\sS=\tfrac1N\sum_{i=1}^N Z_i\otimes \eps_i$. 
Let $\cN_{\s{u}}, \cN_{\s{v}}$ be $\epsilon$-covers of $\B_{\s{u}}, \B_{\s{v}}$ as defined in \eqref{eq:Ncover}. Taking $\epsilon = (\frac{3}{2})^{\frac{1}{2r}} - 1$ and using the standard covering argument as in the previous proofs, we get
\begin{eqnarray*}
\P(\|\sT\|_{\rm ml}\geq t)&=& \P(\max_{\bs v,\tilde{\bs v}\in \B_{\s{v}}} \<\sT(\bs v), \tilde{\bs v}\>\geq t)\;\overset{\eqref{eq:TleqS}}{\leq}\;\P(\max_{\bs u \in \B_{\s{u}},\bs v\in \B_{\s{v}}} \<\sS(\bs v), \bs u\>\geq \tfrac{t}{\|\bs A\|})  \\
&\leq & \P(\max_{\bs u\in \cN_{\s{u}}, \bs v\in \cN_{\s{v}}} \<\sS(\bs v), \bs u\>\geq \tfrac{t}{2\|\bs A\|})\;\leq\;\sum_{\bs u\in \cN_{\s{u}}}\sum_{\bs v\in \cN_{\s{v}}}\P( \<\sS(\bs v),\bs u\>\geq \tfrac{t}{2\|\bs A\|}).
\end{eqnarray*}
Now for a fixed $\bs u\in \cN_{\s{u}}$ and $\bs v\in \cN_{\s{v}}$ consider the variables $U_i:=\<Z_i,\bs u\>$ and $V_i:=\<\eps_i,\bs v\>$. Note that $U_i,V_i$ are mean zero independent Gaussian variables with variances $1$ and $\sigma^2$, respectively. Consider the following moment generating function, for every $\lambda$,
   $$
   \E({e^{\lambda U_iV_i}})\;=\;\E \left(\E({e^{\lambda U_iV_i}}|U_i)\right)\;=\;\E(e^{\tfrac12 \lambda^2\sigma^2 U_i^2}),
   $$
   where $U_i^2$ is a chi-squared random variable with degree of freedom $1$. The known formula for the moment generating function of $U_i^2$ gives that 
   $$
   \E({e^{\lambda U_iV_i}})\;= \;\frac{1}{\sqrt{1-\sigma^2\lambda^2}},\qquad\mbox{for }|\lambda|<\tfrac1\sigma.
   $$
   We can further bound
   $$
  \frac{1}{\sqrt{1-\sigma^2\lambda^2}}\;\leq\;e^{\ln(2)\sigma^2\lambda^2}, \qquad\mbox{for }|\lambda|<\tfrac{1}{\sqrt{2}\sigma}.
   $$
   meaning that $U_iV_i$ is sub-exponential with parameters $( \sqrt{2\ln(2)}\sigma,\sqrt{2}\sigma)$ or, in particular, with parameters $( \sqrt{\tfrac32}\sigma,\sqrt{2}\sigma)$; the latter is taken for simpler algebra. Note that $\<\sS(\bs v),\bs u\>=\tfrac1N\sum_{i=1}^N U_i V_i$. By the sub-exponential tail bound (Proposition~2.9 in \cite{Wainwright_2019}) and Remark~\ref{rem:HansonWrightN},
   $$
   \P(\tfrac1N\sum_{i=1}^N U_i V_i>\tfrac{t}{2\|\bs A\|})\;\leq\;\exp(-{N}\min\{\tfrac{t^2}{12\sigma^2\|\bs A\|^2},\tfrac{t}{4\sqrt{2}{\sigma}\|\bs A\|}\})\;\leq\;\exp(-\tfrac{N}{3}\min\{\tfrac{t^2}{4\sigma^2\|\bs A\|^2},\tfrac{t}{2{\sigma}\|\bs A\|}\}),
   $$
   where we used the fact that $\tfrac43\sqrt{2}<2$.
   It then follows that
   $$
   \P(\|\sT\|_{\rm ml}\geq t)\;\leq\;\left(10r\right)^{\sum_{k=1}^r (m_k+n_k)}\exp\left( - \tfrac{N}{3}\min\left\{\tfrac{t^2}{4\sigma^2 \|\bs A\|^2 }, \tfrac{t}{2\sigma \|\bs A\| }\right\}\right),
   $$
   as claimed.
\end{proof}

\subsection*{Proof of Theorem~\ref{theorem:concentration_spectral}}
\begin{proof} For any realization $X_i$ of $X$, let $Z_i,\eps_i$ be the corresponding realizations of $Z$ and $\eps$.
Denote $Y_i = \bs A \cdot  Z_i$, for $i = 1,\ldots, N$. Recall the definition of the spectral norm in \eqref{eq:spnorm}. Using stochastic representation of $X$ in \eqref{eq:XPPCA}, the form of $\Sigma$ in \eqref{eq:Sigma}, and the triangle inequality, we get 
$$
\|\tfrac{1}{N} \sum_{i = 1}^N X_i \otimes X_i - \Sigma\|_{\rm ml} \;\le\;   \|\tfrac{1}{N} \sum_{i = 1}^N \eps_i \otimes \eps_i - \sigma^2\I_{\s{v}}\|_{\rm ml}+\|\tfrac{1}{N} \sum_{i = 1}^N Y_i \otimes Y_i - \sF_{\bs A}\sF_{\bs A^*}\|_{\rm ml}  + 2\|\tfrac{1}{N} \sum_{i = 1}^N Y_i \otimes \eps_i \|_{\rm ml}.
$$
It then suffices to employ Lemmas~\ref{lemma: isotropic_concentration}, \ref{lemma: concentration_Y} and \ref{lemma: concentration_corss_term} to bound the three terms on the right hand side of the above display separately.

In the bound in Lemma~\ref{lemma: isotropic_concentration} take $t=t_1$ and  denote the right-hand side expression of the bound by $\delta_1$. Solving for $t_1$, we easily arrive at expression
$$
\min\{\tfrac{t^2_1}{4\sigma^4},\tfrac{t_1}{2\sigma^2}\}\;=\;\frac{1}{c_0 N}\left(2\log(10r)\sum_{k=1}^r n_k+\log\tfrac{1}{\delta_1}\right)\;:=\;\alpha_1.
$$
Note that for any $\alpha,\beta>0$ it holds that $\alpha=\min\{\beta,\beta^2\}$ if and only if $\beta=\max\{\alpha,\sqrt{\alpha}\}$. We can now take $\beta=\tfrac{t_1}{2\sigma^2}$ and $\alpha=\alpha_1$ to solve for $t_1$ in terms of $\delta_1$ and other parameters. We conclude that 
$$
\|\tfrac1N\sum_{i=1}^N \eps_i\otimes \eps_i-\sigma^2\I_{\s{v}}\|_{\rm ml}\;\leq\;2\sigma^2 \max\{\alpha_1,\sqrt{\alpha_1}\},\quad\mbox{with probability at least }1-\delta_1.
$$
Note that if we take $\delta_1=N^{-2\log(10r)\sum_k n_k}$, 
the expression for $\alpha_1$ yields
$$
\frac{1}{c_0 N}\left(2\log(10r)\sum_{k=1}^r n_k+\log\tfrac{1}{\delta_1}\right)\;=\;\frac{2\log(10r)\sum_{k=1}^r n_k}{c_0 N}\left(1+\log(N)\right),
$$
which converges to zero when $N \rightarrow \infty$ and $n_{\max}$ satisfies $\tfrac{n_{\max}\log N}{N}\to 0$.

We proceed in exactly the same way for the other two terms. In the bound in Lemma~\ref{lemma: concentration_Y} take $t=t_2$ and  denote the right-hand side expression of the bound by $\delta_2$. Solving for $t_2$, we arrive at the expression
$$
\min\{\tfrac{t^2_2}{4\|\bs A\|^4},\tfrac{t_2}{2\|\bs A\|^2}\}\;=\;\frac{1}{c_0 N}\left(2\log(10r)\sum_{k=1}^r m_k+\log\tfrac{1}{\delta_2}\right)\;:=\;\alpha_2.
$$
We can now take $\beta=\tfrac{t}{2\|\bs A\|^2}$ and $\alpha=\alpha_2$ to solve for $t_2$ as above. We conclude that 
$$
\|\tfrac1N\sum_{i=1}^N Y_i\otimes Y_i-\sF_{\bs A}\sF_{\bs A}^*\|_{\rm ml}\;\leq\;2\|\bs A\|^2 \max\{\alpha_2,\sqrt{\alpha_2}\}\quad\mbox{with probability at least }1-\delta_2.
$$
Note that if we take $\delta_2=N^{-2\log(10r)\sum_k m_k}$ the expression for $\alpha_2$ yields
$$
\frac{1}{c_0 N}\left(2\log(10r)\sum_{k=1}^r m_k+\log\tfrac{1}{\delta_2}\right)\;=\;\frac{2\log(10r)\sum_{k=1}^r m_k}{c_0 N}\left(1+\log(N)\right),
$$
which converges to zero when $N\to \infty$ and $m_{\max}$ satisfies $\tfrac{ m_{\max}\log N}{N}\to 0$.

Finally, denote the probability bound in Lemma~\ref{lemma: concentration_corss_term} by $\delta_3$ and take $t = \frac{1}{2}t_3$. Solving for $t_3$ we arrive at expression
$$
\min\{\tfrac{t^2_3}{16\sigma^2\|\bs A\|^2},\tfrac{t_3}{4\sigma\|\bs A\|}\}\;=\;\frac{3}{ N}\left(\log(10r)\sum_{k=1}^r (m_k+n_k)+\log\tfrac{1}{\delta_3}\right)\;:=\;\alpha_3.
$$
We can now take $\beta=\tfrac{t_3}{4\sigma\|\bs A\|}$ and $\alpha=\alpha_3$ to solve for $t_3$ as above. We conclude that 
$$
2\|\tfrac1N\sum_{i=1}^N Y_i\otimes \eps_i\|_{\rm ml}\;\leq\;4\sigma\|\bs A\| \max\{\alpha_3,\sqrt{\alpha_3}\}\quad\mbox{with probability at least }1-\delta_3.
$$
Note that if we take $\delta_3=N^{-\log(10r)\sum_k (m_k+n_k)}$, the expression for $\alpha_3$ yields
$$
\frac{3}{N}\left(\log(10r)\sum_{k=1}^r (m_k+n_k)+\log\tfrac{1}{\delta_3}\right)\;=\;\frac{3\log(10r)\sum_{k=1}^r (m_k+n_k)}{N}\left(1+\log(N)\right),
$$
which converges to zero when $N\to \infty$ and $\tfrac{n_{\max}\log N}{N}\to 0$.

Finally, putting the pieces together, with probability at least $1- \delta_1-\delta_2-\delta_3$, we have $\|\frac{1}{N} \sum_{i = 1}^N X_i \otimes X_i - \Sigma\|_{\rm ml} \le t_1 + t_2 +t_3$. 
\end{proof}

\noindent \textbf{Proof of Theorem~\ref{theorem:rank-1poweriteration}} 
\begin{proof}
Consider the decomposition of $\sS_N$ given in \eqref{eq:S_Ndecomp}, $\pair(\sS_N)=\omega\;b_1\otimes \cdots \otimes b_r+\pair(\Phi)$, where $\Phi=\sigma^2\I_{\s{v}}+\sS_N-\Sigma$ and $\omega=\prod_{k=1}^r \|B_k\|_{\s{f}}$. 

To proof the theorem, without loss of generality, we analyze $\sin\theta_k^{(1)}$ for $k=r$. The analysis for general $k$ is analogous but with more tedious notation. Let $\tilde{b}_r$ be the result given by the first step power iteration as given in \eqref{equ: b_k_l_tilde}:
\begin{equation}\label{eq:b1tilde}
\begin{aligned}
\tilde{b}_r &= ({b_1^{(0)}}^\top,\ldots, {b_{r-1}^{(0)}}^\top, I_{n_r^2}) \cdot  \left(\omega\;b_1\otimes \cdots\otimes b_r+\pair(\Phi)\right)\\
&= \omega \prod_{k=1}^{r-1}\cos(\theta^{(0)}_k)b_r + ({b_1^{(0)}}^\top,\ldots, {b_{r-1}^{(0)}}^\top, I_{n_r^2}) \cdot  \pair(\Phi).
\end{aligned}    
\end{equation}
Denote by $\tilde B_r=\vec^{-1}(\tilde{b}_r)$ the associated matrix and let $$\tilde\Delta=\vec^{-1}\left(({b_1^{(0)}}^\top,\ldots, {b_{r-1}^{(0)}}^\top, I_{n_r^2}) \cdot  \pair(\Phi)\right)\in \R^{n_r\times n_r}.$$
We first upper bound the spectral norm of the matrix $\tilde\Delta$. Since $\tilde\Delta$ is positive semi-definite by construction, we can rewrite its spectral norm as
$$
\|\tilde \Delta\|\;=\;\max_{\|u_r\|\leq 1} \<\tilde\Delta,u_r u_r^\top\>.
$$
Let $\sum_{j =1}^{m_k} \lambda_{k, j}^{(0)} u_{k, j}^{(0)} (u_{k, j}^{(0)})^\top$ be the eigenvalue decomposition of $B_k^{(0)}$, for $k = 1, \ldots, r-1$. This gives
$$
b_k^{(0)}=\sum_{j=1}^{m_k}\lambda_{k,j}^{(0)}\vec(u_{k, j}^{(0)} (u_{k, j}^{(0)})^\top).
$$
By multilinearity, we can rewrite $\<\tilde\Delta,u_r u_r^\top\>$ as
$$
\sum_{j_1=1}^{m_1}\cdots \sum_{j_{r-1}=1}^{m_{r-1}} \lambda^{(0)}_{1,j_1}\cdots \lambda^{(0)}_{r-1,j_{r-1}}\left(\vec(u_{1,j_1}^{(0)}(u_{1,j_1}^{(0)})^\top,\ldots,\vec(u_{r}u_{r}^\top)\right)\cdot \pair(\Phi).
$$
\begin{rem}
Consider any $\sT\in \mathcal L(\V,\V)$ and $\bs u,\bs v\in \B_{\s{v}}$ as defined in \eqref{eq:Bv}. Then
\begin{equation}\label{eq:Tuvusefulbound}
\<\sT(\bs u),\bs v\>\;=\;\left(\vec(u_1v_1^\top)^\top,\ldots,\vec(u_rv_r^\top)^\top\right)\cdot \pair(\sT)\;\leq\;\|\sT\|_{\rm ml}.    
\end{equation}    
\end{rem}
Using this remark and the fact that $\|\Phi\|_{\rm ml}\leq \psi$ (as $\bs E(\psi)$ holds), we get that, under our assumed event, 
$$
\left(\vec(u_{1,j_1}^{(0)}(u_{1,j_1}^{(0)})^\top,\ldots,\vec(u_{r}u_{r}^\top)\right)\cdot \pair(\Phi)\;\leq\;\|\Phi\|_{\rm ml}\;\leq\;\psi.
$$
Moreover, all involved eigenvalues are non-negative and so
$$
\|\tilde\Delta\|\;\leq\;\psi\sum_{j_1=1}^{m_1}\cdots  \sum_{j_{r-1}=1}^{m_{r-1}} \lambda^{(0)}_{1,j_1}\cdots \lambda^{(0)}_{r-1,j_{r-1}} \;=\;\psi \prod_{k=1}^{r-1}\tr(B_k^{(0)})\;\leq\;\psi \prod_{k=1}^{r-1} \sqrt{m_k}\;=\;\psi\sqrt{\tfrac{m}{m_r}},
$$
where $m=m_1\cdots m_r$ and the last inequality follows by the fact that each $B_k^{(0)}$ has rank at most $m_k$ with $\|B_k^{(0)}\|_{\s{f}}=1$ and inequality \eqref{eq:lowrankFbound}.

Next, we study the eigendecomposition step of $\tilde{B}_r\in \R^{n_r \times n_r}$. Denote $\widehat{B}={\rm Eig}_{m_k}(\tilde B_{r})$ so that $B_r^{(1)}$ defined in \eqref{equ: b_k_l_hat} is just the normalized version of $\widehat{B}$. Denote 
$$c\;:=\;\tfrac{\omega}{\|B_r\|_{\s{f}}} \prod_{k=1}^{r-1}\cos(\theta^{(0)}_k).$$
The decomposition of $\tilde b_r$ in \eqref{eq:b1tilde} gives $\tilde B^{(1)}_r\;=\;c B_r+\tilde\Delta$. 
Let  $\hat\Delta$ be such that 
$$
\widehat{B}\;=\;c B_r+\hat\Delta.
$$ 
To bound the norm of $\hat\Delta$ note that, by the triangle inequality,
$$
 \|\hat\Delta \|\;\le\; \|\widehat{B} - \tilde B^{(1)}_r\| + \|\tilde B^{(1)}_r- cB_r \| \;=\;\|\widehat{B} - \tilde B^{(1)}_r\|+\|\tilde\Delta\|.
 $$
By Eckhart-Young Theorem (Theorem~2.4.8 in \cite{golub2013matrix}), $\widehat{B}$ is the minimizer of the spectral distance to $\tilde B^{(1)}$ among all matrices of rank at most $m_r$. Thus,
$$
\|\widehat{B} - \tilde B^{(1)}_r\|\;\leq\;\|cB_r-\tilde B^{(1)}_r\|\;=\;\|\tilde \Delta\|.
$$
It follows that 
$$
\|\hat\Delta\|\;\leq\;2\|\tilde\Delta\|\;\leq\;2\psi\sqrt{\tfrac{m}{m_r}}.
$$
Since $\widehat{B}$ and $B_r$ both have ranks at most $m_r$, it follows that ${\rm rank}(\hat\Delta)\leq 2m_r$ and so, by \eqref{eq:lowrankFbound},
\begin{equation}\label{eq:hatDeltaF}
\|\hat{\Delta}\|_{\s{f}}\;\leq\;\sqrt{{\rm rank}(\hat{\Delta})}\|\hat{\Delta}\| \;\leq \; \sqrt{2m_r} \cdot 2 \sqrt{\frac{m}{m_r}} \psi \;=\; 2\sqrt{2m} \psi.
\end{equation}
We are now ready to bound $\sin \theta_r^{(1)}$. By definition, 
\begin{eqnarray*}
    \sin \theta^{(1)}_r&=& \sqrt{1-\frac{\langle \widehat{B}, B_r\rangle^2}{\|\widehat{B}\|_{\s{f}}^2\|B_r\|_{\s{f}}^2}}\;=\;\frac{\sqrt{\|\widehat{B}\|_{\s{f}}^2\|B_r\|_{\s{f}}^2-\langle \widehat{B}, B_r\rangle^2}}{\|\widehat{B}\|_{\s{f}}\|B_r\|_{\s{f}}}.
\end{eqnarray*}
Note that
$$
\|\widehat{B}\|^2_{\s{f}}\|B_r\|^2_{\s{f}}\;=\;c^2\|B_r\|_{\s{f}}^4+2c\|B_r\|^2_{\s{f}}\<\hat\Delta,B_r\>+\|B_r\|^2_{\s{f}}\|\hat\Delta\|_{\s{f}}^2
$$
and
$$
\<\widehat{B},B_r\>^2\;=\;c^2\|B_r\|^4_{\s{f}}+2c\|B_r\|_{\s{f}}^2\<\hat \Delta,B_r\>+\<\hat\Delta,B_r\>^2.
$$
It follows that 
$$
\sin \theta^{(1)}_r\;=\;\frac{\sqrt{\|B_r\|^2_{\s{f}}\|\hat\Delta\|_{\s{f}}^2-\<\hat\Delta,B_r\>^2}}{\|\widehat{B}\|_{\s{f}}\|B_r\|_{\s{f}}}\;\leq\;\frac{\|\hat\Delta\|_{\s{f}}}{\|\widehat{B}\|_{\s{f}}}\;\leq\;\frac{\|\hat\Delta\|_{\s{f}}}{c\|B_r\|-\|\hat{\Delta}\|_{\s{f}}},
$$
where in the last step we use the triangle inequality: 
$$
\|\widehat{B}\|_{\s{f}}\;\geq \;c\|B_r\|_{\s{f}}-\|\hat{\Delta}\|_{\s{f}}.
$$
Using \eqref{eq:hatDeltaF} and the fact that the function $\tfrac{x}{a-x}$ for $a>0$ is increasing in $x$, we get
$$
\sin \theta^{(1)}_r\;\leq\;\frac{2\sqrt{2m}\psi}{c\|B_r\|_{\s{f}}-2\sqrt{2m}\psi}
$$
By Assumption~\ref{assp: tau}, $\sin \theta^{(0)}_k\leq \tau$ for all $k=1,\ldots,r$, or equivalently, $\cos \theta^{(0)}_k\geq \sqrt{1-\tau^2}$. This gives
$$
c\|B_r\|_{\s{f}}\;=\;\omega \prod_{k=1}^{r-1}\cos(\theta^{(0)}_k)\;\geq\; \omega(1-\tau^2)^{\frac{r-1}{2}}.
$$
Consequently, 
$$
\frac{2\sqrt{2m}\psi}{c\|B_r\|_{\s{f}}-2\sqrt{2m}\psi}\;\leq\;\frac{2\sqrt{2m}\psi}{\omega(1-\tau^2)^{\frac{r-1}{2}}-2\sqrt{2m}\psi}\;\leq\;\frac{4\sqrt{2m}\psi}{\omega(1-\tau^2)^{\frac{r-1}{2}}},
$$
which concludes the first part of the proof.

Finally, we turn to bound $|\omega - \widehat{\omega}^{(1)}|$, where $\widehat{\omega}^{(1)}$ is defined in \eqref{equ:omega_hat} (take $L=1$). By definition, 
\begin{align*}
\widehat{\omega}^{(1)} \;=\; \left(\widehat{b}_1^{\top}, \ldots, \widehat{b}_r^{ \top} \right)\cdot \pair(\sS_N)\;=\; \omega \prod_{k =1}^r \cos \theta_k^{(1)} + \left(\widehat{b}_1^{ \top}, \ldots, \widehat{b}_r^{ \top} \right)\cdot\pair(\Phi). 
\end{align*}
By the similar argument as bounding the spectral norm of $\tilde{\Delta}$, we obtain
\begin{align*}
\left(\frac{\widehat{b}_1^{ \top}}{\|\widehat{b}_1\|}, \ldots, \frac{\widehat{b}_r^{ \top}}{\|\widehat{b}_r\|}, \right)\cdot\pair(\Phi) \le \sqrt{m} \psi, 
\end{align*}
and so
\begin{align*}
|\omega - \widehat{\omega}^{(1)}|&\le \omega\left(1 -  \prod_{k =1}^r \sqrt{ 1- \frac{32m\psi^2}{ (1 -\tau^2)^{r-1}  \omega^2}} \right) + \sqrt{m}\psi\\
& \le \omega \cdot \frac{r}{2} \cdot \frac{32m\psi^2}{ (1-\tau^2)^{r-1}\omega^2} + \sqrt{m}\psi, 
\end{align*}
where the last inequality uses the facts that $r\ge 2$ and $(1 - x)^{\alpha} \ge 1 - \alpha x$ for $0\le x\le 1$ and $\alpha \ge 1$; here applied with $\alpha=\tfrac{r}{2}$ and $x=\tfrac{32m\psi^2}{(1-\tau^2)^{r-1}\omega^2}$. This gives that 
\begin{align*}
\frac{|\omega - \widehat{\omega}^{(1)}|}{\omega}\;\le\; \frac{16mr\psi^2}{ (1-\tau^2)^{r-1}\omega^2} + \frac{\sqrt{m}\psi}{\omega}  
\end{align*}
as claimed.
\end{proof}

\noindent \textbf{Proof of Corollary~\ref{cor: cor_A_consistency}}
\begin{proof}
Recall that $B_k^{(1)}$ is the output after the first iteration of our algorithm defined in \eqref{equ: b_k_l_hat}. By Theorem~\ref{theorem:rank-1poweriteration},  we have 
\begin{align}\label{eq:sqrt2bound}
\left\| B_k^{(1)} - \frac{B_k}{\|B_k\|_{\s{f}}}\right\|_{\s{f}}\;=\;\|b_k^{(1)}-b_k\|  \;\le\;  \sqrt{2}f_1(\tau),
\end{align}
and $ |\widehat{\omega} - \omega| \le f_2(\tau) \omega$, for any $k = 1, \ldots, r$.  Note that for any $x\ge 0, y>0$ and $r \in N^{+}$, we have 
\begin{align*}
|x - y| = \frac{|x^r - y^r|}{\sum_{i = 0}^{r-1} x^i y^{r-1+i}} \le \frac{|x^r - y^r|}{ y^{r-1}}, 
\end{align*}
where we denote $0^0 = 1$ as convention. Now taking $x = \hat{\omega}^{\frac{1}{r}}> 0$ and $y = \omega^{\frac{1}{r}} > 0$ in the above display yields that

\begin{align*}
|\widehat{\omega}^{\frac{1}{r}} - \omega^{\frac{1}{r}}| \le |\widehat{\omega} - \omega| \omega^{-\frac{r-1}{r}}  \le f_2(\tau) \omega^{\frac{1}{r}},
\end{align*}
 Moreover, by Assumption~\ref{assp:scale}, we have $\|B_1\|_{\s{f}} =\ldots = \|B_r\|_{\s{f}} = \omega^{1/r}$, and so after a single iteration ($L=1$), our estimate of $B_k$ would be $\widehat B_k:=\widehat{\omega}^{\frac1r}B_k^{(1)}$ (based on \eqref{equ:A_hat})
$$
\|\widehat{B}_k-B_k\|_{\s{f}}\;\leq\;\|\widehat{\omega}^{\frac1r}B_k^{(1)}-\omega^{\frac1r}B_k^{(1)}\|_{\s{f}} +\|\omega^{\frac1r}B_k^{(1)}-B_k\|_{\s{f}}\;=\;|\widehat\omega^{\frac1r}-\omega^{\frac1r}|+\omega^{\frac1r}\|b_k^{(1)}-b_k\|,
$$
which, by \eqref{eq:sqrt2bound} and the above bound on $|\widehat\omega^{\frac1r}-\omega^{\frac1r}|$, can be bounded by 
$$
\left( f_2(\tau)+\sqrt{2}f_1(\tau)\right)\omega^{\frac1r}.
$$
By Lemma~6 of \cite{ge2017no}, there exists an orthogonal matrix $O \in \R^{m_k \times m_k}$, such that 
\begin{align*}
\frac{1}{\sqrt{n_k m_k}}\|\widehat{A}_k - A_k O\|_{\s{f}} \;\;\le\;\; \frac{ \left[ f_2(\tau) +  \sqrt{2}f_1(\tau)\right]\omega^{\frac{1}{r}}}{2(\sqrt{2} - 1) \sqrt{n_k m_k}\lambda_{m_k}(B_k)}, 
\end{align*}
for any $k = 1, \ldots, r$.
\end{proof}

\noindent\textbf{Proof of Theorem~\ref{theorem: sigma_2_consistency}}
\begin{proof}
We first control $\frac{1}{n}|\tr(J_1)|$. In the proof of Corollary~\ref{cor: cor_A_consistency}, assuming $\bs E(\psi)$ holds,  we have $\|\widehat{B}_k - B_k\|_{\s{f}} \le f_3(\tau) \omega^{\frac{1}{r}}/\sqrt{2}$, where $f_3(\tau) = 2f_1(\tau) + \sqrt{2}f_2(\tau)$. Let $\Delta_k = \widehat{B}_k - B_k$. It then follows that $\Delta_k$ is symmetric and has rank at most $2m_k$.  Moreover, by Cauchy-Schwarz inequality
\begin{align*}
|\tr(\Delta_k)| \le \sum_{j = 1}^{2m_k} \sigma_j(\Delta_k) \le \sqrt{2m_k} \sqrt{\sum_{j = 1}^{2m_k} \left(\sigma_j(\Delta_k)\right)^2 }= \sqrt{2m_k} \|\Delta
_k\|_{\s{f}} \le \sqrt{m_k}f_3(\tau) \omega^{\frac{1}{r}}, 
\end{align*}
where $\sigma_j(\Delta_k)$ is the $j$-th leading singular value of $\Delta_k$, for $k = 1, \ldots, r$. Therefore, 
\begin{align*}
\frac{1}{n}|\tr(J_1)|&=\frac{1}{n}\left| \tr\left(  \widehat{B}_r \otK\ldots \otK \widehat{B}_1 - B_r\otK \ldots \otK B_1\right)\right|\\
& =\frac{1}{n}\left| \prod_{k =1}^r \tr(\widehat{B}_k) - \prod_{k = 1}^r \tr(B_k) \right|\\
& =\frac{1}{n}\left| \prod_{k =1}^r \tr(B_k+ \Delta_k) - \prod_{k = 1}^r \tr(B_k) \right|\\
& \le \frac{\sqrt{m}}{n} \left[ \prod_{k =1}^r \left(\|B_k\|_{\s{f}} +f_3(\tau) \omega^{\frac{1}{r}}\right) - \prod_{k = 1}^r \|B_k\|_{\s{f}} \right]\\
& = \frac{\sqrt{m}\omega}{n} \left[ \left(1 + f_3(\tau) \right)^r - 1 \right]. 
\end{align*}

Next, we apply the Hanson-Wright inequality to control $\frac{1}{n} |\tr(J_2)|$. Note that $\vec(X_i)$ shares the same distribution of $\left[\mat(\Sigma)\right]^{^{\frac{1}{2}}}\xi_i$, where $\xi_1, \ldots, \xi_N \overset{i.i.d}{\sim} N_n(0, I_n)$. Therefore, 
\begin{align*}
& \quad \P\left(\frac{1}{n} \left \langle \mat\left(\sS_N\right) - \mat(\Sigma) , I_n\right \rangle > t\right)\\
& = \P\left( \left \langle \frac{1}{N}\sum_{i = 1}^N \xi_i \xi_i^\top - I_n, \mat(\Sigma)\right \rangle > nt\right)\\
& = \P\left( (\xi_1^\top, \ldots, \xi_N^\top) I_N \otimes_K \mat(\Sigma)(\xi_1^\top, \ldots, \xi_N^\top)^\top -  \tr\left(I_N \otK \mat(\Sigma)\right) > nNt\right)\\
&\le \exp\left(- c_0 \min \left\{ \frac{n^2N^2t^2}{\|I_N\otK \mat(\Sigma)\|_{\s{f}}^2}, \frac{nNt}{\|I_N\otK \mat(\Sigma)\|}\right\}\right). 
\end{align*}
where $c_0 = 0.145$ as before. Moreover, $\|I_N\otK \mat(\Sigma)\|_{\s{f}}^2 = N \|\Sigma\|_{\s{f}}^2 \le N [m \|\Sigma\|^2 + (n-m)\sigma^4]$, and $\|I_N\otK \mat(\Sigma)\| = \|\Sigma\| = \|A_1\|^2\ldots \|A_r\|^2 + \sigma^2$, leading to 
\begin{align*}
& \quad \P\left(\frac{1}{n} \left \langle \mat\left(\sS_N  \right) - \mat(\Sigma) , I_n\right \rangle > t\right)\\
&\le \exp\left(- \frac{c_0 nNt}{\| \Sigma\|} \min \left\{ \frac{nt}{m\|\Sigma\| + (n-m)\sigma^2/ \|\Sigma\|}, 1\right\}\right). 
\end{align*}
Taking
\begin{align*}
& \alpha_4 = \frac{2\|\Sigma\|\log(Nn)}{c_0N\left(m\|\Sigma\| + (n-m)\sigma^2/\|\Sigma\|\right)}, \text{ and }\\
&t_4 = \frac{m\|\Sigma\| + (n-m)\sigma^2/\|\Sigma\|}{n} \max\{\alpha_4, \sqrt{\alpha_4}\}, 
\end{align*}
we have $\frac{1}{n}\|\tr(J_2)\| \le t_4$ with probability at least $1 - \exp\left(-2\log(Nn)\right) = 1 - \frac{1}{(Nn)^2}$. 
\end{proof}

\putbib[ref.bib]
\end{bibunit}


\begin{thebibliography}{}

\bibitem[Anandkumar et~al., 2014]{anandkumar2014guaranteed}
Anandkumar, A., Ge, R., and Janzamin, M. (2014).
\newblock Guaranteed non-orthogonal tensor decomposition via alternating rank-$1 $ updates.
\newblock {\em ArXiv preprint arXiv:1402.5180}.

\bibitem[Anderson and Rubin, 1956]{anderson1956statistical}
Anderson, T.~W. and Rubin, H. (1956).
\newblock Statistical inference in factor analysis.
\newblock In {\em Proceedings of the Third Berkeley Symposium on Mathematical Statistics and Probability}, volume~1, pages 111--150, Berkeley, CA. University of California Press.

\bibitem[Arous et~al., 2019]{arous2019landscape}
Arous, G.~B., Mei, S., Montanari, A., and Nica, M. (2019).
\newblock The landscape of the spiked tensor model.
\newblock {\em Communications on Pure and Applied Mathematics}, 72(11):2282--2330.

\bibitem[Bi et~al., 2018]{bi2018multilayer}
Bi, X., Qu, A., and Shen, X. (2018).
\newblock Multilayer tensor fatorization with applications to recommender systems.
\newblock {\em The Annals of Statistics}, 46(6B):3308--3333.

\bibitem[Bishop, 2006]{bishop2006pattern}
Bishop, C.~M. (2006).
\newblock Pattern recognition and machine learning.
\newblock {\em Springer Google Scholar}, 2:1122--1128.

\bibitem[Cai et~al., 2021]{JMLR:v22:21-0287}
Cai, H., Hamm, K., Huang, L., and Needell, D. (2021).
\newblock Mode-wise tensor decompositions: Multi-dimensional generalizations of {CUR} decompositions.
\newblock {\em Journal of Machine Learning Research}, 22(185):1--36.

\bibitem[Carroll and Chang, 1970]{carroll1970analysis}
Carroll, J.\, D. and Chang, J.-J. (1970).
\newblock Analysis of individual differences in multidimensional scaling via an n-way generalization of “{E}ckart–{Y}oung” decomposition.
\newblock {\em Psychometrika}, 35(3):283--319.

\bibitem[Chattopadhyay et~al., 2024]{chattopadhyay2024blessing}
Chattopadhyay, S., Zhang, A.~R., and Dunson, D.~B. (2024).
\newblock Blessing of dimension in {B}ayesian inference on covariance matrices.
\newblock {\em ArXiv preprint arXiv:2404.03805}.

\bibitem[Chen et~al., 2025]{chen2025distributed}
Chen, E., Chen, X., Jing, W., and Zhang, Y. (2025).
\newblock Distributed tensor principal component analysis with data heterogeneity.
\newblock {\em Journal of the American Statistical Association}, (just-accepted):1--23.

\bibitem[Chen et~al., 2024]{chen2024semi}
Chen, E.~Y., Xia, D., Cai, C., and Fan, J. (2024).
\newblock Semi-parametric tensor factor analysis by iteratively projected singular value decomposition.
\newblock {\em Journal of the Royal Statistical Society Series B: Statistical Methodology}, 86(3):793--823.

\bibitem[Chu and Ghahramani, 2009]{chu2009ptucker}
Chu, W. and Ghahramani, Z. (2009).
\newblock Probabilistic models for incomplete multi-dimensional arrays.
\newblock In van Dyk, D. and Welling, M., editors, {\em Proceedings of the 12th International Conference on Artificial Intelligence and Statistics (AISTATS)}, volume~5 of {\em Proceedings of Machine Learning Research}, pages 89--96.

\bibitem[Cichocki et~al., 2015]{cichocki2015tensor}
Cichocki, A., Mandic, D., De~Lathauwer, L., Zhou, G., Zhao, Q., Caiafa, C., and Phan, H.~A. (2015).
\newblock Tensor decompositions for signal processing applications: From two-way to multiway component analysis.
\newblock {\em IEEE Signal Processing Magazine}, 32(2):145--163.

\bibitem[Comon et~al., 2008]{comon2008symmetric}
Comon, P., Golub, G., Lim, L.-H., and Mourrain, B. (2008).
\newblock Symmetric tensors and symmetric tensor rank.
\newblock {\em SIAM Journal on Matrix Analysis and Applications}, 30(3):1254--1279.

\bibitem[Dawid, 1981]{dawid1981tensornormal}
Dawid, A.~P. (1981).
\newblock Some matrix‐variate distribution theory: Notational considerations and a {B}ayesian application.
\newblock {\em Biometrika}, 68(1):265--274.

\bibitem[Eaton, 1983]{eaton1983multivariate}
Eaton, M.~L. (1983).
\newblock {\em Multivariate statistics: a vector space approach}, volume 512.
\newblock Wiley New York.

\bibitem[Garber and Kaplan, 2025]{wang2023exponentiated}
Garber, D. and Kaplan, A. (2025).
\newblock Low-rank mirror-prox for nonsmooth and low-rank matrix optimization problems.
\newblock {\em ArXiv preprint arXiv:2206.11523}.
\newblock Contains matrix–exponentiated-gradient updates; v2 posted 9 Apr 2025.

\bibitem[Ghysels et~al., 2024]{ghysels2024tensor}
Ghysels, E., Babii, A., and Pan, J. (2024).
\newblock Tensor {PCA} for factor models.
\newblock {\em Available at SSRN 4791809}.

\bibitem[Greenacre et~al., 2022]{greenacre2022principal}
Greenacre, M., Groenen, P.~J., Hastie, T., d’Enza, A.~I., Markos, A., and Tuzhilina, E. (2022).
\newblock Principal component analysis.
\newblock {\em Nature Reviews Methods Primers}, 2(1):100.

\bibitem[Hackbusch, 2012]{hackbusch2012tensor}
Hackbusch, W. (2012).
\newblock {\em Tensor spaces and numerical tensor calculus}, volume~42.
\newblock Springer.

\bibitem[Han and Zhang, 2022]{han2022tensor}
Han, Y. and Zhang, C.-H. (2022).
\newblock Tensor principal component analysis in high dimensional {CP} models.
\newblock {\em IEEE Transactions on Information Theory}, 69(2):1147--1167.

\bibitem[Hoff et~al., 2023]{hoff2023core}
Hoff, P., McCormack, A., and Zhang, A.~R. (2023).
\newblock Core shrinkage covariance estimation for matrix-variate data.
\newblock {\em Journal of the Royal Statistical Society Series B: Statistical Methodology}, 85(5):1659--1679.

\bibitem[Hotelling, 1933]{hotelling1933analysis}
Hotelling, H. (1933).
\newblock Analysis of a complex of statistical variables into principal components.
\newblock {\em Journal of Educational Psychology}, 24(6):417.

\bibitem[Jing et~al., 2021]{jing2021community}
Jing, B.-Y., Li, T., Lyu, Z., and Xia, D. (2021).
\newblock Community detection on mixture multilayer networks via regularized tensor decomposition.
\newblock {\em The Annals of Statistics}, 49(6):3181--3205.

\bibitem[Kolda and Bader, 2009]{kolda2009tensor}
Kolda, T.~G. and Bader, B.~W. (2009).
\newblock Tensor decompositions and applications.
\newblock {\em SIAM Review}, 51(3):455--500.

\bibitem[Kolda et~al., 2019]{kolda2019bayesiantucker}
Kolda, T.~G. et~al. (2019).
\newblock Bayesian tensor decompositions and their applications.
\newblock {\em Pattern Recognition Letters}, 120:118--128.

\bibitem[Kressner et~al., 2014]{kressner2014low}
Kressner, D., Steinlechner, M., and Vandereycken, B. (2014).
\newblock Low-rank tensor completion by {R}iemannian optimization.
\newblock {\em BIT Numerical Mathematics}, 54:447--468.

\bibitem[Landsberg, 2011]{landsberg2011tensors}
Landsberg, J.~M. (2011).
\newblock {\em Tensors: geometry and applications}, volume 128.
\newblock American Mathematical Soc.

\bibitem[Liu et~al., 2022]{liu2022tensor}
Liu, Y., Liu, J., Long, Z., and Zhu, C. (2022).
\newblock {\em Tensor computation for data analysis}.
\newblock Springer.

\bibitem[Lu et~al., 2008]{lu2008mpca}
Lu, H., Plataniotis, K.~N., and Venetsanopoulos, A.~N. (2008).
\newblock Mpca: Multilinear principal component analysis of tensor objects.
\newblock {\em IEEE transactions on Neural Networks}, 19(1):18--39.

\bibitem[Lyu et~al., 2019]{lyu2019tensor}
Lyu, X., Sun, W.~W., Wang, Z., Liu, H., Yang, J., and Cheng, G. (2019).
\newblock Tensor graphical model: Non-convex optimization and statistical inference.
\newblock {\em IEEE Transactions on Pattern Analysis and Machine Intelligence}, 42(8):2024--2037.

\bibitem[Mai et~al., 2022]{mai2022doubly}
Mai, Q., Zhang, X., Pan, Y., and Deng, K. (2022).
\newblock A doubly enhanced {EM} algorithm for model-based tensor clustering.
\newblock {\em Journal of the American Statistical Association}, 117(540):2120--2134.

\bibitem[Matsuda and Strawderman, 2022]{matsuda2022estimation}
Matsuda, T. and Strawderman, W.~E. (2022).
\newblock Estimation under matrix quadratic loss and matrix superharmonicity.
\newblock {\em Biometrika}, 109(2):503--519.

\bibitem[Montanari and Richard, 2014]{montanari2014statistical}
Montanari, A. and Richard, E. (2014).
\newblock A statistical model for tensor {PCA}.
\newblock {\em Advances in Neural Information Processing Systems}, 27.

\bibitem[Oseledets, 2011]{oseledets2011tensor}
Oseledets, I.~V. (2011).
\newblock Tensor-train decomposition.
\newblock {\em SIAM Journal on Scientific Computing}, 33(5):2295--2317.

\bibitem[Ouyang and Yuan, 2025]{ouyang2023multiway}
Ouyang, J. and Yuan, M. (2025).
\newblock {On the multiway principal component analysis}.
\newblock {\em The Annals of Statistics}, 53(4):1431 -- 1456.

\bibitem[Panagakis et~al., 2021]{panagakis2021tensor}
Panagakis, Y., Kossaifi, J., Chrysos, G.~G., Oldfield, J., Nicolaou, M.~A., Anandkumar, A., and Zafeiriou, S. (2021).
\newblock Tensor methods in computer vision and deep learning.
\newblock {\em Proceedings of the IEEE}, 109(5):863--890.

\bibitem[Pearson, 1901]{pearson1901liii}
Pearson, K. (1901).
\newblock {LIII}. {O}n lines and planes of closest fit to systems of points in space.
\newblock {\em The London, Edinburgh, and Dublin Philosophical Magazine and Journal of Science}, 2(11):559--572.

\bibitem[Ren et~al., 2024]{ren2024transfer}
Ren, M., Zhen, Y., and Wang, J. (2024).
\newblock Transfer learning for tensor {G}aussian graphical models.
\newblock {\em Journal of Machine Learning Research}, 25(396):1--40.

\bibitem[Rubin and Thayer, 1982]{rubin1982em}
Rubin, D.~B. and Thayer, D.~T. (1982).
\newblock {EM} algorithms for {ML} factor analysis.
\newblock {\em Psychometrika}, 47(1):69--76.

\bibitem[Th{\'o}r{\'d}arson et~al., 2021]{hannani2019EMtensor}
Th{\'o}r{\'d}arson, A.~F., Baum, A., Garc{\'\i}a, M., Vicente-Serrano, S.~M., and Stockmarr, A. (2021).
\newblock Gap-filling of {NDVI} satellite data using {T}ucker decomposition: Exploiting spatio-temporal patterns.
\newblock {\em Remote Sensing}, 13(19):4007.

\bibitem[Tipping and Bishop, 1999]{tipping1999probabilistic}
Tipping, M.~E. and Bishop, C.~M. (1999).
\newblock Probabilistic principal component analysis.
\newblock {\em Journal of the Royal Statistical Society Series B: Statistical Methodology}, 61(3):611--622.

\bibitem[Tucker, 1966]{tucker1966some}
Tucker, L.~R. (1966).
\newblock Some mathematical notes on three-mode factor analysis.
\newblock {\em Psychometrika}, 31(3):279--311.

\bibitem[Wainwright, 2019]{Wainwright_2019}
Wainwright, M.~J. (2019).
\newblock {\em High-Dimensional Statistics: A Non-Asymptotic Viewpoint}.
\newblock Cambridge Series in Statistical and Probabilistic Mathematics. Cambridge University Press.

\bibitem[Wu and Zhou, 2024]{wu2024sharp}
Wu, Y. and Zhou, K. (2024).
\newblock Sharp analysis of power iteration for tensor {PCA}.
\newblock {\em Journal of Machine Learning Research}, 25(195):1--42.

\bibitem[Zhang et~al., 2021]{zhang2021dynamic}
Zhang, Y., Bi, X., Tang, N., and Qu, A. (2021).
\newblock Dynamic tensor recommender systems.
\newblock {\em Journal of Machine Learning Research}, 22.

\bibitem[Zhao et~al., 2015]{zhao2015bayesian}
Zhao, Q., Zhou, G., Zhang, L., Cichocki, A., and Amari, S.-I. (2015).
\newblock Bayesian robust tensor factorization for incomplete multiway data.
\newblock {\em IEEE Transactions on Neural Networks and Learning Systems}, 27(4):736--748.

\bibitem[Zhou et~al., 2013]{zhou2013tensor}
Zhou, H., Li, L., and Zhu, H. (2013).
\newblock Tensor regression with applications in neuroimaging data analysis.
\newblock {\em Journal of the American Statistical Association}, 108(502):540--552.

\bibitem[Zhou et~al., 2023]{zhou2023partially}
Zhou, J., Sun, W.~W., Zhang, J., and Li, L. (2023).
\newblock Partially observed dynamic tensor response regression.
\newblock {\em Journal of the American Statistical Association}, 118(541):424--439.

\end{thebibliography}


\begin{thebibliography}{}

\bibitem[Ge et~al., 2017]{ge2017no}
Ge, R., Jin, C., and Zheng, Y. (2017).
\newblock No spurious local minima in nonconvex low rank problems: A unified geometric analysis.
\newblock In {\em International Conference on Machine Learning}, pages 1233--1242. PMLR.

\bibitem[Golub and Van~Loan, 2013]{golub2013matrix}
Golub, G.~H. and Van~Loan, C.~F. (2013).
\newblock {\em Matrix computations}.
\newblock JHU Press.

\bibitem[Moshksar, 2021]{moshksar2021absolute}
Moshksar, K. (2021).
\newblock On the absolute constant in {H}anson-{W}right inequality.
\newblock {\em ArXiv Preprint arXiv:2111.00557}.

\bibitem[Moshksar, 2024]{moshksar2024refining}
Moshksar, K. (2024).
\newblock Refining concentration for {G}aussian quadratic chaos.
\newblock {\em ArXiv Preprint arXiv:2412.03774}.

\bibitem[Wainwright, 2019]{Wainwright_2019}
Wainwright, M.~J. (2019).
\newblock {\em High-Dimensional Statistics: A Non-Asymptotic Viewpoint}.
\newblock Cambridge Series in Statistical and Probabilistic Mathematics. Cambridge University Press.

\end{thebibliography}
\end{document}